\newcommand{\kk}{\mathbb{k}}
\newcommand{\CC}{\mathbb{C}}
\newcommand{\NN}{\normalfont\mathbb{N}}
\newcommand{\ZZ}{\normalfont\mathbb{Z}}
\newcommand{\PP}{{\normalfont\mathbb{P}}}
\newcommand{\ara}{{\normalfont\text{ara}}}
\newcommand{\mm}{{\normalfont\mathfrak{m}}}
\newcommand{\QQ}{\mathbb{Q}}
\newcommand{\pp}{{\normalfont\mathfrak{p}}}
\newcommand{\aaa}{\mathfrak{a}}
\newcommand{\bbb}{\mathfrak{b}}
\newcommand{\nn}{{\normalfont\mathfrak{N}}}
\newcommand{\bn}{{\normalfont\mathbf{n}}}
\newcommand{\bx}{{\normalfont\mathbf{x}}}
\newcommand{\bm}{{\normalfont\mathbf{m}}}
\newcommand{\ttt}{{\normalfont\mathbf{t}}}
\newcommand{\fP}{{\mathfrak{P}}}
\newcommand{\fJ}{{\mathfrak{J}}}
\newcommand{\fL}{{\mathfrak{L}}}
\newcommand{\fK}{{\mathfrak{K}}}
\newcommand{\fS}{{\mathfrak{S}}}
\newcommand{\fT}{{\mathfrak{T}}}
\newcommand{\rank}{\normalfont\text{rank}}
\newcommand{\grade}{\normalfont\text{grade}}
\newcommand{\codim}{\normalfont\text{codim}}
\newcommand{\sdim}{{\normalfont\text{sdim}}}
\newcommand{\Ker}{\normalfont\text{Ker}}
\newcommand{\Quot}{\normalfont\text{Quot}}
\newcommand{\HT}{\normalfont\text{ht}}
\newcommand{\Ann}{\normalfont\text{Ann}}
\newcommand{\Supp}{\normalfont\text{Supp}}
\newcommand{\Min}{{\normalfont\text{Min}}}
\newcommand{\Sym}{\normalfont\text{Sym}}
\newcommand{\Rees}{\mathcal{R}}
\newcommand{\ee}{{\normalfont\mathbf{e}}}
\newcommand{\OO}{\mathcal{O}}
\newcommand{\mO}{\mathscr{O}}
\newcommand{\LL}{\mathbb{L}}
\newcommand{\FF}{\normalfont\mathcal{F}}
\newcommand{\gr}{{\normalfont\text{gr}}}
\newcommand{\Proj}{\normalfont\text{Proj}}
\newcommand{\Hilb}{{\normalfont\text{Hilb}}}
\newcommand{\Spec}{\normalfont\text{Spec}}
\newcommand{\multProj}{\normalfont\text{MultiProj}}
\newcommand{\msupp}{\normalfont\text{MSupp}}
\def\f0{\mathbf{0}}
\def\fone{\mathbf{1}}
\def\fs{\mathbf{s}}
\def\fn{\mathbf{n}}
\def\ls{\leqslant}
\def\gs{\geqslant}
\newtheorem{theorem}{Theorem}[section]
\newtheorem{headthm}{Theorem}
\newaliascnt{headcor}{headthm}
\newaliascnt{headthmdef}{headthm}
\newaliascnt{headconj}{headthm}
\newaliascnt{corollary}{theorem}
\newtheorem{corollary}[corollary]{Corollary}
\newaliascnt{lemma}{theorem}
\newtheorem{lemma}[lemma]{Lemma}
\newaliascnt{conjecture}{theorem}
\newaliascnt{proposition}{theorem}
\newtheorem{proposition}[proposition]{Proposition}
\theoremstyle{definition}
\newaliascnt{definition}{theorem}
\newtheorem{definition}[definition]{Definition}
\newaliascnt{notation}{theorem}
\newtheorem{notation}[notation]{Notation}
\newaliascnt{example}{theorem}
\newtheorem{example}[example]{Example}
\newaliascnt{examples}{theorem}
\newaliascnt{remark}{theorem}
\newtheorem{remark}[remark]{Remark}
\newaliascnt{problem}{theorem}
\newaliascnt{question}{theorem}
\newtheorem{question}[question]{Question}
\newaliascnt{convention}{theorem}
\newaliascnt{construction}{theorem}
\newaliascnt{setup}{theorem}
\newtheorem{setup}[setup]{Setup}
\newaliascnt{algorithm}{theorem}
\newaliascnt{observation}{theorem}
\newaliascnt{defprop}{theorem}
\def\equationautorefname~#1\null{(#1)\null}
\def\sectionautorefname~#1\null{Section #1\null}
\def\subsectionautorefname~#1\null{\S #1\null}
\def\trdeg{{\rm trdeg}}
\begin{document}

\title{When are multidegrees positive?}

\author[Federico Castillo]{Federico Castillo}
\address[Castillo]{Max Planck Institute for Mathematics in the Sciences, Inselstra\ss e 22, 04103 Leipzig, Germany}
\email{Federico.Castillo@mis.mpg.de}

\author[Yairon Cid-Ruiz]{Yairon Cid-Ruiz}
\address[Cid-Ruiz]{Max Planck Institute for Mathematics in the Sciences, Inselstra\ss e 22, 04103 Leipzig, Germany}
\email{cidruiz@mis.mpg.de}
\urladdr{https://ycid.github.io}

\author[Binglin Li]{Binglin Li}
\address[Li]{Department of Statistics, 310 Herty Drive, University of Georgia, Athens, GA 30602, USA}
\email{binglinligeometry@uga.edu
}

\author[Jonathan Monta\~no]{Jonathan Monta\~no$^{*}$}
\address[Monta\~no ]{Department of Mathematical Sciences  \\ New Mexico State University  \\PO Box 30001\\Las Cruces, NM 88003-8001}
\thanks{$^{*}$ The fourth author is  supported by NSF Grant DMS \#2001645.}
\email{jmon@nmsu.edu}

\author[Naizhen Zhang]{Naizhen Zhang}
\address[Zhang]{Institut f\"ur Differentialgeometrie, Leibniz Universit\"at Hannover, Welfengarten 1, 30167 Hannover, Germany}
\email{naizhen.zhang@math.uni-hannover.de}

\date{\today}
\keywords{positivity, multidegrees, mixed multiplicities, multiprojective scheme, projections, polymatroids, Hilbert polynomial}
\subjclass[2010]{Primary 14C17, 13H15; Secondary 52B40, 13A30.}

\begin{abstract}
	Let $\kk$ be an arbitrary field, $\PP = \PP_\kk^{m_1} \times_\kk \cdots \times_\kk \PP_\kk^{m_p}$ be a multiprojective space over $\kk$, and $X \subseteq \PP$ be a closed subscheme of $\PP$.
	We provide necessary and sufficient conditions for the positivity of the multidegrees of $X$.  
	As a consequence of our methods, we show that when $X$ is irreducible, the support of multidegrees forms a discrete algebraic polymatroid. 
	In  algebraic terms, we characterize the positivity of the mixed multiplicities of  a standard multigraded algebra over an Artinian local ring, and we apply this  to the positivity of  mixed multiplicities of ideals. 
	Furthermore, we  use our results to recover several results in the literature in the context of combinatorial algebraic geometry. 
\end{abstract}

\maketitle

\section{Introduction}

Let $\kk$ be an arbitrary field, $\PP = \PP_\kk^{m_1} \times_\kk \cdots \times_\kk \PP_\kk^{m_p}$ be a multiprojective space over $\kk$, and $X \subseteq \PP$ be a closed subscheme of $\PP$.
The \emph{multidegrees} of $X$ are fundamental invariants that describe 
algebraic and geometric properties of $X$.
For each $\bn=(n_1,\ldots,n_p) \in \NN^p$ with $n_1+\cdots+n_p=\dim(X)$  one can define the \textit{multidegree of $X$ of type $\bn$ with respect to $\PP$}, denoted by $\deg_\PP^\bn(X)$, in different ways (see \autoref{def_multdeg}, \autoref{rem_chow_ring} and \autoref{rem_Hilb_series}). In classical geometrical terms, when $\kk$ is algebraically closed, $\deg_\PP^\bn(X)$ equals the number of points (counting multiplicity) in the intersection of $X$ with the product $L_1 \times_\kk \cdots \times_\kk L_p  \subset \PP$, where $L_i \subset \PP_\kk^{m_i}$ is a general linear subspace of dimension $m_i-n_i$ for each $1 \le i \le p$.

\medskip

The study of multidegrees goes back to pioneering work by van~der~Waerden \cite{VAN_DER_WAERDEN}.
From a more algebraic point of view, multidegrees receive the name of \emph{mixed multiplicities} (see \autoref{def_multdeg}). 
More recent papers where the notion of multidegree (or mixed multiplicity) is studied are, e.g.,  \cite{Bhattacharya,VERMA_BIGRAD,HERMANN_MULTIGRAD,TRUNG_POSITIVE,MIXED_MULT, EXPONENTIAL_VARIETIES,KNUTSON_MILLER, michalek2020maximum, CONCA_DENEGRI_GORLA}. 

\medskip

The main goal of this paper is to answer the following fundamental question considered by Trung \cite{TRUNG_POSITIVE} and by Huh \cite{Huh12} in the case $p=2$.
\begin{itemize}
	\item \emph{For $\bn \in \NN^p$ with $n_1+\cdots+n_p=\dim(X)$, when do we have that $\deg_\PP^\bn(X) >0$?}
\end{itemize}
Our main result says that the positivity of $\deg_\PP^\bn(X)$ is determined by the dimensions of the images of the natural projections from $\PP$ restricted to the irreducible components of $X$.
First, we set a basic notation: for each $\fJ = \{j_1,\ldots,j_k\} \subseteq \{1,\ldots,p\}$, let $\Pi_\fJ$ be the natural projection 
$$
\Pi_\fJ: \PP = \PP_\kk^{m_1} \times_\kk \cdots \times_\kk \PP_\kk^{m_p} \;\rightarrow\; \PP_\kk^{m_{j_1}} \times_\kk \cdots \times_\kk \PP_\kk^{m_{j_k}}.
$$
The following is the main theorem of this article. Here, we give necessary and sufficient conditions for the positivity of multidegrees.

\begin{headthm}[\autoref{thm_main_irreducible}, \autoref{thm_main}]
	\label{thmA}
	Let $\kk$ be an arbitrary field, $\PP = \PP_\kk^{m_1} \times_\kk \cdots \times_\kk \PP_\kk^{m_p}$ be a multiprojective space over $\kk$, and $X \subseteq \PP$ be a closed subscheme of $\PP$.
	Let $\bn=(n_1,\ldots,n_p) \in \NN^p$ be such that $n_1+\cdots+n_p=\dim(X)$.
	Then, $\deg_\PP^\bn(X) > 0$ if and only if there is an irreducible component $Y \subseteq X$ of $X$ that satisfies the following two conditions:
	\begin{enumerate}[\rm (a)]
		\item $\dim(Y) = \dim(X)$.
		\item For each $\fJ = \{j_1,\ldots,j_k\} \subseteq \{1,\ldots,p\}$ the inequality 
		$$
		n_{j_1} + \cdots + n_{j_k} \le \dim\big(\Pi_\fJ(Y)\big)
		$$
		holds.
	\end{enumerate} 
\end{headthm}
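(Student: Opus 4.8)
\emph{The plan.} First I would reduce to the case where $X$ is irreducible and $\kk$ is algebraically closed. Multidegrees, the dimensions $\dim\Pi_\fJ(-)$, and the top-dimensional part of a subscheme are all unchanged under the base change $\kk\hookrightarrow\overline{\kk}$ — the absolute Galois group permutes the $\overline{\kk}$-components lying over a fixed $\kk$-component while fixing every projection $\Pi_\fJ$, hence preserving all dimensions involved — so conditions (a)--(b) hold for some component over $\kk$ precisely when the analogous statement holds over $\overline{\kk}$, and we may assume $\kk=\overline{\kk}$. Next, writing $X=Y_1\cup\cdots\cup Y_s$ for the irreducible components, recall that since $n_1+\cdots+n_p=\dim X$ the multidegree of type $\bn$ only sees the top-dimensional part and is additive there with positive multiplicities, $\deg_\PP^\bn(X)=\sum_{\dim Y_t=\dim X}\ell_t\deg_\PP^\bn(Y_t)$ with $\ell_t>0$. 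Hence $\deg_\PP^\bn(X)>0$ iff some top-dimensional $Y_t$ has $\deg_\PP^\bn(Y_t)>0$, and by definition the existence of a component satisfying (a) and (b) is the existence of a top-dimensional $Y_t$ satisfying (b); so the theorem reduces to the case $X=Y$ irreducible, which I assume from now on, writing $d:=\dim Y$ and $d_Y(\fJ):=\dim\Pi_\fJ(Y)$.

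\emph{Necessity.} Suppose some $\fJ=\{j_1,\dots,j_k\}$ violates (b). For general linear subspaces $L_i\subseteq\PP_\kk^{m_i}$ with $\dim L_i=m_i-n_i$, the intersection $Y\cap(L_1\times_\kk\cdots\times_\kk L_p)$ is finite of length $\deg_\PP^\bn(Y)$ (this is the geometric description of the multidegree), and $\Pi_\fJ$ sends it into $\Pi_\fJ(Y)\cap(L_{j_1}\times_\kk\cdots\times_\kk L_{j_k})$. The latter is empty: intersecting $\Pi_\fJ(Y)$ with the $L_{j_\ell}$ one factor at a time, each cut either lowers the dimension by its codimension or produces the empty set, and since the total codimension $n_{j_1}+\cdots+n_{j_k}$ exceeds $d_Y(\fJ)$ the latter must occur. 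Hence $Y\cap(L_1\times_\kk\cdots\times_\kk L_p)=\emptyset$ and $\deg_\PP^\bn(Y)=0$.

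\emph{Sufficiency.} Assuming (b), I would prove $\deg_\PP^\bn(Y)>0$ by induction on $d$. If $d=0$ then $\bn=\mathbf{0}$ and $\deg_\PP^{\mathbf{0}}(Y)=\operatorname{length}(Y)>0$; if $d=1$, choose $i$ with $n_i=1$, note that (b) for $\{i\}$ gives $d_Y(\{i\})\ge 1$, so $\Pi_{\{i\}}(Y)$ is a curve and meets every hyperplane, whence $Y$ meets the pullback to $\PP$ of a general hyperplane of $\PP_\kk^{m_i}$ and $\deg_\PP^\bn(Y)>0$. For $d\ge 2$, pick $i$ with $n_i\ge1$; by (b) for $\{i\}$, $d_Y(\{i\})\ge n_i\ge1$, so for the pullback $H\subseteq\PP$ of a general hyperplane of $\PP_\kk^{m_i}$ we have $Y\not\subseteq H$, the scheme $Y\cap H$ is pure of dimension $d-1$, and writing a general $L_i$ as $H$ cut by a general subspace of dimension $m_i-n_i+1$ gives $\deg_\PP^\bn(Y)=\deg_\PP^{\bn-\mathbf{e}_i}(Y\cap H)$. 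By additivity and the inductive hypothesis it then suffices to find an irreducible component $Y''$ of $Y\cap H$, of dimension $d-1$, satisfying (b) relative to $\bn-\mathbf{e}_i$. I would deduce this from the key claim that, for general $H$, some component $Y''$ of $Y\cap H$ satisfies $d_{Y''}(\fJ)\ge d_Y(\fJ)-1$ for all $\fJ$, and $d_{Y''}(\fJ)\ge d_Y(\fJ)$ whenever $i\notin\fJ$ and $d_Y(\fJ\cup\{i\})>d_Y(\fJ)$: indeed for $i\in\fJ$, $d_{Y''}(\fJ)\ge d_Y(\fJ)-1\ge\sum_{j\in\fJ}n_j-1$; for $i\notin\fJ$ with $d_Y(\fJ\cup\{i\})>d_Y(\fJ)$, $d_{Y''}(\fJ)\ge d_Y(\fJ)\ge\sum_{j\in\fJ}n_j$; and for $i\notin\fJ$ with $d_Y(\fJ\cup\{i\})=d_Y(\fJ)$, applying (b) for $Y$ to $\fJ\cup\{i\}$ gives $d_Y(\fJ)=d_Y(\fJ\cup\{i\})\ge\sum_{j\in\fJ}n_j+n_i$, so $d_{Y''}(\fJ)\ge d_Y(\fJ)-1\ge\sum_{j\in\fJ}n_j$ since $n_i\ge1$ — which is exactly (b) for $Y''$ relative to $\bn-\mathbf{e}_i$. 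This closes the induction.

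\emph{The main obstacle.} The hard part is the key claim about a single component of the possibly reducible hyperplane section $Y\cap H$ carrying all the required lower bounds at once. One shows $d_{Y''}(\fJ)=d_Y(\fJ)-1$ for $i\in\fJ$ via $\Pi_\fJ(Y\cap H)=\Pi_\fJ(Y)\cap H$ (the hyperplane only constrains the $i$-th block) together with $d_Y(\{i\})\ge1$, and the dichotomy for $i\notin\fJ$ by analysing the generic fibre of $\Pi_{\fJ\cup\{i\}}(Y)\to\Pi_\fJ(Y)$ (of dimension $d_Y(\fJ\cup\{i\})-d_Y(\fJ)$), which survives the cut precisely when positive-dimensional; the subtle point — isolating one component that works uniformly in $\fJ$ — I would handle via the irreducibility of a general hyperplane section where a Bertini-type theorem over the infinite field $\kk$ applies, and by a direct analysis of the remaining low-dimensional cases. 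En route, the same fibre-dimension bookkeeping shows $\fJ\mapsto d_Y(\fJ)$ is submodular (compare $\Pi_{\fJ_1\cup\fJ_2}(Y)$ with $\Pi_{\fJ_1}(Y)\times_{\Pi_{\fJ_1\cap\fJ_2}(Y)}\Pi_{\fJ_2}(Y)$), so the $\bn$ satisfying (a)--(b) for irreducible $Y$ are exactly the bases of the discrete polymatroid with rank function $d_Y$, yielding the polymatroid statement announced in the abstract.
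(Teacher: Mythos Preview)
Your strategy mirrors the paper's: reduce to the irreducible case via the associativity formula, then induct on $\dim Y$ by cutting with a hyperplane in a factor where $n_i\ge 1$ and controlling all projection dimensions of the section. You also correctly isolate the key claim --- essentially the paper's \autoref{thm_projections}, that $\dim\Pi_\fJ(Y\cap H)=\min\{d_Y(\fJ),\,d_Y(\fJ\cup\{i\})-1\}$. Two steps in your sketch, however, are genuine gaps.

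First, in the case $i\notin\fJ$ with $d_Y(\fJ\cup\{i\})=d_Y(\fJ)$, your generic-fibre heuristic does not yield the bound $\dim\Pi_\fJ(Y\cap H)\ge d_Y(\fJ)-1$ that you use: the generic fibre of $\Pi_{\fJ\cup\{i\}}(Y)\to\Pi_\fJ(Y)$ is zero-dimensional, so ``surviving the cut'' says nothing here, and it is not a priori excluded that $\Pi_{\fJ\cup\{i\}}(Y)\cap H$ gets contracted by the projection to $\Pi_\fJ(Y)$. The paper establishes exactly this inequality via Grothendieck's Connectedness Theorem (their \autoref{lem_dimension_connect}), applied to bound $\dim\bigl(R/((x:_R\mm_1^\infty)+\mm_1)\bigr)$ from below; this is a substantive ingredient absent from your outline.

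Second, your plan to ``isolate one component that works uniformly in $\fJ$'' via an ordinary Bertini irreducibility theorem does not go through: Bertini requires the relevant morphism to have image of dimension $\ge 2$ (and, over non-closed fields, geometric irreducibility of $Y$), while your hypothesis only gives $d_Y(\{i\})\ge n_i\ge 1$, and your appeal to a ``direct analysis of the remaining low-dimensional cases'' is precisely where the difficulty lies. The paper circumvents this by passing to the purely transcendental extension $\LL=\kk(z_0,\ldots,z_s)$ and cutting by the \emph{generic} element $z=\sum_j z_jx_j\in[R_\LL]_{\ee_i}$. A Bertini-type result of Flenner--O'Carroll--Vogel then guarantees that $W=X_\LL\cap V(z)$ is irreducible with no dimension hypothesis, and a generic-freeness argument (their \autoref{lem_generic}) shows the Hilbert function of $R_\LL/zR_\LL$ equals that of $R/xR$ for general $x$, so the multidegrees match and the induction continues with the irreducible $W$ rather than with a component of $Y\cap H$. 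The same device is why the paper avoids your reduction to $\overline\kk$: purely transcendental extensions preserve the domain property (\autoref{rem_infinite_field}), so geometric irreducibility is never needed.
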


When $\kk$ is the field of complex numbers \autoref{thmA} is essentially covered by the geometric results in \cite[Theorems 2.14, 2.19]{KAVEH_KHOVANSKII},\footnote{In \autoref{rem_Kaveh_Kho} we briefly discuss how (over the complex numbers) \autoref{thmA} can be obtained by using the results in \cite[\S 2.2]{KAVEH_KHOVANSKII}.}
 however their methods do not extend to arbitrary fields. Here we follow an algebraic approach that allows us to prove the result for all fields, and hence a general version for algebras over Artinian local rings (see \autoref{corB}). 
The main idea in the proof of \autoref{thmA} is the study of the dimensions of the images of the natural projections after cutting by a general hyperplane (see \autoref{thm_projections}). 

\medskip

We note  that if $p=2$ and $X$ is arithmetically Cohen-Macaulay,  the conclusion of \autoref{thmA} in the irreducible case also holds for $X$ (see \cite[Corollary 2.8]{TRUNG_POSITIVE}). 
In \autoref{ex_cm} we show that this is not necessarily true for $p>2$.

\medskip
If $X$ is irreducible, then the function $r:2^{\{1,\ldots,p\}}\rightarrow \ZZ$ defined by $r(\fJ):=\dim\big(\Pi_\fJ(Y)\big)$ is a submodular function, i.e., $r(\fJ_1\cap \fJ_2)+r(\fJ_1\cup \fJ_2)\leq r(\fJ_1)+r(\fJ_2)$ for any two subsets $\fJ_1,\fJ_2\subseteq \{1,\ldots,p\}$, as proved in \autoref{prop_poly} (see also \autoref{def_algebraic_polymatroid}). 
By the Submodular Theorem (see, e.g., \cite[Theorem 3.11]{NESTED} or \cite[Appendix B]{YU})  and the inequalities of \autoref{thmA}, the points $\bn\in \NN^p$ for which $\deg_\PP^\bn(X) > 0$ are the lattice points of a {\it generalized permutohedron}. 
Defined by A.~Postnikov in \cite{POSTNIKOV} generalized permutohedra are polytopes obtained by deforming usual permutohedra. 
In recent years this family of polytopes has been studied in relation to other fields such as probability, combinatorics, and  representation theory (see \cite{YU,YONG,FACES}).

\medskip

In a more algebraic flavor, we state the translation of \autoref{thmA} to the mixed multiplicities  of a standard multigraded algebra over an Artinian local ring (see \autoref{def_mixed_mult}). 

\begin{headthm}[\autoref{cor_positive_mixed_mult}]
	\label{corB}
	Let $A$ be an Artinian local ring and $R$ be a finitely generated standard $\NN^p$-graded $A$-algebra.
	For each $1 \le j \le p$, let  $\mm_j \subset R$ be the ideal generated by the elements of degree $\ee_j$, where $\ee_j \in \NN^p$ denotes the $j$-th elementary vector.
	Let $\nn = \mm_1 \cap \cdots \cap \mm_p \subset R$.
	Let $\bn=(n_1,\ldots,n_p) \in \NN^p$ be such that $n_1+\cdots+n_p=\dim\left(R/\left(0:_R\nn^\infty\right)\right)-p$.
	Then, $e(\bn;R) > 0$ if and only if there is a minimal prime ideal $\fP \in \Min\left(0:_R\nn^\infty\right)$ of $\left(0:_R\nn^\infty\right)$ that satisfies the following two conditions:
	\begin{enumerate}[\rm (a)]
		\item $\dim\left(R/\fP\right)  = \dim\left(R/\left(0:_R\nn^\infty\right)\right)$.
		\item For each $\fJ = \{j_1,\ldots,j_k\} \subseteq \{1,\ldots,p\}$ the inequality 
		$$
		n_{j_1} + \cdots + n_{j_k} \le \dim\left(\frac{R}{\fP+\sum_{j \not\in \fJ} \mm_j}\right) - k
		$$
		holds. 
	\end{enumerate} 
\end{headthm}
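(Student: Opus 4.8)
The plan is to deduce \autoref{corB} from the geometric statement \autoref{thmA} by means of the standard correspondence between standard multigraded algebras and multiprojective schemes, together with the associativity formula for mixed multiplicities. Throughout write $\nn = \mm_1 \cap \cdots \cap \mm_p$, and note that we may assume $|\bn| = \dim\!\left(R/\left(0:_R\nn^\infty\right)\right) - p \ge 0$, since otherwise there is nothing to prove.

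I would first reduce to the case $0:_R\nn^\infty = 0$. The $R$-submodule $0:_R\nn^\infty$ is killed by a power of $\nn$, hence vanishes in all multidegrees $\dd \in \NN^p$ with $d_1,\dots,d_p \gg 0$; thus $R$ and $R/\left(0:_R\nn^\infty\right)$ have the same multigraded Hilbert polynomial, so $e(\bn;R) = e\!\left(\bn;\,R/\left(0:_R\nn^\infty\right)\right)$ by \autoref{def_mixed_mult}. Moreover every $\fP \in \Min\!\left(0:_R\nn^\infty\right)$ contains $0:_R\nn^\infty$, so neither $\Min\!\left(0:_R\nn^\infty\right)$ nor any of the dimensions $\dim\!\left(R/\left(0:_R\nn^\infty\right)\right)$, $\dim(R/\fP)$, $\dim\!\left(R/\left(\fP+\sum_{j\notin\fJ}\mm_j\right)\right)$ appearing in the statement changes upon replacing $R$ by $R/\left(0:_R\nn^\infty\right)$. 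After this replacement I may assume $0:_R\nn^\infty = 0$; set $d := \dim R$, so that $|\bn| = d - p$, $\Min\!\left(0:_R\nn^\infty\right) = \Min(R)$, and every $\fP \in \Min(R)$ satisfies $\fP \not\supseteq \mm_j$ for all $j$ (as $\fP \supseteq \mm_j$ for some $j$ would force $\fP \supseteq \nn$).

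Next I would invoke the associativity formula for mixed multiplicities,
\[
e(\bn;R) \;=\; \sum_{\substack{\fP \in \Min(R)\\ \dim(R/\fP)=d}} \operatorname{length}_{R_\fP}\!\left(R_\fP\right)\, e(\bn;\,R/\fP),
\]
in which each $\operatorname{length}_{R_\fP}\!\left(R_\fP\right)$ is a positive integer and each $e(\bn;R/\fP)$ is nonnegative (being a multidegree over a field, cf.\ \autoref{def_multdeg}). Hence $e(\bn;R) > 0$ precisely when $e(\bn;R/\fP) > 0$ for some $\fP \in \Min(R)$ with $\dim(R/\fP) = d$ --- that is, some $\fP$ satisfying condition~(a). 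Fixing such a $\fP$ and setting $D := R/\fP$: since $A$ is Artinian local its only prime is $\mm_A$, so $\fP \cap A = \mm_A$ and $D$ is a finitely generated standard $\NN^p$-graded \emph{domain} over the field $\kk := A/\mm_A$ with $D_{\ee_j} \ne 0$ for all $j$ (because $\fP \not\supseteq \mm_j$). Choosing $\kk$-bases of the $D_{\ee_j}$ presents $D$ as a quotient of the Cox ring of $\PP' := \PP_\kk^{m_1'} \times_\kk \cdots \times_\kk \PP_\kk^{m_p'}$, where $m_j' := \dim_\kk D_{\ee_j} - 1 \ge 0$; thus $Y := \multProj(D) \subseteq \PP'$ is an integral closed subscheme with $\dim Y = \dim D - p = d - p = |\bn|$, and $\deg_{\PP'}^{\bn}(Y) = e(\bn;D)$.

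At this point I would apply \autoref{thmA} to the irreducible scheme $Y$ (which is its own unique top-dimensional component), obtaining that $e(\bn;D) > 0$ if and only if $n_{j_1} + \cdots + n_{j_k} \le \dim\!\left(\Pi_\fJ(Y)\right)$ for every $\fJ = \{j_1,\dots,j_k\} \subseteq \{1,\dots,p\}$. To match this with condition~(b), I would observe that $R/\!\left(\fP + \sum_{j\notin\fJ}\mm_j\right)$ is the quotient of $D$ by the ideal generated by $\bigcup_{j\notin\fJ} D_{\ee_j}$, which by standardness equals $\bigoplus_{\dd} D_\dd$ over the $\dd \in \NN^p$ supported on $\fJ$; this in turn is the subring of the domain $D$ generated by $\bigcup_{j\in\fJ} D_{\ee_j}$, i.e.\ the image in $D$ of the Cox ring of $\prod_{j\in\fJ}\PP_\kk^{m_j'}$, whose $\multProj$ is the image $\Pi_\fJ(Y)$ (closed, since $\Pi_\fJ$ is proper). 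Hence $\dim\!\left(\Pi_\fJ(Y)\right) = \dim\!\left(R/\left(\fP+\sum_{j\notin\fJ}\mm_j\right)\right) - k$, so the inequalities above are exactly condition~(b), and chaining the equivalences completes the argument. The only genuine inputs are the associativity formula together with nonnegativity of mixed multiplicities --- which jointly perform both the reduction to a domain and the reduction to a field --- and the identification $\deg_{\PP'}^{\bn}(Y) = e(\bn;D)$; everything else is dimension bookkeeping, and the essential content (deciding when a single multidegree of an irreducible scheme is positive) has already been settled in \autoref{thmA}. Accordingly, I expect the main obstacle to be organizational rather than substantive: keeping the translation between the algebraic and geometric pictures --- in particular the dimension identities for the projections --- fully coherent.
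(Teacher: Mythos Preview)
Your proposal is correct and follows essentially the same route as the paper: reduce to $(0:_R\nn^\infty)=0$, apply the associativity formula to reduce to the case of a domain $R/\fP$ (which is then automatically a standard $\NN^p$-graded algebra over the residue field $\kk=A/\mm_A$), and invoke the irreducible case of \autoref{thmA}. The paper's proof is terser---it leaves the dimension identity $\dim\bigl(\Pi_\fJ(Y)\bigr)=\dim\bigl(R/(\fP+\sum_{j\notin\fJ}\mm_j)\bigr)-k$ implicit in \autoref{rem_isom_quotient_grad} and \autoref{lem_basics_projections}---whereas you spell it out explicitly, but the logical structure is the same.
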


For a given finite set of  ideals in a Noetherian local ring, such that  one of   them is  zero-dimensional, we can define their mixed multiplicities by considering a certain associated standard multigraded algebra (see \cite{TRUNG_VERMA_MIXED_VOL} for more information). 
These multiplicities have a long history of interconnecting problems from commutative algebra, algebraic geometry, and combinatorics, with applications to the topics of Milnor numbers, mixed volumes, and integral dependence (see, e.g., \cite{Huh12,huneke2006integral,TRUNG_VERMA_MIXED_VOL,teissier1973cycles}). 
As a direct consequence of \autoref{corB} we are able to give a characterization for the positivity of mixed multiplicities of ideals (see \autoref{cor_mixed_mult_ideals}). In another related result, we focus on homogeneous ideals generated in one degree; this case is of particular importance due to its relation with rational maps between projective varieties. 
In this setting, we provide more explicit  conditions for positivity in terms of the analytic spread of products of these ideals   (see \autoref{thm_equigen_ideals}).

\medskip

Going back to the setting of \autoref{thmA}, we switch our attention to the following discrete set 
$$
		\msupp_\PP(X) \,=\, \big\{\bn\in \NN^p \;\mid\; \deg_\PP^\bn(X)>0\big\},
$$	
which we call the {\it support of $X$ with respect to $\PP$}.  
When $X$ is irreducible,  we show that $\msupp_\PP(X)$ is  a {\it (discrete) polymatroid} (see \autoref{sub_Poly}, \autoref{prop_poly}). The latter result was included in an earlier version of this paper  when $\kk$ is algebraically closed, and an alternative proof is given by  Br{\"a}nd{\'e}n and Huh in \cite[Corollary 4.7]{LORENTZ} using the theory of Lorentzian polynomials.   An advantage of our approach is that we can describe the corresponding rank submodular functions of the polymatroids, a fact that we exploit in the applications of \autoref{sec_comb}.
Additionally, our results are valid when $X$ is just irreducible and not necessarily geometrically irreducible over $\kk$ (i.e., we do not need to assume that $X \times_\kk \overline{\kk}$ is irreducible for an algebraic closure $\overline{\kk}$ of $\kk$); it should be noticed that this generality is not covered by the statements in \cite{LORENTZ} and \cite{KAVEH_KHOVANSKII}.

\medskip

Discrete polymatroids \cite{HERZOG_HIBI_MONOMIALS} have also been studied  under the  name of M-convex sets \cite{MUROTA}. Polymatroids can also be described as the  integer points in a generalized permutohedron \cite{POSTNIKOV}, so  they are closely related to submodular functions, which are well studied in optimization, see \cite{LOVASZ} and \cite[Part IV]{SCHRIJVER} for comprehensive surveys on submodular functions, their applications, and their history.
There are  two distinguishable types of  polymatroids, linear and algebraic polymatroids, whose main properties are inherited by  their representation  in terms of other algebraic structures. 
\autoref{thmA} allows us to define another type of polymatroids, that we call \emph{Chow polymatroids}, and which interestingly lies in between the other two.
In the following theorem we summarize our main results in this direction.


\begin{headthm}[\autoref{thm_classification}]\label{thmC}
	Over an arbitrary field $\kk$, we have the following inclusions of families of polymatroids	
	$$
	\Big(\texttt{Linear polymatroids}\Big) \;\subseteq\; \Big(\texttt{Chow polymatroids}\Big) \;\subseteq\; \Big(\texttt{Algebraic polymatroids}\Big).
	$$	
	Moreover, when $\kk$ is a field of characteristic zero, the three families coincide.
\end{headthm}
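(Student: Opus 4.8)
The plan is to establish Theorem C in three separate parts: the two inclusions and the characteristic-zero coincidence. Throughout, I would fix the setup that a polymatroid on ground set $\{1,\dots,p\}$ is the set of lattice points of a generalized permutohedron, equivalently the integer points of the base polytope of a submodular rank function $r\colon 2^{\{1,\dots,p\}}\to\ZZ$; and I would use the characterization of Chow polymatroids coming from \autoref{thmA}: a Chow polymatroid is by definition $\msupp_\PP(X)$ for $X\subseteq\PP=\PP_\kk^{m_1}\times\cdots\times\PP_\kk^{m_p}$ an irreducible closed subscheme, and by \autoref{thmA} (together with \autoref{prop_poly}) its rank function is $\fJ\mapsto\dim(\Pi_\fJ(Y))$ where $Y=X_{\mathrm{red}}$, with the dimension truncation $r(\fJ)\le\sum_{j\in\fJ}m_j$ built in.

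\textbf{Linear $\subseteq$ Chow.} Given a linear polymatroid, it is represented by a collection of subspaces $V_1,\dots,V_p$ of a finite-dimensional $\kk$-vector space $W$, with rank function $r(\fJ)=\dim(\sum_{j\in\fJ}V_j)$. I would realize this geometrically: choose $m_j+1=\dim V_j$ and, after picking bases, build the image of the natural map $\PP(W^\vee)\dashrightarrow\prod_j\PP(V_j^\vee)$, or more robustly, take $Y$ to be the closure of the image of an appropriate linear map so that $\Pi_\fJ(Y)$ has dimension exactly $\dim(\sum_{j\in\fJ}V_j)-1$. The bookkeeping to match projectivization (subtracting $1$'s) with the affine linear-algebra rank is the routine part; the key point is that a sum of subspaces is exactly the span of the image under a linear map, so the projection dimensions recover the linear rank function. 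This works over any $\kk$.

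\textbf{Chow $\subseteq$ Algebraic.} An algebraic polymatroid is represented by field extensions: $L_1,\dots,L_p$ intermediate fields of a finitely generated extension $E/\kk$, with $r(\fJ)=\trdeg_\kk\big(\text{compositum of }L_j,\,j\in\fJ\big)$. Given $X$ irreducible with $Y=X_{\mathrm{red}}$, I would take $E=\kk(Y)$ the function field of $Y$ and $L_j=\kk(\Pi_{\{j\}}(Y))$ the function field of the image of the $j$-th projection, pulled back into $E$ via the dominant map $Y\to\Pi_{\{j\}}(Y)$. Then the compositum of $\{L_j:j\in\fJ\}$ inside $E$ is the function field of $\Pi_\fJ(Y)$ (since $\Pi_\fJ$ factors as the product of the single projections and $\Pi_\fJ(Y)$ is the closure of the image, its function field is generated by the coordinate functions of the factors), so $\trdeg_\kk$ of the compositum equals $\dim\Pi_\fJ(Y)$, giving exactly the Chow rank function. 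This shows every Chow polymatroid is algebraic over the same field $\kk$.

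\textbf{Characteristic zero: all three coincide.} It remains to show that over a field of characteristic zero every algebraic polymatroid is linear (the reverse inclusions already force equality). This is the one place I would invoke external input rather than construct things by hand: in characteristic zero, separability is automatic, and the classical fact — going back to the theory of derivations and the Jacobian criterion — is that an algebraic matroid/polymatroid over a characteristic-zero field is linear, realized by the row spaces of a Jacobian matrix $(\partial f_i/\partial t_k)$ for generators $f_i$ of the intermediate fields in terms of a transcendence basis $t_1,\dots,t_d$ of $E$. Concretely, if $L_j$ is generated over $\kk$ by elements expressible in the $t_k$, then $V_j:=\text{span of the differentials }dg\text{ for }g\in L_j\subseteq\Omega_{E/\kk}$ is a $\kk(\text{or }E)$-subspace whose dimension is $\trdeg_\kk L_j$, and $\dim(\sum_{j\in\fJ}V_j)=\trdeg_\kk(\text{compositum})$ precisely because in characteristic zero the differentials of a field detect its transcendence degree (no inseparability pathologies, so $\dim_E\Omega_{L/\kk}\otimes_L E=\trdeg_\kk L$ and differentials of the compositum span the sum). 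I would state this as the algebraic-polymatroid-implies-linear step, cite the standard reference (e.g.\ Oxley's matroid theory book or Ingleton's work on representability of algebraic matroids), and note that the polymatroid (as opposed to matroid) version follows by the usual expansion trick replacing each $L_j$ by a direct sum / by freely adjoining transcendentals to convert ranks into matroid ranks on a larger ground set, then contracting back.

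\textbf{The main obstacle} I anticipate is the last step: making the differential/Jacobian argument airtight at the level of \emph{polymatroids} rather than matroids, and getting the truncation conventions consistent (the dimension caps $r(\fJ)\le\sum_{j\in\fJ}m_j$ in Chow polymatroids must be matched on both the linear side, by allowing the representing subspaces to be as large as needed, and on the algebraic side). The two inclusions are essentially constructions and are conceptually straightforward; the characteristic-zero equality rests on a known but nontrivial theorem about algebraic matroids, and the care lies in citing it in the right generality and handling the matroid-to-polymatroid passage cleanly — in positive characteristic this step genuinely fails (algebraic matroids need not be linear, e.g.\ the non-Fano matroid), which is exactly why the statement is restricted to characteristic zero.
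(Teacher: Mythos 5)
Your three-part architecture matches the paper exactly: \autoref{prop_linear} gives Linear $\subseteq$ Chow, \autoref{prop_poly} gives Chow $\subseteq$ Algebraic (via the generic-point/function-field argument you sketch, which is word for word what the paper does), and the characteristic-zero coincidence is delegated to Ingleton's theorem with the polymatroid extension noted in \autoref{IngletonRem} --- same citation, same remark. Your Chow $\subseteq$ Algebraic step is correct as written, and your characteristic-zero paragraph correctly identifies both the reference and the nontrivial point (lifting from matroids to polymatroids), so those match the paper.

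The one genuine gap is in Linear $\subseteq$ Chow, and it is not ``routine bookkeeping.'' The rational map $\PP(W^\vee)\dashrightarrow\prod_j\PP(V_j^\vee)$ with $m_j+1=\dim V_j$ produces a variety $Y$ whose projections satisfy $\dim\Pi_\fJ(Y)=\dim\bigl(\sum_{j\in\fJ}V_j\bigr)-1$, because each factor carries an independent scaling: two linear functionals map to the same point of $\prod_{j\in\fJ}\PP(V_j^\vee)$ iff their restrictions to each $V_j$, $j\in\fJ$, agree \emph{up to a separate scalar per factor}. The function $\fJ\mapsto\dim\bigl(\sum_{j\in\fJ}V_j\bigr)-1$ is a genuinely different rank function (it can even fail submodularity when $\fJ_1\cap\fJ_2=\emptyset$), so this realizes a different polymatroid, not a re-indexing of the same one. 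The paper's \autoref{prop_linear} fixes this by adjoining one extra coordinate $y_{i,0}\mapsto t_i$ to each factor, i.e.\ mapping into $\PP\bigl(V_j^\vee\oplus\kk\bigr)=\PP_\kk^{\dim V_j}$ so that the image lives in the affine chart where that coordinate is $1$; this kills the per-factor rescaling and yields $\dim\Pi_\fJ(X)=\dim_\kk\bigl(\sum_{j\in\fJ}V_j\bigr)$ exactly. Your parenthetical ``or more robustly, take $Y$ to be the closure of the image of an appropriate linear map'' gestures toward this but does not supply it, and your explicit formula $m_j+1=\dim V_j$ points to the wrong ambient product. Pinning down this one-extra-coordinate trick is what makes the first inclusion go through.
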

If $\kk$ has positive characteristic, then these types of polymatroids do not agree. In fact, there exist examples of polymatroids which are algebraic over any field of positive characteristic but never linear (see \autoref{rem_Alg_not_line}). 

\smallskip

\autoref{thmA} can be applied to particular examples of varieties coming from combinatorial algebraic geometry. 
In \autoref{subsect_Schubert} we do so to matrix Schubert varieties; in this case the multidegrees are the coefficients of Schubert polynomials, thus our results allow us to give an alternative proof to a recent conjecture regarding the support of these polynomials (see \autoref{thm_Schubert}). 
In \autoref{sec_flag} and \autoref{sec_M0} we study certain embeddings of flag varieties and of the moduli space $\overline{M}_{0,p+3}$, respectively (see \autoref{prop_sottile} and \autoref{prop_parking}). In \autoref{sec_mixed} we recover a well-known characterization for the positivity of mixed volumes of convex bodies (see \autoref{thm_mixed}).

\smallskip

We now outline the contents of the article. 
In \autoref{section_notations} we set up the notation used throughout the document. 
We also include key preliminary definitions and results, paying special attention to the connection between mixed multiplicities of standard multigraded graded algebras and  multidegrees of their corresponding schemes.  
\autoref{sec_main}  is devoted to the proof of \autoref{thmA} and \autoref{corB}. 
Our results for mixed multiplicities of ideals are included in \autoref{sec_mix_ideals}. 
In \autoref{sec_polym} we relate our results to the theory of polymatroids. 
In particular, we show the proof of  \autoref{thmC}. 
We finish the paper with \autoref{sec_comb} where the applications to combinatorial algebraic geometry are presented.

\smallskip

We conclude the Introduction with an illustrative example.
The following example is constructed following the same ideas in \autoref{prop_linear}.

\begin{example}
Consider the polynomial ring $S=\kk[v_1,v_2,v_3][w_1,w_2,w_3]$ with the $\NN^3$-grading  $\deg(v_{i})=(0,0,0)$, $\deg(w_{i})=\ee_i$ for $1 \le i \le 3$.
Let $T$ be the $\NN^3$-graded polynomial ring 
$
T = \kk\left[x_0,\ldots,x_3\right]\left[y_0,\ldots,y_3\right]\left[z_0,\ldots,z_3\right]
$
where $\deg(x_i)=\ee_1$, $\deg(y_i)=\ee_2$ and $\deg(z_i)=\ee_3$.
Consider the $\NN^3$-graded $\kk$-algebra homomorphism 
$$
\varphi = T \rightarrow S, \qquad 
\begin{array}{llll}
x_0 \mapsto  w_1, & x_1 \mapsto v_1w_1, & x_2 \mapsto v_1w_1, & x_3 \mapsto v_1w_1, \\
y_0 \mapsto  w_2, & y_1 \mapsto v_1w_2, & y_2 \mapsto v_2w_2, & y_3 \mapsto (v_1+v_2)w_2,  \\
z_0 \mapsto  w_3, & z_1 \mapsto v_1w_3, & z_2 \mapsto v_2w_3, & z_3 \mapsto v_3w_3. 
\end{array}
$$
Note that $\fP = \Ker(\varphi) \subset T$ is an $\NN^3$-graded prime ideal. 
Let $Y \subset \PP = \PP_\kk^3 \times_\kk \PP_\kk^3 \times_\kk \PP_\kk^3$ be the closed subscheme corresponding to $\fP$.
In this case, one can easily compute the dimension of the projections $\Pi_{\fJ}(Y)$ for each $\fJ \subseteq \{1,2,3\}$, and so \autoref{thmA} implies that $\msupp_{\PP}(Y)$ is given by all $\bn=(n_1,\ldots,n_3) \in \NN^3$ satisfying the following conditions:
\begin{align*}
&n_1+n_2+n_3 = 3=\dim(Y),\\ 
&n_1+n_2 \leq 2=\dim\left(\Pi_{\{1,2\}}(Y)\right), \;\;
n_1+n_3 \leq 3=\dim\left(\Pi_{\{1,3\}}(Y)\right), \;\;
n_2+n_3 \leq 3=\dim\left(\Pi_{\{2,3\}}(Y)\right), \\
&n_1 \leq 1=\dim\left(\Pi_{\{1\}}(Y)\right), \;\;
n_2 \leq 2=\dim\left(\Pi_{\{2\}}(Y)\right), \;\;
n_3 \leq 3=\dim\left(\Pi_{\{3\}}(Y)\right).
\end{align*}
Hence $\msupp_{\PP}(Y)=\{(0,0,3),(0,1,2),(0,2,1),(1,0,2),(1,1,1)\}\subset\NN^3$. 
This set can also be represented graphically as follows:
\newcommand*\rows{3}
\begin{center}
	\begin{tikzpicture}
	\draw [red!10, fill=red!10] ($(0,0)$) -- ($(2,0)$) -- ($(3/2, {1*sqrt(3)/2})$)--($(1/2, {1*sqrt(3)/2})$)-- cycle;
	
	\foreach \row in {0, 1, ...,\rows} {
		\draw ($\row*(0.5, {0.5*sqrt(3)})$) -- ($(\rows,0)+\row*(-0.5, {0.5*sqrt(3)})$);
		\draw ($\row*(1, 0)$) -- ($(\rows/2,{\rows/2*sqrt(3)})+\row*(0.5,{-0.5*sqrt(3)})$);
		\draw ($\row*(1, 0)$) -- ($(0,0)+\row*(0.5,{0.5*sqrt(3)})$);
	}
	
	\node [below left] at (0,0) {$(0,0,3)$};
	\node [below right] at (3,0) {$(0,3,0)$};
	\node [above] at ($(3/2, {3*sqrt(3)/2})$) {$(3,0,0)$};
	
	\draw [red,fill] (0,0) circle [radius = 0.08];
	
	\draw [red,fill] (1,0) circle [radius = 0.08];
	
	\draw [red,fill] (2,0) circle [radius = 0.08];
	
	\draw [red,fill] ($(1/2, {1*sqrt(3)/2})$) circle [radius = 0.08];
	
	\draw [red,fill] ($(3/2, {1*sqrt(3)/2})$) circle [radius = 0.08];
	\end{tikzpicture}
	\label{fig:example}
\end{center}	

Additionally, by using {\tt Macaulay2} \cite{MACAULAY2} we can compute that its \emph{multidegree polynomial} (see \autoref{defMsupp}) is equal to:
\[
\deg_{\PP}(Y;t_1,t_2,t_3)=\,{t}_1^{3}{t}_2^{3}+\,{t}_1^{3}{t}_2^{2}{t}_3+\,{t}_1^{3}{t}_2{t}_3^{2}+\,{t}_1^{2}{t}_2^{3}{t}_3+\,{t}_1^{2}{t}_2^{2}{t}_3^{2}.
\]
We note that here we are following the convention that $\msupp_{\PP}(Y)$ is given by the complementary degrees of the polynomial $\deg_{\PP}(Y;t_1,t_2,t_3)$; for instance, the term ${t}_1^{3}{t}_2^{3}$ corresponds to the point $(3,3,3)-(3,3,0)=(0,0,3) \in \msupp_{\PP}(Y)$.	
\end{example}


\section{Notation and Preliminaries}
\label{section_notations}

In this section, we set up the notation that is used throughout the paper. 
We also present some preliminary results needed in the proofs of our main theorems.

Let $p \ge 1$ be a positive integer. 
If $\bn = (n_1,\ldots,n_p),\bm = (m_1,\ldots,m_p) \in \ZZ^p$ are two multi-indexes, we write $\bn \ge \bm$ whenever $n_i \ge m_i$ for all $1 \le i \le p$, and $\bn > \bm$ whenever $n_j > m_j$ for all $1 \le j \le p$.
For each $1 \le i \le p$, let $\ee_i \in \NN^p$ be the $i$-th elementary vector $\ee_i=\left(0,\ldots,1,\ldots,0\right)$.
Let $\mathbf{0} \in \NN^p$ and $\mathbf{1} \in \NN^p$ be the vectors $\mathbf{0}=(0,\ldots,0)$ and $\mathbf{1}=(1,\ldots,1)$ of $p$ copies of $0$ and $1$, respectively.  
For any $\bn = (n_1,\ldots,n_p) \in \ZZ^p$, we define its weight as $\lvert \bn \rvert:= n_1+\cdots+n_p$. 
Let $[p]$ denote the set $[p]:=\{1,\ldots,p\}$.

For clarity of exposition we first introduce the main concepts in the theory of multidegrees over an arbitrary field. 
Later, we also work over Artinian local rings; we highlight important details in this more general setting in \autoref{sub_Art}.

\subsection{The case over a field}

We begin by introducing a general setup for \autoref{thmA} and its preparatory results.

\begin{setup}
	\label{setup_initial}
	Let $\kk$ be an arbitrary field.
	Let $R$ be a finitely generated standard $\NN^p$-graded algebra over $\kk$, that is,  $\left[R\right]_{\mathbf{0}}=\kk$ and $R$ is finitely generated over $\kk$ by elements of degree $\ee_i$ with $1 \le i \le p$.
	For each subset $\mathfrak{J} = \{j_1,\ldots,j_k\} \subseteq [p] = \{1, \ldots, p\}$ denote by $R_{(\fJ)}$ the standard $\NN^k$-graded $\kk$-algebra given by 
	$$
	R_{(\fJ)} := \bigoplus_{\substack{i_1\ge 0,\ldots, i_p\ge 0\\ i_{j} = 0 \text{ if } j \not\in \fJ}} {\left[R\right]}_{(i_1,\ldots,i_p)};
	$$
	for instance, for each $1 \le j \le p$, $R_{({j})}$ denotes the standard $\NN$-graded $\kk$-algebra 
	$
	R_{(j)} := \bigoplus_{k \ge 0} {\left[R\right]}_{k \cdot \ee_j}.
	$
	For each $1 \le j \le p$, let  $\mm_j \subset R$ be the ideal $\mm_j := \left([R]_{\ee_j
	}\right)$.
	Let $\nn \subset R$ be the multigraded irrelevant ideal $\nn := \mm_1 \cap \cdots \cap \mm_p$.
	For each $\fJ \subseteq [p]$, let $\nn_\fJ \subset R_{(\fJ)}$ be the corresponding multigraded irrelevant ideal $\nn_\fJ := \left(\bigcap_{j \in \fJ}\mm_j\right) \cap R_{(\fJ)}$.
	Let $X$ be the multiprojective scheme $X := \multProj(R)$ (see \autoref{def_multproj} below) and $X_\fJ$ be the multiprojective scheme $X_\fJ := \multProj(R_{(\fJ)})$ for each $\fJ \subseteq [p]$.
	To avoid trivial situations, we always assume that $X \neq \emptyset$.
\end{setup}

\begin{definition}
	\label{def_multproj}
	The multiprojective scheme  $\multProj(R)$ is given by $
	\multProj(R) := \big\{ \fP \in \Spec(R) \mid \fP \text{ is $\NN^p$-graded and } \fP \not\supseteq \nn \big\},
	$
	and its scheme structure is obtained by using multi-homogeneous localizations (see, e.g., \cite[\S 1]{HYRY_MULTIGRAD}). 
\end{definition}

The inclusion $R_{(\fJ)} \hookrightarrow R$ induces the natural projection 
\begin{align*}
\Pi_\fJ: X \rightarrow X_\fJ, \quad \fP \in X \mapsto \fP \cap R_{(\fJ)} \in X_\fJ. 
\end{align*}
We embed $X$ as a closed subscheme of a multiprojective space $\PP:=\PP_\kk^{m_1} \times_\kk \cdots \times_\kk \PP_\kk^{m_p}$.
Then, for each $\fJ = \{j_1,\ldots,j_k\} \subseteq [p]$, $\Pi_\fJ: X \rightarrow X_\fJ$ corresponds with the restriction to $X$ and to $X_\fJ$ of the natural projection 
$$
\Pi_\fJ: \PP \;\rightarrow\; \PP_\kk^{m_{j_1}} \times_\kk \cdots \times_\kk \PP_\kk^{m_{j_k}},
$$
and $X_\fJ$ becomes a closed subscheme of $\PP_\kk^{m_{j_1}} \times_\kk \cdots \times_\kk \PP_\kk^{m_{j_k}}$.

For any multi-homogeneous element $x \in R$, the closed subscheme $\multProj(R/xR) \subseteq X$ is denoted by $X \cap V(x)$.

\begin{notation}
	From now on, $\fJ=\{j_1,\ldots,j_k\}$ denotes a subset of $[p]$.
	Set $r := \dim(X)$ and $r(\fJ):=\dim\left(\Pi_\fJ(X)\right)$ for each $\fJ \subseteq [p]$.
	For a singleton set $\{i\} \subseteq [p]$, $r(\{i\})$ and $\Pi_{\{i\}}$ are simply denoted by $r(i)$ and $\Pi_i$, respectively.
\end{notation}

Note that the image of $\Pi_\fJ : X \rightarrow X_\fJ$ can be described by the following isomorphism
\begin{equation}
\label{eq_isom_0}
\Pi_\fJ(X) \;\cong\; \multProj\left(\frac{R_{(\fJ)}}{R_{(\fJ)} \cap \left(0:_R\nn^\infty\right)}\right).
\end{equation}

\begin{remark}
	\label{rem_isom_quotient_grad}
	Since $R = R_{(\fJ)} \oplus \left(\sum_{j \not\in \fJ} \mm_j\right)$, we obtain a natural isomorphism 
	$
	R_{(\fJ)} \cong \frac{R}{\sum_{j \not\in \fJ} \mm_j}
	$ 
	of $\NN^{k}$-graded $\kk$-algebras where $k = \lvert \fJ \rvert$.
\end{remark}

We now provide some preparatory results.

\begin{lemma}
	\label{lem_basics_projections}
	Under \autoref{setup_initial}, the following statements hold:
	\begin{enumerate}[\rm (i)]
		\item $r = \dim(X) = \dim\left(R/\left(0:_R\nn^\infty\right)\right) - p$.
		\item There is an isomorphism
		\begin{equation}
			\label{eq_isom}
			\Pi_\fJ(X) \;\cong\; \multProj\left(R/\Big(\left(0:_R\nn^\infty\right) + \sum_{j \not\in \fJ} \mm_j\Big)\right).
		\end{equation}
		\item If $\left(0:_R\nn^\infty\right)=0$, then $\Pi_\fJ(X) \cong \multProj(R_{(\fJ)})=X_\fJ$.
		\item If $\left(0:_R\nn^\infty\right)=0$, then $\left(0:_{R_{(\fJ)}} \nn_\fJ^\infty\right)=0$.
	\end{enumerate}
\end{lemma}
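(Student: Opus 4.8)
The plan is to establish the four statements in order, since each one essentially reduces to (or reuses) the previous. For (i), I would use the standard fact that the Krull dimension of a multiprojective scheme $X = \multProj(R)$ is governed by the saturation of $R$ with respect to the multigraded irrelevant ideal $\nn$: the scheme $X$ only ``sees'' the part of $\Spec(R)$ away from $V(\nn)$, and passing to $R/(0:_R\nn^\infty)$ kills precisely the $\nn$-torsion without changing $\multProj$. Concretely, $\multProj(R) \cong \multProj(R/(0:_R\nn^\infty))$ and both have the same homogeneous localizations at relevant primes; then the $-p$ correction is the usual phenomenon that $\Proj$ of a standard graded ring over a field drops dimension by $1$ for each of the $p$ gradings (each $\PP^{m_i}$ factor contributes a ``cone direction''). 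I expect this to follow from a dimension count on the affine cone together with a reference to a known formula (e.g.\ in \cite{HYRY_MULTIGRAD} or a direct argument via the multigraded Hilbert polynomial having degree $\dim R - p$ in its variables).

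For (ii), I would combine the isomorphism \eqref{eq_isom_0} describing $\Pi_\fJ(X)$ with \autoref{rem_isom_quotient_grad}. The point is that
$$
R_{(\fJ)} \cap \left(0:_R\nn^\infty\right) \;\cong\; \frac{\left(0:_R\nn^\infty\right) + \sum_{j \notin \fJ}\mm_j}{\sum_{j\notin\fJ}\mm_j}
$$
under the identification $R_{(\fJ)} \cong R/\sum_{j\notin\fJ}\mm_j$; this is a routine check using the direct sum decomposition $R = R_{(\fJ)} \oplus \big(\sum_{j\notin\fJ}\mm_j\big)$. Substituting into \eqref{eq_isom_0} gives exactly \eqref{eq_isom}. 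For (iii), if $\left(0:_R\nn^\infty\right) = 0$ then \eqref{eq_isom_0} immediately collapses to $\Pi_\fJ(X) \cong \multProj(R_{(\fJ)}) = X_\fJ$, since $R_{(\fJ)} \cap 0 = 0$; nothing more is needed.

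Statement (iv) is the one requiring a genuine argument. Assuming $\left(0:_R\nn^\infty\right) = 0$, I want to show $R_{(\fJ)}$ has no $\nn_\fJ$-torsion. Here I would argue as follows: an element $a \in R_{(\fJ)}$ killed by a power of $\nn_\fJ = \big(\bigcap_{j\in\fJ}\mm_j\big)\cap R_{(\fJ)}$ is, in particular, killed by a power of $\mm_j \cap R_{(\fJ)} = [\mm_j]_{R_{(\fJ)}}$ for each $j \in \fJ$. One then has to promote this to being killed by a power of $\mm_j$ in $R$ for $j \in \fJ$, and separately handle the ideals $\mm_j$ with $j \notin \fJ$: but $a \in R_{(\fJ)}$ already lies in degrees supported on $\fJ$, so multiplying $a$ by generators of $\mm_j$ for $j \notin \fJ$ lands in degrees that, after enough steps, are annihilated for degree-support reasons — more precisely $\mm_j^N a$ consists of elements whose $j$-th degree is $\geq N$ while nothing in $R_{(\fJ)}\cdot(\text{low degree in }j)$ is forced to vanish, so I instead observe that $\nn = \mm_1\cap\cdots\cap\mm_p$ and a power of $\nn$ annihilates $a$ once powers of each $\mm_j$ do, using that $a$ is already $\mm_j$-torsion-free-irrelevant for $j\notin\fJ$ in a suitable sense. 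The cleanest route is probably: $a \in (0:_{R_{(\fJ)}}\nn_\fJ^\infty)$ implies $\nn_\fJ^N a = 0$ in $R_{(\fJ)}$; since $R_{(\fJ)} \hookrightarrow R$ and $\nn_\fJ R \subseteq \sqrt{\nn}$ (as $V(\nn_\fJ R)$ inside $\multProj$-relevant locus pulls back correctly), a power of $\nn$ annihilates $a$ in $R$, forcing $a = 0$. The \textbf{main obstacle} is making this last comparison $\nn_\fJ^\infty$-torsion $\Rightarrow$ $\nn^\infty$-torsion precise: one must check that the extension ideal $\nn_\fJ R$ has radical containing $\nn$, equivalently that $V(\nn_\fJ R) \subseteq V(\nn)$, which should follow because any prime of $R$ not containing $\nn$ restricts to a prime of $R_{(\fJ)}$ not containing $\nn_\fJ$ (this is essentially the well-definedness of $\Pi_\fJ$ on $\multProj$), and then invert this containment at the level of varieties. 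Once that radical containment is in hand, (iv) drops out, and it can be fed back to sharpen (iii) and to support the inductive projection arguments that \autoref{thm_projections} will need.
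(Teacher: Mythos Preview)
Your treatment of (i)--(iii) matches the paper's proof essentially verbatim: cite \cite{HYRY_MULTIGRAD} for the dimension formula, and combine \autoref{eq_isom_0} with \autoref{rem_isom_quotient_grad} for the rest.

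For (iv), your argument ultimately works, but you are making it much harder than it is (the paper simply says ``This part is clear''), and there is a slip in your write-up. You wrote $\nn_\fJ R \subseteq \sqrt{\nn}$, which is the wrong direction; what you need---and what your variety statement $V(\nn_\fJ R) \subseteq V(\nn)$ correctly expresses---is $\nn \subseteq \sqrt{\nn_\fJ R}$. In fact no radicals are needed: since $R$ is standard multigraded one has $\nn = \mm_1 \cdots \mm_p$ and $\nn_\fJ R = \prod_{j\in\fJ}\mm_j$, so $\nn \subseteq \nn_\fJ R$ is an honest ideal containment. Then $\nn_\fJ^N a = 0$ in $R_{(\fJ)}$ gives $(\nn_\fJ R)^N a = 0$ in $R$, hence $\nn^N a = 0$, hence $a \in (0:_R\nn^\infty) = 0$. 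That is the entire argument; the detour through restrictions of primes and the well-definedness of $\Pi_\fJ$ is unnecessary.
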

\begin{proof}
	(i) This formula follows from \cite[Lemma 1.2]{HYRY_MULTIGRAD} (also, see \cite[Corollary 3.5]{SPECIALIZATION_RAT_MAPS}).
	
	(ii) From the natural maps $R_{(\fJ)} \hookrightarrow R \twoheadrightarrow R/\left(0:_R\nn^\infty\right)$, we obtain a natural isomorphism 
	$$
	R_{(\fJ)}/\left(R_{(\fJ)} \cap \left(0:_R\nn^\infty\right)\right) \xrightarrow{\cong} \big(R/\left(0:_R\nn^\infty\right)\big)_{(\fJ)}.
	$$
	By using \autoref{rem_isom_quotient_grad} it follows that $\big(R/\left(0:_R\nn^\infty\right)\big)_{(\fJ)} \cong R/\left(\left(0:_R\nn^\infty\right) + \sum_{j \not\in \fJ} \mm_j\right)$.
	Therefore, the claimed isomorphism is obtained from \autoref{eq_isom_0}.
	
	(iii) It follows directly from part (ii) and \autoref{rem_isom_quotient_grad}.
	
	(iv) This part is clear.	
\end{proof}

\smallskip

Let $P_R(\ttt)=P_R(t_1,\ldots,t_p) \in \QQ[\ttt]=\QQ[t_1,\ldots,t_p]$ be the \emph{Hilbert polynomial} of $R$ (see, e.g., \cite[Theorem 4.1]{HERMANN_MULTIGRAD}, \cite[Theorem 3.4]{MIXED_MULT}).
Then, the degree of $P_R$ is equal to $r$ and 
$$
P_R(\nu) = \dim_\kk\left([R]_\nu\right) 
$$
for all $\nu \in \NN^p$ such that $\nu \gg \mathbf{0}$.
Furthermore, if we write 
\begin{equation}
\label{eq_Hilb_poly}
P_{R}(\ttt) = \sum_{n_1,\ldots,n_p \ge 0} e(n_1,\ldots,n_p)\binom{t_1+n_1}{n_1}\cdots \binom{t_p+n_p}{n_p},
\end{equation}
then $0 \le e(n_1,\ldots,n_r) \in \ZZ$ for all $n_1+\cdots+n_p = r$.

\begin{remark}\label{modSat} The following are basic properties of Hilbert polynomials.

\begin{enumerate}[(i)]
\item Since $\nn^k(0:_R\nn^\infty)=0$ for $k\gg 0$ we have $\dim_\kk\left([R]_\nu\right) = \dim_\kk\left([R/(0:_R\nn^\infty)]_\nu\right) $ for $\nu\gg \mathbf{0}$. Thus, $$P_{R}(\ttt)=P_{R/(0:_R\nn^\infty)}(\ttt).$$

\item Let $\LL$ be a field extension of $\kk$. 
Then, $R \otimes_\kk \LL$  is a finitely generated standard $\NN^p$-graded $\LL$-algebra and   
$
\dim_{\LL} \left([R \otimes_\kk \LL]_\nu\right)  = \dim_\kk\left([R]_\nu\right)
$
for all $\nu \in \NN^p$. Thus,  $$P_{R \otimes_\kk \LL}(\ttt) =P_{R}(\ttt).$$  
In particular, one can always assume $\kk$ is an infinite field (for instance, we can substitute $\kk$ by a purely transcendental field extension $\kk(\xi)$).
\end{enumerate}

\end{remark}

Under the notation of \autoref{eq_Hilb_poly} we define the following invariants. 

\begin{definition} \label{def_multdeg}
Let $\bn = (n_1,\ldots,n_p) \in \NN^p$ with $\lvert\bn\rvert=r$. Then:
	\begin{enumerate}[(i)]
		\item $e(\bn,R) := e(n_1,\ldots,n_p)$ is the \textit{mixed multiplicity of $R$ of type $\mathbf{n}$}.
		\item $\deg_\PP^\bn(X):=e(n_1,\ldots,n_p)$ is the \textit{multidegree of $X=\multProj(R)$ of type $\bn$ with respect to $\PP$}. 
	\end{enumerate} 
\end{definition}

As stated in the Introduction, in classical geometrical terms, when $\kk$ is algebraically closed, $\deg_\PP^\bn(X)$ is also equal to the number of points (counting multiplicity) in the intersection of $X$ with the product $L_1 \times_\kk \cdots \times_\kk L_p  \subset \PP$, where $L_i \subseteq \PP_\kk^{m_i}$ is a general linear subspace of dimension $m_i-n_i$ for each $1 \le i \le p$ (see \cite{VAN_DER_WAERDEN}, \cite[Theorem 4.7]{MIXED_MULT}).

The multidegrees of $X$ can be defined easily in terms of Chow rings and in terms of Hilbert series.

\begin{remark} 
	\label{rem_chow_ring} The Chow ring of $\PP = \PP_\kk^{m_1} \times_\kk \cdots \times_\kk \PP_\kk^{m_p}$ is given by 
	$$
	A^*(\PP) = \frac{\ZZ[H_1,\ldots,H_p]}{\left(H_1^{m_1+1},\ldots,H_p^{m_p+1}\right)}
	$$
	where $H_i$ represents the class of the inverse image of a hyperplane of $\PP_\kk^{m_i}$ under the natural projection $\Pi_i: \PP \rightarrow \PP_\kk^{m_i}$. 
	Then, the class of the cycle associated to $X$ coincides with
	$$	\left[X\right] \;=\; \sum_{\substack{0 \le n_i \le m_i\\ \lvert\bn\rvert=r}} \deg_\PP^\bn\left(X\right)\, H_1^{m_1-n_1}\cdots H_p^{m_p-n_p} \;\in A^*(\PP).	$$	
\end{remark}

\begin{remark}
	\label{rem_Hilb_series}
	By considering the Hilbert series 
	$
	\Hilb_R(t_1,\ldots,t_p) := \sum_{\nu \in \NN^p} \dim_\kk\left([R]_\nu\right)t_1^{\nu_1}\cdots t_p^{\nu_p} 
	$
	of $R$, one can analogously define the notions of mixed multiplicities and multidegrees  (see \cite[\S 8.5]{MILLER_STURMFELS}, \cite[Theorem A]{MIXED_MULT}).
	Here we quickly derive this analogous definition because we shall use it in \autoref{subsect_Schubert}.
	Let $S=\kk[x_{1,0},x_{1,1},\ldots,x_{1,m_1}] \otimes_\kk \cdots \otimes_\kk \kk[x_{p,0},x_{p,1},\ldots,x_{p,m_p}]$ be the multigraded polynomial ring corresponding with $\PP=\PP_\kk^{m_1} \times_\kk \cdots \times_\kk \PP_\kk^{m_p}$, that is $\PP = \multProj(S)$.
	By considering an $S$-free resolution of $R$, we can write 
	$$
	\Hilb_R(\ttt)=\mathcal{K}(R;\ttt)/\mathbf{(1-t)^{m+1}} = \mathcal{K}(R;t_1,\ldots,t_p)/\prod_{i=1}^p(1-t_i)^{m_i+1},
	$$ 
	where $\mathcal{K}(R;\ttt)$ is called the \emph{K-polynomial of $R$} (see \cite[Definition 8.21]{MILLER_STURMFELS}).
	Let $\mathcal{C}(R;\ttt) \in \ZZ[t_1,\ldots,t_p]$ be the sum of all the terms in $\mathcal{K}(R;\mathbf{1-t})$ of total degree equal to $\dim(S)-\dim(R)$ (see \cite[Definition 8.45]{MILLER_STURMFELS}).
	Then, if $(0:_R\nn^\infty)=0$, we obtain the equality 
	$$
	\mathcal{C}(R;\ttt) = \sum_{\substack{0 \le n_i \le m_i\\ \lvert\bn\rvert=r}} \deg_\PP^\bn\left(X\right)\, t_1^{m_1-n_1}\cdots t_p^{m_p-n_p}.
	$$
\end{remark}
\begin{proof}
	From \cite[Theorem A(I)]{MIXED_MULT} we have $\Hilb_R(\ttt) = \sum_{\lvert\mathbf{k}\rvert=\dim(R)}Q_\mathbf{k}(\ttt)/\mathbf{(1-t)}^\mathbf{k}$ where $Q_\mathbf{k}(\ttt) \in \ZZ[\ttt]$.
	The assumption $(0:_R\nn^\infty)=0$ gives that $\dim(R)=r+p$ (see \autoref{lem_basics_projections}(i)).
	Hence, by using \cite[Theorem A(II,III)]{MIXED_MULT} we obtain that $\deg_\PP^\bn\left(X\right)=e(\bn;R) = Q_{\mathbf{n+1}}(\mathbf{1})$ for all $\lvert\bn\rvert = r$.
	Also, the assumption $(0:_R\nn^\infty)=0$ and \cite[Theorem 2.8(ii)]{MIXED_MULT} imply that $Q_\mathbf{k}(\mathbf{1})=0$ when $\lvert\mathbf{k}\rvert=\dim(R)$ and $\mathbf{k}_i=0$ for some $1 \le i \le p$.
	
	After writing $\Hilb_R(\ttt) = \sum_{\lvert\mathbf{k}\rvert=\dim(R)}Q_\mathbf{k}(\ttt)\mathbf{(1-t)}^\mathbf{m+1-k}/\mathbf{(1-t)}^\mathbf{m+1}$, we obtain the equality 
	$$\mathcal{K}(R;\ttt)=\sum_{\lvert\mathbf{k}\rvert=\dim(R)}Q_\mathbf{k}(\ttt)\mathbf{(1-t)}^\mathbf{m+1-k}.
	$$	
	Making the substitution $t_i\mapsto (1-t_i)$ and choosing the terms of total degree  $\dim(S)-\dim(R)=\sum_{i=1}^pm_i-r$, it follows that $\mathcal{C}(R;\ttt)=\sum_{\lvert\mathbf{k}\rvert=\dim(R)}Q_\mathbf{k}(\mathbf{1})\mathbf{t^{m+1-k}} = \sum_{\lvert\mathbf{n}\rvert=r}Q_{\mathbf{n+1}}(\mathbf{1})\mathbf{t^{m-n}}$.
	So, the result is clear.
\end{proof}

Although in the proofs of \autoref{thmA} and \autoref{corB} we do not exploit the fact that multidegrees can be defined as in \autoref{rem_chow_ring}, we do encode the multidegrees in a homogeneous polynomial that mimics the cycle associated to $X$ in the Chow ring $A^*(\PP)$. 
The following objects are the main focus of this
paper.

\begin{definition}\label{defMsupp}
	Let $X\subseteq \PP = \PP_{\kk}^{m_1} \times_\kk \cdots \times_\kk \PP_{\kk}^{m_p}$ be a closed subscheme with $r = \dim(X)$. 
	We denote the \emph{multidegree polynomial of $X$ with respect to $\PP$} as the homogeneous polynomial 
	$$
	\deg_\PP(X;t_1,\ldots,t_p) \;:=\;  \sum_{\substack{0 \le n_i \le m_i\\ \lvert\bn\rvert=r}} \deg_\PP^\bn\left(X\right)\, t_1^{m_1-n_1}\cdots t_p^{m_p-n_p} \;\in\; \NN[t_1,\ldots,t_p]
	$$
	of degree $m_1+\cdots+m_p-r$.
	We say that the \textit{support of $X$ with respect to $\PP$} is given by
	$$
		\msupp_\PP(X) \,:=\, \big\{\bn\in \NN^p \;\mid\; \deg_\PP^\bn(X)>0\big\}.
	$$	
\end{definition}

\begin{remark}
	Note that under the assumption $(0:_R\nn^\infty)=0$ we obtain the equality $\deg_\PP(X;\ttt) = \mathcal{C}(R;\ttt)$.
\end{remark}

\smallskip

\subsection{The case over an Artinian local ring}\label{sub_Art}

In this subsection, we show how the mixed multiplicities are defined for a standard multigraded algebra over an Artinian local ring.

\begin{setup}
	\label{setup_Artinian}
	Keep the notations and assumptions introduced in \autoref{setup_initial} and now substitute the field $\kk$ by an Artinian local ring $A$.
\end{setup}

In this setting, the notion of mixed multiplicities is defined essentially in the same way as in \autoref{def_multdeg}.

\begin{definition}
	\label{def_mixed_mult}
	Let $P_R(\ttt)=P_R(t_1,\ldots,t_p) \in \QQ[\ttt]=\QQ[t_1,\ldots,t_p]$ be the \emph{Hilbert polynomial} of $R$ (see, e.g., \cite[Theorem 4.1]{HERMANN_MULTIGRAD}, \cite[Theorem 3.4]{MIXED_MULT}).
	Then, as before,  the degree of $P_R$ is  equal to  $\dim\left(R/\left(0:_R\nn^\infty\right)\right) - p$ and 
	$$
	P_R(\nu) = \text{length}_A\left([R]_\nu\right) 
	$$
	for all $\nu \in \NN^p$ such that $\nu \gg \mathbf{0}$.
	If we write 
	$
	P_{R}(\ttt) = \sum_{n_1,\ldots,n_p \ge 0} e(n_1,\ldots,n_p)\binom{t_1+n_1}{n_1}\cdots \binom{t_p+n_p}{n_p},
	$
	then $0 \le e(n_1,\ldots,n_r) \in \ZZ$ for all $n_1+\cdots+n_p = \dim\left(R/\left(0:_R\nn^\infty\right)\right) - p$.
	For each $\bn = (n_1,\ldots,n_p) \in \NN^p$ with $\lvert \bn\rvert=\dim\left(R/\left(0:_R\nn^\infty\right)\right) - p$, we set that $e(\bn,R) := e(n_1,\ldots,n_p)$ is the \textit{mixed multiplicity of $R$ of type $\mathbf{n}$}.
\end{definition}

\subsection{Polymatroids}\label{sub_Poly} In this subsection we include some relevant information about polymatroids.

\begin{definition}
	Let $E$ be a finite set and $r$ a function $r:2^{E}\rightarrow \ZZ_{\geq 0}$ satisfying the following two properties: (i) it is \emph{non-decreasing}, i.e., $r(\fT_1)\leq r(\fT_2)$ if $\fT_1\subseteq \fT_2\subseteq E$, and (ii) it is \emph{submodular}, i.e., $r(\fT_1\cap \fT_2)+r(\fT_1\cup \fT_2)\leq r(\fT_1)+r(\fT_2)$ if $\fT_1,\fT_2 \subseteq E$. The function $r$ is called a {\it rank function on $E$}. We usually let $E=[p]$.
	
	A \textit{$($discrete$)$ polymatroid} $\mathcal{P}$ on $[p]$ with rank function $r$ is a collection of points in $\NN^p$ of the following form
	\[
	\mathcal{P=}\left\{\bx=(x_1,\ldots, x_p)\in \NN^p \;\mid\; \sum_{j\in\fJ} x_j\leq r(\fJ), \;\forall \fJ\subsetneq [p], \;\sum_{i\in [p]} x_i=r([p])\right\}.
	\]
	By definition, a  polymatroid consists of the integer points of a polytope (the convex hull of $\mathcal{P}$), we call that polytope a \emph{base polymatroid polytope}. We note that a polymatroid is completely determined by its rank function.
\end{definition}

\begin{remark}\label{rem:matroid}
If the rank function of $\mathcal{P}$ satisfies $r(\{i\})\leq 1$ for every $i\in [p]$, then $\mathcal{P}$ is called a \emph{matroid}. In other words, matroids are discrete polymatroids where every integer point is an element of $\{0,1\}^p$. A general reference for matroids is \cite{OXLEY}.
\end{remark}

In the following definition we consider the standard notions of linear and algebraic matroids  (see  \cite[Chapter 6]{OXLEY}) and adapt them to the polymatroid case.

\begin{definition}\label{def_algebraic_polymatroid}
	Let $\mathcal{P}$ be a polymatroid.
	\begin{itemize}
		\item 	We say $\mathcal{P}$ is \textit{linear} over a field $\kk$ if there exists a $\kk$-vector space $V$ and subspaces $V_i, i\in [p]$ such that for every $\fJ\subseteq [p]$ we have $r(\fJ)=\dim_\kk\left(\sum_{j\in\fJ} V_j\right)$ \cite[Proposition 1.1.1]{OXLEY}. The vector space $V$ together with the subspaces $V_i$ for $1\leq i\leq p$, are a \emph{linear representation} of $\mathcal{P}$.
		\item 	We  say $\mathcal{P}$ is \textit{algebraic} over a field $\kk$ if there exists a field extension $\kk\hookrightarrow \LL$ and intermediate field extensions $\LL_i, i\in [p]$ such that for every $\fJ\subseteq [p]$ we have $r(\fJ)=\text{trdeg}_\kk\left(\bigwedge_{j\in \fJ} \LL_j\right)$, 
		where $\bigwedge_{j\in \fJ} \LL_j$ is the \emph{compositum} of the subfields, i.e., the smallest subfield in $\LL$ containing all of them \cite[Theorem 6.7.1]{OXLEY}. The field $\LL$ together with the subfields $\LL_i$ for $1\leq i\leq p$, are an \emph{algebraic representation} of $\mathcal{P}$. 
	\end{itemize}
\end{definition}

\section{A characterization for the positivity of multidegrees}\label{sec_main}

In this section, we focus on characterizing the positivity of multidegrees and our main goal is to prove \autoref{thmA} and \autoref{corB}.
Throughout this section we continue using the same notations and assumptions of \autoref{section_notations}.

We begin with the following result that relates the Hilbert polynomial $P_R(\ttt) \in \QQ[\ttt]$ of $R$ with the dimensions $r(\fJ)=\dim\left(\Pi_{\fJ}(X)\right)$ of the schemes $\Pi_{\fJ}(X)$. 
It extends \cite[Theorem 1.7]{TRUNG_POSITIVE} to a multigraded setting.

\begin{proposition}
	\label{thm_deg_Hilb_pol}
	Assume \autoref{setup_initial}.
	For each $\fJ = \{j_1,\ldots, j_k\}\subseteq [p]$, let $\deg(P_R;\fJ)$ be the degree of the Hilbert polynomial $P_R$ in the variables $t_{j_1},\ldots, t_{j_k}$. 
	Then, for every such $\fJ = \{j_1,\ldots, j_k\}$ we have that
	$$
	\deg(P_R; \fJ) = r(\fJ).
	$$
\end{proposition}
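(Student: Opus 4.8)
The plan is to first reduce to the case where $R$ is a domain with $\left(0:_R\nn^\infty\right)=0$, and then to compare a generic ``$\fJ$-slice'' of the multigraded Hilbert function of $R$ with the ordinary Hilbert function of $R_{(\fJ)}$, using \autoref{lem_basics_projections}.

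\smallskip

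\textbf{Reductions.} By \autoref{modSat}(i) we have $P_R=P_{R/\left(0:_R\nn^\infty\right)}$, and by \autoref{eq_isom_0} (see also \autoref{lem_basics_projections}(ii)) the scheme $\Pi_\fJ(X)$ is unchanged up to isomorphism when $R$ is replaced by $R/\left(0:_R\nn^\infty\right)$, so we may assume $\left(0:_R\nn^\infty\right)=0$. Then \autoref{lem_basics_projections}(iv) gives $\left(0:_{R_{(\fJ)}}\nn_\fJ^\infty\right)=0$, so \autoref{lem_basics_projections}(iii),(i) applied to $R_{(\fJ)}$ yield $r(\fJ)=\dim\Pi_\fJ(X)=\dim\multProj\left(R_{(\fJ)}\right)=\dim R_{(\fJ)}-k$, and likewise $\deg\left(P_{R_{(\fJ)}}\right)=\dim R_{(\fJ)}-k$. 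The cases $\fJ=\emptyset$ (both sides equal $0$) and $\fJ=[p]$ (where $\deg(P_R;[p])=\deg(P_R)=\dim X=\dim R-p=r([p])$ by \autoref{lem_basics_projections}(i)) are immediate, so assume $\emptyset\neq\fJ\subsetneq[p]$ and put $\fJ^c:=[p]\setminus\fJ$, writing multidegrees as $\nu=(\nu_\fJ,\nu_{\fJ^c})$. We will repeatedly use the elementary remark that, writing $P_R=\sum_{a}c_a(t_{\fJ^c})\,t_\fJ^{a}\in\QQ[t_{\fJ^c}][t_\fJ]$, one has $D:=\deg(P_R;\fJ)=\max\{\lvert a\rvert\mid c_a\neq 0\}$, the ``$\fJ$-leading form'' $\sum_{\lvert a\rvert=D}c_a(t_{\fJ^c})\,t_\fJ^{a}$ is a nonzero polynomial, and hence $\deg\!\left(P_R(t_\fJ,\bar\mu);\fJ\right)=D$ for every $\bar\mu\in\NN^{\fJ^c}$ outside a proper closed subset; for such $\bar\mu\gg\f0$ we moreover have $P_R(\nu_\fJ,\bar\mu)=\dim_\kk[R]_{(\nu_\fJ,\bar\mu)}$ for all $\nu_\fJ\gg\f0$.

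\smallskip

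\textbf{The inequality $\deg(P_R;\fJ)\le r(\fJ)$.} Fix $\bar\mu\gg\f0$ as above. Then $M:=\bigoplus_{\alpha\in\NN^{\fJ}}[R]_{(\alpha,\bar\mu)}$ is a finitely generated $\NN^{k}$-graded $R_{(\fJ)}$-module whose Hilbert polynomial is $P_R(t_\fJ,\bar\mu)$, so $\deg(P_R;\fJ)=\deg\!\left(P_R(t_\fJ,\bar\mu);\fJ\right)=\deg P_M\le\deg\!\left(P_{R_{(\fJ)}}\right)=\dim R_{(\fJ)}-k=r(\fJ)$, where the middle inequality holds because $M$ is a finite $R_{(\fJ)}$-module.

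\smallskip

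\textbf{The inequality $\deg(P_R;\fJ)\ge r(\fJ)$.} By \autoref{rem_isom_quotient_grad}, $R_{(\fJ)}\cong R/\sum_{j\notin\fJ}\mm_j$, and therefore $\dim R_{(\fJ)}=\max_{\fP}\dim(R/\fP)_{(\fJ)}$ over the minimal primes $\fP$ of $R$; fix one attaining the maximum (it does not contain $\nn$, since $\left(0:_R\nn^\infty\right)=0$). From $\dim_\kk[R/\fP]_\nu\le\dim_\kk[R]_\nu$ we get $P_{R/\fP}\le P_R$ on $\NN^{p}_{\gg\f0}$, hence $\deg(P_R;\fJ)\ge\deg(P_{R/\fP};\fJ)$ (compare the two Hilbert polynomials along a line $t_\fJ=s\,\fv$ with $\fv\gg\f0$ generic, at a common generic $\bar\mu$). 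Replacing $R$ by $R/\fP$, we may assume $R$ is a domain with $\nn\neq0$, in which case $\left(0:_R\nn^\infty\right)=0$ again. Because $R$ is standard graded and $X\neq\emptyset$ we have $[R]_{\ee_j}\neq0$ for all $j$, so $[R]_{(\f0,\bar\mu)}=[R_{(\fJ^c)}]_{\bar\mu}\neq0$ for every $\bar\mu\in\NN^{\fJ^c}$; choose $\bar\mu\gg\f0$ generic as above and $0\neq u\in[R]_{(\f0,\bar\mu)}$. Standardness gives $[R]_{(\nu_\fJ,\bar\mu)}=[R]_{(\nu_\fJ,\f0)}\cdot[R]_{(\f0,\bar\mu)}$ together with $[R]_{(\nu_\fJ,\f0)}=[R_{(\fJ)}]_{\nu_\fJ}$, and since $R$ is a domain multiplication by $u$ is an injection $[R_{(\fJ)}]_{\nu_\fJ}\hookrightarrow[R]_{(\nu_\fJ,\bar\mu)}$. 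Thus $P_R(\nu_\fJ,\bar\mu)\ge P_{R_{(\fJ)}}(\nu_\fJ)$ for $\nu_\fJ\gg\f0$, whence $\deg(P_R;\fJ)=\deg\!\left(P_R(t_\fJ,\bar\mu);\fJ\right)\ge\deg\!\left(P_{R_{(\fJ)}}\right)=\dim R_{(\fJ)}-k=r(\fJ)$. Combining the two inequalities proves the proposition.

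\smallskip

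\textbf{The main difficulty} is the inequality $\deg(P_R;\fJ)\ge r(\fJ)$: a generic $\fJ$-slice $\bigoplus_{\alpha}[R]_{(\alpha,\bar\mu)}$ need not ``see'' the top-dimensional part of $\Pi_\fJ(X)$, which forces the passage to a minimal prime of $R$ realizing $\dim R_{(\fJ)}$ and the use of the fact that, in a domain, multiplication by a nonzero element is injective. (Alternatively one may coarsen the $\NN^{p}$-grading to the $\NN^{2}$-grading grouping the coordinates of $\fJ$ against those of $\fJ^c$ and invoke the bigraded case \cite[Theorem~1.7]{TRUNG_POSITIVE}, after checking that this coarsening preserves $\left(0:_R\nn^\infty\right)=0$ and shifts the relevant degree by $k-1$.)
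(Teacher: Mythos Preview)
Your proof is correct and closely related to the paper's, but organized differently. Both arguments reduce to $(0:_R\nn^\infty)=0$ and study the same $R_{(\fJ)}$-module $M=\bigoplus_{\alpha}[R]_{(\alpha,\bar\mu)}$ obtained by freezing the $\fJ^c$-coordinates at a generic large value. The paper, however, obtains the equality $\deg P_M=r(\fJ)$ in one stroke: after extending $\kk$ to an infinite field it uses $\grade(\mm_i)\ge 1$ to pick non-zero-divisors $y_i\in[R]_{\ee_i}$, observes that $\prod_i y_i^{s_i'}\in M$, and concludes $\Ann_{R_{(\fJ)}}(M)=0$, so that $\dim\left(\Supp(M)\cap X_\fJ\right)=\dim X_\fJ=r(\fJ)$ directly via \cite[Theorem~3.4]{MIXED_MULT}. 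You instead prove the two inequalities separately: $\le$ is the soft fact that $M$ is a finite $R_{(\fJ)}$-module, while for $\ge$ you pass to a minimal prime $\fP$ maximizing $\dim(R/\fP)_{(\fJ)}$ and then exploit that multiplication by a nonzero element is injective in the domain $R/\fP$. What each approach buys: the paper's is more streamlined and avoids the reduction to a domain, at the price of needing an infinite field and the support formula for module Hilbert polynomials; yours is a bit more elementary (the comparisons reduce to univariate polynomial inequalities along a generic ray) and works without enlarging $\kk$, but requires the extra bookkeeping of the reduction $R\rightsquigarrow R/\fP$ and the monotonicity step $\deg(P_R;\fJ)\ge\deg(P_{R/\fP};\fJ)$. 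Your parenthetical alternative of coarsening to an $\NN^2$-grading and invoking \cite[Theorem~1.7]{TRUNG_POSITIVE} is legitimate but is exactly the bigraded statement the proposition is meant to extend.
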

\begin{proof}

We may assume that $(0:_R\nn^\infty)=0$ and $\kk$ is an infinite field by \autoref{modSat}. 
	Fix $\fJ=\{j_1,\ldots, j_k\}\subseteq [p]$ and let $w\in \NN$ be such that $\dim_\kk([R]_{\fn})=P_R(\fn)$ for every $\fn=(n_1,\ldots, n_p)\gs w\fone$.     
	Let $(d_1,\ldots. d_k)$ be such that $\delta:=\deg(P_R; \fJ)=d_1+\cdots+d_k$ and $t_{j_1}^{d_1}\cdots t_{j_k}^{d_k}$  divides a term of $P_R$. 
	
	Let $q$ be a polynomial in the variables $\{t_i\mid i\not\in \fJ \}$ such that $P_R-q\cdot t_{j_1}^{d_1}\cdots t_{j_k}^{d_k}$  has no term  divisible by $t_{j_1}^{d_1}\cdots t_{j_k}^{d_k}$. 
	Let $\fs=(s_i\mid i\not\in \fJ )\in \NN^{p-|\fJ|}$ be a vector of integers such that $\fs\gs w\fone$ and $q(\fs)\neq 0$. 
	Thus, if one evaluates $t_i=s_i$ in $P_R$ for every $i\not\in \fJ $ one obtains a polynomial $Q$ on the variables $t_{j_1},\cdots, t_{j_k}$ of degree $\delta$. 
	On the other hand, by \cite[Theorem 3.4]{MIXED_MULT}, for  $n_{j_1},\cdots, n_{j_k}\gs w$ this polynomial $Q$ coincides with the Hilbert polynomial of the $R_{(\fJ)}$-module generated by $[R]_{\fs'}$, where $\fs'_i=\fs_i$ if $i\not\in\fJ$ and  $\fs'_i=0$ otherwise. 
	Call this module $M$.
	
	Since $(0:_R\nn^\infty)=0$,  for every $1\le i\le p$ we have $\grade(\mm_i) \ge 1$, and then 
	there exist  elements $y_i \in [R]_{\ee_i}$ which are  non-zero-divisors (see, e.g., \cite[Lemma 1.5.12]{BRUNS_HERZOG}). 
	From the fact that $y_1^{\fs_1'}\cdots y_p^{\fs_p'} \in M$, it follows that $\Ann_{R_{(\fJ)}}(M)=0$.
	Therefore, $\delta= \dim\left(\Supp(M) \cap X_{(\fJ)}\right) = r(\fJ)$, by \cite[Theorem 3.4]{MIXED_MULT}, finishing the proof.
\end{proof}

In the following remark we gather some basic relations for the radicals of certain ideals.

\begin{remark}
	\label{rem_radicals}
	(i) Let $I, J, K \subset R$ be ideals. 
	If $J \subset \sqrt{K}$, then $I+J \subset \sqrt{I+K}$.
	In particular, if $\sqrt{J} = \sqrt{K}$, then $\sqrt{I+J} = \sqrt{I+K}$.
	
	\noindent
	(ii) For any element $x \in \mm_1$, since $(x:_R\mm_1^\infty)\mm_1^k \subset (x)$ for some $k >0$, it follows that $\sqrt{(x)} = \sqrt{(x:_R\mm_1^\infty)\mm_1^k} = \sqrt{(x:_R\mm_1^\infty)\cap \mm_1}$.
\end{remark}

If $\kk$ is an infinite field, then for each $1 \le i \le p$ we say that a property $\mathbf{P}$ is  satisfied by a \emph{general element} in the $\kk$-vector space $[R]_{\ee_i}$, if there exists a dense open subset $U$ of ${\left[R\right]}_{\ee_i}$ with the Zariski topology such that every element in $U$ satisfies the property $\mathbf{P}$.

The following three technical lemmas are important steps for the proof of \autoref{thm_projections}. 

\begin{lemma}
	\label{lem_radical_sats}
	Assume \autoref{setup_initial} with $\kk$ being an infinite field.
	Suppose that $R$ is a domain. 
	Let  $x \in [R]_{\ee_1}$ be a general element. 
	Then, we have the equality
	$
	\sqrt{\left(x:_{R}\nn^\infty\right)} = \sqrt{\left(x:_{R}\mm_1^\infty\right)}
	$.
\end{lemma}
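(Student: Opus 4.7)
The plan is to reduce the statement to a question about minimal primes of $(x)$. Since $\nn \subseteq \mm_1$, the containment $(x:_R \mm_1^\infty) \subseteq (x:_R \nn^\infty)$ is automatic, so only the reverse containment $\sqrt{(x:_R \nn^\infty)} \subseteq \sqrt{(x:_R \mm_1^\infty)}$ needs attention. For any ideal $\fb \subseteq R$ one has $\sqrt{(x:_R \fb^\infty)} = \bigcap\{\fP \mid \fP \in \Min(R/(x)),\; \fP \not\supseteq \fb\}$, so the equality reduces to the following claim: for a general $x \in [R]_{\ee_1}$, every minimal prime $\fP$ of $(x)$ with $\fP \supseteq \nn$ also satisfies $\fP \supseteq \mm_1$.

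Next, since $R$ is a domain and a general $x$ is nonzero, Krull's principal ideal theorem guarantees that every minimal prime $\fP$ of $(x)$ has height exactly one. The assumption $X = \multProj(R) \neq \emptyset$ in \autoref{setup_initial} ensures $\nn \neq 0$, which in the domain $R$ forces each $\mm_i$ to be nonzero as well. If $\fP \supseteq \nn = \mm_1 \cap \cdots \cap \mm_p$ and $\fP$ is prime, then $\fP \supseteq \mm_i$ for some $i \in [p]$; the only case that needs to be ruled out is when $\fP \supseteq \mm_i$ for some $i \neq 1$ while $\fP \not\supseteq \mm_1$.

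The key observation is that the collection of height-one primes $\mathfrak{Q}$ that contain some $\mm_i$ with $i \neq 1$ but do not contain $\mm_1$ is finite: any such $\mathfrak{Q}$ must be a minimal prime of the corresponding $\mm_i$ (since $\HT(\mathfrak{Q}) = 1$ and $\mm_i \neq 0$), and each $\mm_i$ has only finitely many minimal primes by Noetherianity. Enumerate this finite collection as $\mathfrak{Q}_1,\ldots,\mathfrak{Q}_t$. Because $\mathfrak{Q}_j \not\supseteq \mm_1$, the intersection $\mathfrak{Q}_j \cap [R]_{\ee_1}$ is a proper $\kk$-subspace of $[R]_{\ee_1}$, and since $\kk$ is infinite a general element $x \in [R]_{\ee_1}$ is nonzero and lies outside the finite union $\bigcup_{j=1}^t \bigl(\mathfrak{Q}_j \cap [R]_{\ee_1}\bigr)$.

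For such a general $x$, no $\mathfrak{Q}_j$ contains $x$, so no $\mathfrak{Q}_j$ can be a minimal prime of $(x)$; hence any minimal prime $\fP$ of $(x)$ with $\fP \supseteq \nn$ is forced to contain $\mm_1$, which completes the argument. The proof is conceptually short, and the main obstacle is not technical but rather the correct identification of the "bad" primes as a finite set to which prime avoidance over an infinite field can be applied; once this finiteness is pinned down, Krull's principal ideal theorem finishes the job.
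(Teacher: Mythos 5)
Your proof is correct and follows essentially the same route as the paper: both identify the ``bad'' primes as the finitely many minimal primes of the ideals $\mm_i$ ($i \neq 1$) not containing $\mm_1$, choose $x$ outside their union by prime avoidance over the infinite field, and use Krull's principal ideal theorem to force any minimal prime of $(x)$ containing some $\mm_i$ but not $\mm_1$ to lie in that finite set. The only cosmetic difference is that you phrase the conclusion via the description of $\sqrt{(x:_R\fb^\infty)}$ as an intersection of minimal primes of $(x)$, whereas the paper argues directly with $\Min(x:_R\mm_1^\infty)$.
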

\begin{proof}
	Since $(0:_R\nn^\infty)=0$, we have that $\HT(\mm_j) \ge 1$ for every $1\le j\le p$. 
	Consider the following finite set of prime ideals
	$$
	\mathfrak{S} = \big\lbrace \fP \in \Spec(R)  \mid \fP \in \Min(\mm_j) \text{ for some } 2 \le j \le p \text{ and } \fP \not\supseteq \mm_1 \big\rbrace.
	$$
	By using the Prime Avoidance Lemma and the fact that $\kk$ is infinite, for a general element $x \in [R]_{\ee_1}$ we have that $x \not\in \bigcup_{\fP \in \mathfrak{S}} \fP$.
	If $\fP \in \Min\left(x:_R\mm_1^\infty\right)$, then $\HT(\fP) \le 1$ by Krull's Principal Ideal Theorem, and so we would have that $\fP \in \mathfrak{S}$ whenever $ \fP \not\supseteq \mm_1$ and $\fP \supseteq \mm_j$ for some $2 \le j \le p$.
	Therefore, for any $\fP \in \Spec(R)$ and a general element $x \in [R]_{\ee_1}$, if $\fP \in \Min\left(x:_R\mm_1^\infty\right)$	 we get $\fP \supseteq (x:_R\nn^\infty) = (x:_R(\mm_1\cap \mm_2 \cap \cdots \cap \mm_p)^\infty)$; so,  $\sqrt{(x:_R\nn^\infty)} = \sqrt{(x:_R\mm_1^\infty)}$.
\end{proof}

The lemma below is necessary for some reduction arguments in \autoref{thm_projections}.

\begin{lemma}
	\label{lem_cut_project}
	Assume \autoref{setup_initial} with $\kk$ being an infinite field.
	Suppose that $R$ is a domain. 
	Let  $x \in [R]_{\ee_1}$ be a general element and set $Z = X \cap V(x) = \multProj(R/xR)$. 
	Then, for each $\fJ = \{1,j_2,\ldots,j_k\}\subseteq [p]$, the following statements hold:
	\begin{enumerate}[\rm (i)]		
		\item $\dim\left(\Pi_\fJ(Z)\right) = \dim\left(X_\fJ \cap V(x)\right)$, where  $X_\fJ \cap V(x) = \multProj\left(R_{(\fJ)}/xR_{(\fJ)}\right)$.
		
		\item $\dim(\Pi_\fL(Z)) = \dim\big(\Pi_\fL'(X_\fJ \cap V(x))\big)$, where $\mathfrak{L} = \fJ \setminus \{1\}$ and $\Pi_\fL'$ denotes the natural projection $\Pi_\fL' : \PP_\kk^{m_1} \times_\kk \PP_\kk^{m_{j_2}} \times_\kk \cdots \times_\kk \PP_\kk^{m_{j_k}} \rightarrow \PP_\kk^{m_{j_2}} \times_\kk \cdots \times_\kk \PP_\kk^{m_{j_k}}$.		
	\end{enumerate}
\end{lemma}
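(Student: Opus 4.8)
The plan is to prove both parts by passing to the coordinate ring $R$ and its sub-ring $R_{(\fJ)}$, cutting by the general linear form $x \in [R]_{\ee_1}$, and comparing the relevant $\multProj$ schemes via their defining ideals and the radical identities collected in \autoref{rem_radicals}. Throughout, recall that $R$ is a domain, $(0:_R\nn^\infty)=0$, so $\HT(\mm_j)\ge 1$ for all $j$, and for a general $x$ the ring $R/xR$ has dimension $\dim(R)-1$ by Krull's principal ideal theorem. For part (i), I would begin by identifying, via \autoref{lem_basics_projections}(ii) and \autoref{rem_isom_quotient_grad}, the scheme $\Pi_\fJ(Z)$ with $\multProj\big(R/((xR:_R\nn^\infty)+\sum_{j\notin\fJ}\mm_j)\big)$, since $Z=\multProj(R/xR)$ has coordinate ring $R/xR$ whose $\nn$-saturation governs the image of the projection. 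On the other hand $X_\fJ\cap V(x)=\multProj(R_{(\fJ)}/xR_{(\fJ)})$ by definition, and using \autoref{rem_isom_quotient_grad} the ring $R_{(\fJ)}/xR_{(\fJ)}$ is $R/(xR+\sum_{j\notin\fJ}\mm_j)$.

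**Key step for (i): comparing dimensions after saturation.** The dimension of a $\multProj$ is the dimension of the coordinate ring minus $\lvert\fJ\rvert$ once we have saturated against the appropriate irrelevant ideal (or, at the level of $\dim$, saturating does not change the dimension of the part that survives — more precisely $\dim\multProj(R/\aaa)=\dim(R/(\aaa:\nn_\fJ^\infty))-\lvert\fJ\rvert$ unless that is negative). So the content of (i) is the equality
$$
\dim\!\left(\frac{R_{(\fJ)}}{(xR_{(\fJ)}:_{R_{(\fJ)}}\nn_\fJ^\infty)}\right)\;=\;\dim\!\left(\frac{R_{(\fJ)}}{(xR_{(\fJ)}:_{R_{(\fJ)}}(\mm_1\cap R_{(\fJ)})^\infty)}\right).
$$
This is exactly the statement of \autoref{lem_radical_sats} applied to the domain $R_{(\fJ)}$ — note that $R_{(\fJ)}$ is again a standard multigraded domain over $\kk$ with its own irrelevant ideals $\mm_j\cap R_{(\fJ)}$ for $j\in\fJ$, that $1\in\fJ$, and that a general $x\in[R]_{\ee_1}=[R_{(\fJ)}]_{\ee_1}$ is a general element of the degree-$\ee_1$ piece of $R_{(\fJ)}$. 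Thus $\sqrt{(xR_{(\fJ)}:\nn_\fJ^\infty)}=\sqrt{(xR_{(\fJ)}:(\mm_1\cap R_{(\fJ)})^\infty)}$, and since radicals do not affect dimension, the two $\multProj$'s above have the same dimension. It remains to check that $\Pi_\fJ(Z)$, which lives in $\multProj(R_{(\fJ)}/xR_{(\fJ)})$, is cut out (up to radical, hence up to dimension) by the $\nn_\fJ$-saturation of $xR_{(\fJ)}$; this is where I expect to spend care — one must argue that the image ideal $(xR:\nn^\infty)\cap R_{(\fJ)}$ and $(xR_{(\fJ)}:\nn_\fJ^\infty)$ have the same radical, using $R=R_{(\fJ)}\oplus(\sum_{j\notin\fJ}\mm_j)$ and part (ii) of \autoref{rem_radicals} componentwise in each $\mm_j$, $j\notin\fJ$, together with \autoref{lem_radical_sats} to absorb the $\mm_j$ for $j\in\fJ\setminus\{1\}$.

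**Key step for (ii): projecting away the factor $\PP^{m_1}$.** For the second part, $\Pi_\fL(Z)$ with $\fL=\fJ\setminus\{1\}$ factors through $\Pi_\fJ(Z)\subseteq X_\fJ\cap V(x)$: indeed $\Pi_\fL=\Pi_\fL'\circ\Pi_\fJ$ on $X$ (composition of natural projections). So $\Pi_\fL(Z)=\Pi_\fL'(\Pi_\fJ(Z))$ as sets, and hence has the same dimension as $\Pi_\fL'(\Pi_\fJ(Z))$; I then want to upgrade $\Pi_\fJ(Z)$ to all of $X_\fJ\cap V(x)$. Since $\Pi_\fJ(Z)$ and $X_\fJ\cap V(x)$ have the same dimension by part (i), and $\Pi_\fJ(Z)$ is a closed subscheme of the latter, a general dimension-count shows $\Pi_\fL'$ sends them onto sets of the same dimension — more carefully, $\Pi_\fL'(\Pi_\fJ(Z))\subseteq\Pi_\fL'(X_\fJ\cap V(x))$ and the latter is the image of a $(d-1)$-dimensional scheme under a projection that drops dimension by at most... hmm, this inequality goes the wrong way, so instead I would argue algebraically: the coordinate rings involved are $R_{(\fL)}/((xR:\nn^\infty)\cap R_{(\fL)})$ versus $R_{(\fL)}/((xR_{(\fJ)}:\nn_\fJ^\infty)\cap R_{(\fL)})$, and again by \autoref{lem_radical_sats} applied inside $R_{(\fJ)}$ the ideals $(xR_{(\fJ)}:\nn_\fJ^\infty)$ and $(xR_{(\fJ)}:(\mm_1\cap R_{(\fJ)})^\infty)$ have equal radical, whence contracting to $R_{(\fL)}$ and taking $\multProj$ gives equal dimensions. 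Concretely: $\Pi_\fL'(X_\fJ\cap V(x))\cong\multProj\big(R_{(\fL)}/((xR_{(\fJ)}:\nn_\fJ^\infty)\cap R_{(\fL)})\big)$ by \autoref{lem_basics_projections}(ii) applied to $R_{(\fJ)}/xR_{(\fJ)}$ and the subset $\fL\subseteq\fJ$, while $\Pi_\fL(Z)\cong\multProj\big(R_{(\fL)}/((xR:\nn^\infty)\cap R_{(\fL)})\big)$; matching these up is the same radical bookkeeping as in part (i).

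**Where the difficulty lies.** The routine parts are the isomorphisms $R_{(\fJ)}\cong R/\sum_{j\notin\fJ}\mm_j$ and the formula $\dim\multProj = \dim(\text{saturated ring}) - |\fJ|$. The genuine obstacle is the radical/saturation bookkeeping: showing that the ideal governing $\Pi_\fJ(Z)$ inside $R_{(\fJ)}$ — namely the image of $(xR:_R\nn^\infty)$ — agrees up to radical with the $\nn_\fJ$-saturation, respectively the $(\mm_1\cap R_{(\fJ)})$-saturation, of $xR_{(\fJ)}$. The clean way through is: first reduce $(xR:\nn^\infty)$ to $(xR:\mm_1^\infty)$ up to radical by \autoref{lem_radical_sats} in $R$; then observe $(xR:\mm_1^\infty)$ is already ``saturated away from the other $\mm_j$'', so its contraction to $R_{(\fJ)}$ is $(xR_{(\fJ)}:(\mm_1\cap R_{(\fJ)})^\infty)$ up to radical (using $R=R_{(\fJ)}\oplus\bigoplus_{j\notin\fJ}[\,\cdot\,]$ to see that quotienting by $\sum_{j\notin\fJ}\mm_j$ commutes with $\mm_1$-saturation up to radical, since $\mm_1$ and $\mm_j$ involve disjoint sets of variables); finally re-saturate by $\nn_\fJ$, again harmless by \autoref{lem_radical_sats} now applied in the domain $R_{(\fJ)}$. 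The bidegree/multigrading must be tracked so that ``general $x\in[R]_{\ee_1}$'' means the same thing in $R$ and in $R_{(\fJ)}$, which it does because $[R]_{\ee_1}=[R_{(\fJ)}]_{\ee_1}$ when $1\in\fJ$.
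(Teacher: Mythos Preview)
Your proposal is correct and follows essentially the same route as the paper's proof: both parts are handled by applying \autoref{lem_radical_sats} twice (once in $R$ and once in the subring $R_{(\fJ)}$) and using \autoref{rem_radicals} to pass between the $\nn$-saturation and the $\mm_1$-saturation, with \autoref{rem_isom_quotient_grad} providing the bridge $R_{(\fJ)}\cong R/\sum_{j\notin\fJ}\mm_j$. One small sharpening worth noting: the step you flag as delicate --- matching $(xR:_R\mm_1^\infty)\cap R_{(\fJ)}$ with $(xR_{(\fJ)}:_{R_{(\fJ)}}(\mm_1\cap R_{(\fJ)})^\infty)$ --- is in fact an \emph{exact} equality, not just a radical one, because for $y\in R_{(\fJ)}$ one has $\mm_1^\ell y\subseteq xR$ if and only if $(\mm_1\cap R_{(\fJ)})^\ell y\subseteq xR_{(\fJ)}$ (using $1\in\fJ$ so that $xR\cap R_{(\fJ)}=xR_{(\fJ)}$); the paper records this directly as a ring isomorphism, which streamlines the bookkeeping. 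Your initial geometric attempt for (ii) via $\Pi_\fL=\Pi_\fL'\circ\Pi_\fJ$ indeed does not give the needed equality on its own, and your fallback to the algebraic radical argument is exactly what the paper does.
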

\begin{proof}
	For notational purposes, let $\bbb_\fJ := \mm_1 \cap R_{(\fJ)}$.
	
	(i) From \autoref{eq_isom} we have that
	$
	\Pi_{\fJ}(Z) \cong \multProj\big(R/\big((x:_R\nn^\infty)+\sum_{j\not\in \fJ}\mm_j\big)\big).
	$
	Since we are assuming $1 \in \fJ$, from \autoref{rem_isom_quotient_grad} we obtain the natural isomorphism
	$$
	R_{(\fJ)}/\left(x:_{R_{(\fJ)}} \bbb_\fJ^\infty\right) \;\xrightarrow{\cong} \; R/\Big((x:_R\mm_1^\infty)+\sum_{j\not\in \fJ}\mm_j\Big);
	$$
	indeed, for $\ell \ge 0$ and $y \in R_{(\fJ)}$, one notices that $\bbb_\fJ^\ell \cdot y \in xR_{(\fJ)}$ if and only if $\mm_1^\ell \cdot y \in xR$.

	By \autoref{lem_radical_sats} and  \autoref{rem_radicals}(i) we have    $\sqrt{(x:_R\mm_1^\infty)+\sum_{j\not\in \fJ}\mm_j}=\sqrt{(x:_R\nn^\infty)+\sum_{j\not\in \fJ}\mm_j}$, and  by applying \autoref{lem_radical_sats} to the ring $R_{(\fJ)}$ we obtain  $\sqrt{(x:_{R_{(\fJ)}}\bbb_\fJ^\infty)}=\sqrt{(x:_{R_{(\fJ)}}\nn_\fJ^\infty)} $. 
	It follows that 
	\begin{equation}
	\label{eq_radicals_fJ}
	R_{(\fJ)} \;\big/\; \sqrt{(x:_{R_{(\fJ)}}\nn_\fJ^\infty)} \;\cong\; R \;\big/\; \sqrt{(x:_R\nn^\infty)+\sum_{j\not\in \fJ}\mm_j},
	\end{equation}
	which gives the result.
	
	(ii) By using \autoref{eq_isom} we obtain that $\Pi_{\fL}(Z) \cong \multProj\big(R/\big((x:_R\nn^\infty)+\sum_{j\not\in \fJ}\mm_j + \mm_1\big)\big)$ and that
	$\Pi_{\fL}'\left(X_\fJ \cap V(x)\right)  \cong \multProj\left(R_{(\fJ)} /\left( (x:_{R_{(\fJ)}}\nn_\fJ^\infty) + \bbb_\fJ\right)\right)$. 
	Since the isomorphism in \autoref{eq_radicals_fJ} can be extended to 
	\begin{equation*}
	\label{eq_radicals_bJ}
	R_{(\fJ)} \;\Big/\; \left( \sqrt{(x:_{R_{(\fJ)}}\nn_\fJ^\infty)} + \bbb_\fJ\right) \;\cong\; R \;\Big/\; \left(\sqrt{(x:_R\nn^\infty)+\sum_{j\not\in \fJ}\mm_j} + \mm_1 \right),
	\end{equation*}
	the result follows from \autoref{rem_radicals}(i).
\end{proof}

We continue with the next auxiliary lemma that allows us to simplify the proof of \autoref{thm_projections}.

\begin{lemma}
	\label{lem_dim_projects}
	Assume \autoref{setup_initial} with $\kk$ being an infinite field.
	Suppose that $R$ is a domain and $r(1)\ge 1$. 
	Let  $x \in [R]_{\ee_1}$ be a general element and set $Z = X \cap V(x) = \multProj(R/xR)$. 
	Then, the following statements hold:
	\begin{enumerate}[\rm (i)]
		
		\item If $1 \in \fJ \subseteq [p]$, then $\dim\left(\Pi_\fJ(Z)\right) = r(\fJ) - 1$; in particular, $\dim(Z) = r - 1$. 
		
		\item If $1 \not\in \fJ$  and $r(\fK) > r(\fJ)$, where $\fK = \{1\} \,\cup\, \fJ \subseteq [p]$, then $\dim\left(\Pi_\fJ(Z)\right)=r(\fJ)$.		
	\end{enumerate}
\end{lemma}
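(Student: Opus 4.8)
The plan is to prove both parts by reducing to the one-graded (Trung-style) situation via the projection maps $\Pi_\fJ$, using the description of the image in \autoref{eq_isom} together with the radical computations of \autoref{lem_radical_sats} and \autoref{lem_cut_project}. Since $R$ is a domain with $(0:_R\nn^\infty)=0$, we have $\HT(\mm_j)\ge 1$ for all $j$; in particular $x\in[R]_{\ee_1}$ is a nonzerodivisor, so $\dim(R/xR) = \dim(R)-1$, which already gives $\dim(Z) = r-1$ once we know the saturation behaves well. The key input we want to exploit is \autoref{thm_deg_Hilb_pol}: for any standard multigraded domain $T$ with trivial $\nn$-saturation, $\deg(P_T;\fJ) = \dim(\Pi_\fJ(\multProj T))$. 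So everything reduces to tracking how the Hilbert polynomial, or rather its partial degrees, change when we pass from $R$ to $R/xR$ and then restrict variables.

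**Proof of (i).** Fix $\fJ\ni 1$. By \autoref{lem_cut_project}(i), $\dim(\Pi_\fJ(Z)) = \dim(X_\fJ\cap V(x))$ where $X_\fJ\cap V(x) = \multProj(R_{(\fJ)}/xR_{(\fJ)})$. Now $R_{(\fJ)}$ is again a standard multigraded domain (a subring of the domain $R$), and by \autoref{lem_basics_projections}(iv) its $\nn_\fJ$-saturation is zero; moreover $x\in[R_{(\fJ)}]_{\ee_1}$ is a nonzerodivisor in $R_{(\fJ)}$. The hypothesis $r(1)\ge 1$ says $\dim(\Pi_1(X))\ge 1$, i.e. by \autoref{thm_deg_Hilb_pol} the variable $t_1$ genuinely appears in $P_R$ to positive degree; the same then holds for $R_{(\fJ)}$ since $\Pi_{\{1\}}$ factors through $X_\fJ$, so $R_{(\fJ)}\neq [R_{(\fJ)}]_{(0,\ldots)}$ in the first slot and $\mm_1\cap R_{(\fJ)}$ has height $\ge 1$. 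Cutting the domain $R_{(\fJ)}$ by a general degree-$\ee_1$ element drops $\dim$ by exactly $1$, and — this is the point — it drops the partial degree of the Hilbert polynomial in every block of variables by exactly $1$, because a general hyperplane section of a multiprojective variety $W$ has $\Pi_\fL$-image of dimension $\dim\Pi_\fL(W)-1$ provided the $1$-direction is "active" on $W$. To make this precise I would apply \autoref{thm_deg_Hilb_pol} to $R_{(\fJ)}/xR_{(\fJ)}$ after checking its saturation is trivial (again via the radical identity $\sqrt{(x:_{R_{(\fJ)}}\nn_\fJ^\infty)} = \sqrt{xR_{(\fJ)}}$, which is \autoref{lem_radical_sats} applied inside $R_{(\fJ)}$ once we know $x$ is general enough), concluding $\dim(\Pi_\fL'(X_\fJ\cap V(x))) = r(\fL) - 1$ for the subset $\fL=\fJ\setminus\{1\}$ and then feeding this through \autoref{lem_cut_project}(ii). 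Taking $\fJ = [p]$ gives $\dim(Z) = r-1$.

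**Proof of (ii).** Now suppose $1\notin\fJ$ and $r(\fK) > r(\fJ)$ with $\fK = \{1\}\cup\fJ$. By \autoref{eq_isom}, $\Pi_\fJ(Z) \cong \multProj(R/((x:_R\nn^\infty) + \sum_{j\notin\fJ}\mm_j))$, and since $1\notin\fJ$ we have $\mm_1$ among the summands, so by \autoref{rem_radicals}(ii), $(x:_R\nn^\infty) + \sum_{j\notin\fJ}\mm_j$ has the same radical as $(x:_R\mm_1^\infty)+\sum_{j\notin\fJ}\mm_j$; but $x\in\mm_1$, so modulo $\mm_1$ the element $x$ becomes $0$ and this radical equals $\sqrt{\sum_{j\notin\fJ}\mm_j}$, giving $\Pi_\fJ(Z)_{\mathrm{red}} \cong \Pi_\fJ(X)_{\mathrm{red}}$ as soon as we verify the $x$-term is absorbed — the content here is exactly that cutting by $x\in\mm_1$ does \emph{not} see the $\fJ$-directions, so $\dim\Pi_\fJ(Z) = r(\fJ)$. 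The role of the hypothesis $r(\fK)>r(\fJ)$ is to guarantee that the general element $x$ really can be chosen with $x\notin$ (the relevant minimal primes over $\sum_{j\notin\fJ}\mm_j$ pulled back), i.e. that cutting is transversal to the $\fJ$-fibers; without it $x$ might lie in a component and the dimension could drop. Concretely I would argue: the minimal primes of $\sum_{j\notin\fJ}\mm_j$ in $R$ that do not contain $\mm_1$ correspond to the components of $\Pi_\fK(X)$ surjecting onto $\Pi_\fJ(X)$, and $r(\fK)>r(\fJ)$ forces at least one such component to have positive fiber dimension over $\Pi_\fJ(X)$; a general $x\in[R]_{\ee_1}$ then cuts this fiber properly, leaving the $\fJ$-image unchanged. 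Prime avoidance (using $\kk$ infinite) lets us pick such an $x$.

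**Main obstacle.** The genuinely delicate point is (ii): controlling that the general hyperplane $V(x)$ meets every relevant component of $X$ in the "right" way so that the $\fJ$-projection of $Z$ does not lose a dimension — equivalently, that the hypothesis $r(\fK)>r(\fJ)$ is exactly what rules out the bad case. I expect the cleanest route is to phrase it as: $\dim\Pi_\fJ(Z) \le r(\fJ)$ is automatic (it is a closed subscheme of $\Pi_\fJ(X)$), and for the reverse inequality, use that $r(\fK)>r(\fJ)$ means the generic fiber of $\Pi_\fK(X)\to\Pi_\fJ(X)$ in the $t_1$-direction has dimension $\ge 1$, so a general section in that direction still dominates $\Pi_\fJ(X)$ — this is a standard fiber-dimension/generic-flatness argument over the infinite field $\kk$, combined once more with \autoref{thm_deg_Hilb_pol} to convert "dominates" into an equality of partial degrees. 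Part (i), by contrast, is a more mechanical bookkeeping with \autoref{lem_cut_project} and \autoref{lem_radical_sats}.
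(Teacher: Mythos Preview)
Your argument for (ii) contains a genuine gap. You write that since $x\in\mm_1$, ``modulo $\mm_1$ the element $x$ becomes $0$ and this radical equals $\sqrt{\sum_{j\notin\fJ}\mm_j}$''. But the ideal in play is $(x:_R\mm_1^\infty)$, not $(x)$, and there is no reason the saturation $(x:_R\mm_1^\infty)$ should lie in $\sqrt{\mm_1}$ just because $x$ does. Indeed, if $\HT(\mm_1)=1$ then a minimal prime over $(x)$ can contain $\mm_1$, and then $(x:_R\mm_1^\infty)$ picks up a component strictly larger than $(x)$ that is not contained in $\mm_1$. Your later attempt to use $r(\fK)>r(\fJ)$ as a prime-avoidance condition on the choice of $x$ does not repair this: the issue is not whether $x$ lies in certain primes, but whether the $\mm_1$-saturation of $(x)$ does.

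The paper uses the hypothesis $r(\fK)>r(\fJ)$ in a different and sharper way. After reducing via \autoref{lem_cut_project}(ii) to $\fK=[p]$, one has
\[
\HT(\mm_1)=\dim(R)-\dim(R_{(\fJ)})=(r+p)-(r(\fJ)+p-1)=r(\fK)-r(\fJ)+1\ge 2.
\]
Now Krull's Principal Ideal Theorem forces every minimal prime of $(x)$ to have height $\le 1$, hence none of them contains $\mm_1$, and therefore $\sqrt{(x)}=\sqrt{(x:_R\mm_1^\infty)}$. Only then does ``$x\in\mm_1$'' give $\sqrt{(x:_R\mm_1^\infty)+\mm_1}=\sqrt{(x)+\mm_1}=\sqrt{\mm_1}$, and the dimension computation follows. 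This height-two step is the crux, and it is missing from your proposal.

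Your treatment of (i) is also somewhat tangled: you invoke \autoref{lem_cut_project}(ii) and the set $\fL=\fJ\setminus\{1\}$, but neither is needed. After reducing via \autoref{lem_cut_project}(i) to the ring $R_{(\fJ)}$, one only has to show $\dim\big(\multProj(R_{(\fJ)}/xR_{(\fJ)})\big)=r(\fJ)-1$. The short exact sequence $0\to R_{(\fJ)}(-\ee_1)\xrightarrow{x}R_{(\fJ)}\to R_{(\fJ)}/xR_{(\fJ)}\to 0$ gives $P_{R_{(\fJ)}/xR_{(\fJ)}}(\ttt)=P_{R_{(\fJ)}}(\ttt)-P_{R_{(\fJ)}}(\ttt-\ee_1)$, which is nonzero since $\deg(P_{R_{(\fJ)}};t_1)=r(1)\ge 1$; hence $(x:_{R_{(\fJ)}}\nn_\fJ^\infty)$ is proper, and Krull gives height exactly $1$. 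Your claim that \autoref{lem_radical_sats} yields $\sqrt{(x:_{R_{(\fJ)}}\nn_\fJ^\infty)}=\sqrt{xR_{(\fJ)}}$ is also not what that lemma says; it only identifies the $\nn$-saturation with the $\mm_1$-saturation up to radical.
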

\begin{proof}	
(i) First, from \autoref{lem_cut_project}(i) it suffices to compute $\dim\left(X_\fJ \cap V(x)\right)$, where  $X_\fJ \cap V(x) = \multProj\left(R_{(\fJ)}/xR_{(\fJ)}\right)$.
	For $\fJ = \{1, j_2,\ldots,j_k\} \subseteq [p]$, note that $\Pi_1(X) \cong \Pi_1'(X_\fJ)$, where $\Pi_1'$ denotes the natural projection $\Pi_1':\PP_\kk^{m_1}  \times_\kk\PP_\kk^{m_{j_2}} \times_\kk \cdots \times_\kk \PP_\kk^{m_{j_k}} \rightarrow \PP_\kk^{m_1}$.
	Therefore, neither the assumption nor the conclusion changes if we substitute $R$ by $R_{(\fJ)}$ and $X$ by $X_\fJ$, and we do so.

	From the short exact sequence 
	$$
	0 \rightarrow R(-\ee_1) \xrightarrow{x} R \rightarrow R/xR \rightarrow 0,
	$$
	we obtain $P_{R/xR}(\ttt) = P_R(\ttt) - P_R(\ttt - \ee_1)$.
	By using \autoref{thm_deg_Hilb_pol}, $\deg(P_R;t_1) = r(1) \ge 1$ and so $P_R$ is non-constant as a univariate polynomial in the variable $t_1$.
	Thus, $P_{R/xR}(\ttt) \neq 0$ which implies that $(x:_R\nn^\infty)$ is a proper ideal.
	So, Krull's Principal Ideal Theorem yields that $\HT(x:_R\nn^\infty)=1$ and that
	$$
	\dim(Z) = \dim(R/(x:_R\nn^\infty)) - p = \dim(R) - 1 - p = (r+p)-1-p=r-1.
	$$


	(ii) By using \autoref{lem_cut_project}(ii), we can substitute $R$ by $R_{(\fK)}$ and $X$ by $X_\fK$, and we do so.
	So, we may assume that $\fK = [p]$ and $\fJ = \{2,\ldots,p\}$.
	From \autoref{eq_isom} we get the isomorphism 
	\begin{equation}
	\label{eq_second_project_Z}
	\Pi_\fJ(Z) \cong \multProj\Big(R/\left((x:_R\nn^\infty)+\mm_1\right)\Big).
	\end{equation}
	The equality 
	\begin{equation}
	\label{eq_radicals_project}
	\sqrt{(x:_R\nn^\infty) + \mm_1} = \sqrt{(x:_R\mm_1^\infty) + \mm_1}
	\end{equation}
	follows from \autoref{lem_radical_sats} and \autoref{rem_radicals}(i).
	The assumption yields that 
	$$
	\HT(\mm_1)=\dim\left(R\right)-\dim\left(R_{(\fJ)}\right)=(r+p)-(r(\fJ)+p-1)\ge 2,
	$$ 
	then as a consequence Krull's Principal Ideal Theorem it follows that $\sqrt{(x)} = \sqrt{(x:_R\mm_1^\infty)}$; therefore, \autoref{rem_radicals}(i) implies that  $\sqrt{(x:_R\mm_1^\infty) + \mm_1}=\sqrt{\mm_1}$.
	By summing up, we obtain the equalities $\dim\big(R/\left((x:_R\nn^\infty)+\mm_1\right)\big)=\dim(R/\mm_1)=r(\fJ) +p-1$, and so the result follows.
	\end{proof}

The next important theorem computes the dimension of the image of the projections $\Pi_\fJ$ after cutting with a general hyperplane under certain conditions.   For the proof of this result, we need the following version of Grothendieck's Connectedness Theorem. For that, we  recall the definitions
\begin{align*}
&c(R) := \min\big\{ \dim(R/\aaa) \mid \aaa \subset R \text{ is an ideal and } \Spec(R) \setminus V(\aaa) \text{ is disconnected} \big\},\\
&\sdim(R) := \min\big\{\dim(R/\fP) \mid \fP \in \Min(R) \big\} \text{ and }\\
&\ara(\aaa) :=\min\{n \mid \sqrt{(a_1,\ldots,a_n)} = \sqrt{\aaa} \text{ and } a_i \in R \}
\end{align*}
for any ideal $\aaa \subset R$.

\begin{lemma}[{\cite[Proposition 2.1]{BRODMANN_RUNG},  \cite[Lemma 2.6]{TRUNG_POSITIVE}}]
	\label{lem_dimension_connect}
	For two proper homogeneous ideals $\aaa,\bbb \subset R$, if $\min\{\dim(R/\aaa), \dim(R/\bbb)\} > \dim(R/(\aaa+\bbb))$, then 
	$$
	\dim(R/(\aaa+\bbb)) \ge \min\{ c(R), \;\sdim(R)-1 \} - \ara(\aaa \cap \bbb).
	$$
\end{lemma}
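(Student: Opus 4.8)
The plan is to derive the inequality from Grothendieck's Connectedness Theorem by a disconnection argument (we may assume $\aaa+\bbb$ is a proper ideal, as that is the case in which the statement is used). Write $\cc := \aaa \cap \bbb$, so that inside $\Spec(R)$ one has $V(\cc) = V(\aaa) \cup V(\bbb)$ and $V(\aaa+\bbb) = V(\aaa) \cap V(\bbb)$. The hypothesis $\min\{\dim(R/\aaa),\dim(R/\bbb)\} > \dim(R/(\aaa+\bbb))$ forces $V(\aaa) \not\subseteq V(\aaa+\bbb)$ and $V(\bbb) \not\subseteq V(\aaa+\bbb)$. Hence the sets $V(\aaa) \setminus V(\aaa+\bbb)$ and $V(\bbb) \setminus V(\aaa+\bbb)$ are both nonempty; they are disjoint, their intersection being $V(\aaa+\bbb) \setminus V(\aaa+\bbb) = \emptyset$; and each is closed in
$$
\Spec(R/\cc) \setminus V\big((\aaa+\bbb)/\cc\big) \;=\; V(\cc) \setminus V(\aaa+\bbb),
$$
because $V(\aaa)$ and $V(\bbb)$ are closed in $V(\cc)$. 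Thus they exhibit $\Spec(R/\cc) \setminus V((\aaa+\bbb)/\cc)$ as disconnected, and since $(\aaa+\bbb)/\cc$ is a proper ideal of $R/\cc$, the definition of the connectedness dimension $c(-)$ gives
$$
\dim\big(R/(\aaa+\bbb)\big) \;=\; \dim\Big((R/\cc)\big/\big((\aaa+\bbb)/\cc\big)\Big) \;\ge\; c(R/\cc).
$$

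Next I would bound $c(R/\cc)$ from below. Put $s := \ara(\cc)$ and choose homogeneous elements $a_1,\dots,a_s \in R$ with $\sqrt{(a_1,\dots,a_s)} = \sqrt{\cc}$. Since the Krull dimension, the invariant $\sdim(-)$, and whether $\Spec(R/I)\setminus V(J/I)$ is disconnected (for ideals $I \subseteq J$) all depend only on the underlying reduced topological space, one has $c(R/\cc) = c\big(R/(a_1,\dots,a_s)\big)$. Now I would invoke the graded form of Grothendieck's Connectedness Theorem: for a homogeneous element $a$ of a graded ring $R'$ of the type under consideration, $c(R'/aR') \ge \min\{c(R'),\sdim(R')-1\}-1$, while $\sdim(R'/aR') \ge \sdim(R')-1$ by Krull's Principal Ideal Theorem applied to each minimal prime of $R'$. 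An immediate induction on $s$ from these two inequalities yields $c\big(R/(a_1,\dots,a_s)\big) \ge \min\{c(R),\sdim(R)-1\}-s$. Combining with the previous display,
$$
\dim\big(R/(\aaa+\bbb)\big) \;\ge\; c(R/\cc) \;=\; c\big(R/(a_1,\dots,a_s)\big) \;\ge\; \min\{c(R),\sdim(R)-1\} - \ara(\cc),
$$
which is the assertion.

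The step that genuinely requires care is the appeal to Grothendieck's Connectedness Theorem, which is classically stated for complete local rings, whereas here $R$ is a finitely generated standard $\NN^p$-graded algebra over a field (or, in the setting of \autoref{setup_Artinian}, over an Artinian local ring). The graded version needed is exactly \cite[Proposition 2.1]{BRODMANN_RUNG} (compare the local argument of \cite[Lemma 2.6]{TRUNG_POSITIVE}), so in practice one simply cites it. The only alternative I see is to reduce to the classical case by completing $R$ at its homogeneous maximal ideal --- legitimate since that completion is a quotient of a regular local ring --- and then checking that $\dim$, $\sdim$, and the relevant (dis)connectedness statements are preserved under this passage; that bookkeeping is the sole point that is not purely formal.
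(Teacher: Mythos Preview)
The paper does not supply its own proof of this lemma: it is stated with the citations \cite[Proposition~2.1]{BRODMANN_RUNG} and \cite[Lemma~2.6]{TRUNG_POSITIVE} and then used as a black box in the proof of \autoref{thm_projections}. Your proposal is a correct reconstruction of the standard argument behind those references --- the disconnection of $\Spec(R/\cc)\setminus V((\aaa+\bbb)/\cc)$ to obtain $\dim(R/(\aaa+\bbb))\ge c(R/\cc)$, followed by the inductive application of Grothendieck's Connectedness Theorem to bound $c(R/(a_1,\dots,a_s))$ --- so there is nothing to compare beyond noting that you have filled in exactly what the paper chose to cite.
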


We are now ready to present the following theorem.

\begin{theorem}
	\label{thm_projections}
	Assume \autoref{setup_initial} with $\kk$ being an infinite field. 
	Suppose that $R$ is a domain and $r(1) \ge 1$.
	Let $x \in [R]_{\ee_1}$  be a general element and set $Z = X \cap V(x) = \multProj(R/xR)$.
	Then, for each $\fJ \subseteq [p]$ we have that 
	$$
	\dim(\Pi_\fJ(Z)) = 
	\min\Big\{ r(\fJ),\; r\big(\fJ \cup \{1\}\big)-1\Big\}. 
	$$
\end{theorem}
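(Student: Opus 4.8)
The plan is to establish the identity by proving a uniform upper bound and then matching it with three cases for the lower bound, two of which are already delivered by \autoref{lem_dim_projects}. Note first that since $R$ is a domain and $X=\multProj(R)\ne\emptyset$, we automatically have $(0:_R\nn^\infty)=0$, so the dimension formulas of \autoref{lem_basics_projections} and the isomorphism \autoref{eq_isom} apply without saturation corrections, and $r=\dim X\ge\dim\Pi_1(X)=r(1)\ge1$, hence $\dim Z=r-1\ge0$ and $Z\ne\emptyset$. Write $\fK:=\fJ\cup\{1\}$; since $\Pi_\fK(X)\twoheadrightarrow\Pi_\fJ(X)$ is surjective we have $r(\fJ)\le r(\fK)$. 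For the upper bound, factor $\Pi_\fJ$ as $X\xrightarrow{\Pi_\fK}X_\fK\to X_\fJ$, so that $\Pi_\fJ(Z)$ is the image of $\Pi_\fK(Z)$; because $1\in\fK$, \autoref{lem_dim_projects}(i) gives $\dim\Pi_\fK(Z)=r(\fK)-1$, whence $\dim\Pi_\fJ(Z)\le r(\fK)-1$, and combined with the trivial bound $\dim\Pi_\fJ(Z)\le\dim\Pi_\fJ(X)=r(\fJ)$ this yields $\dim\Pi_\fJ(Z)\le\min\{r(\fJ),\,r(\fK)-1\}$ for every $\fJ$.

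It remains to prove the reverse inequality. If $1\in\fJ$ then $\fK=\fJ$ and \autoref{lem_dim_projects}(i) gives $\dim\Pi_\fJ(Z)=r(\fJ)-1=\min\{r(\fJ),r(\fK)-1\}$, so we may assume $1\notin\fJ$. If moreover $r(\fK)>r(\fJ)$, then $\min\{r(\fJ),r(\fK)-1\}=r(\fJ)$ and \autoref{lem_dim_projects}(ii) gives exactly $\dim\Pi_\fJ(Z)=r(\fJ)$. The only remaining case — and the crux — is $1\notin\fJ$ together with $r(\fK)=r(\fJ)$, where the task is to show $\dim\Pi_\fJ(Z)\ge r(\fJ)-1$.

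For this case I would first invoke \autoref{lem_cut_project}(ii) to replace $(R,X)$ by $(R_{(\fK)},X_\fK)$: this keeps $\dim\Pi_\fJ(Z)$, $r(\fJ)$, $r(1)$, the domain hypothesis, and the equality $r(\fK)=r(\fJ)$, and reduces us to $\fK=[p]$, $\fJ=\{2,\dots,p\}$, $\sum_{j\notin\fJ}\mm_j=\mm_1$. Now $r(\fK)=r(\fJ)$ reads $\dim(R/\mm_1)=\dim R-1$, i.e.\ $\HT(\mm_1)=1$ (using $R_{(\fJ)}\cong R/\mm_1$ from \autoref{rem_isom_quotient_grad} and \autoref{lem_basics_projections}). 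By \autoref{eq_isom}, $\Pi_\fJ(Z)\cong\multProj\!\big(R/((x:_R\nn^\infty)+\mm_1)\big)$, and \autoref{lem_radical_sats} with \autoref{rem_radicals}(i) identify the radical of this ideal with $\sqrt{(x:_R\mm_1^\infty)+\mm_1}$. The key estimate is
\[
\dim\Big(R\big/\big((x:_R\mm_1^\infty)+\mm_1\big)\Big)\;\ge\;\dim(R/\mm_1)-1,
\]
which I would get from Grothendieck's connectedness theorem (\autoref{lem_dimension_connect}) applied to $\aaa:=(x:_R\mm_1^\infty)$ and $\bbb:=\mm_1$: since $\Pi_\fJ(Z)\ne\emptyset$ we get $\aaa+\bbb\ne R$ and in particular $\aaa\ne R$, so (the minimal primes of $\aaa$ being among those of the principal ideal $(x)$, which has pure height $1$ in the domain $R$) $\dim R/\aaa=\dim R/\bbb=\dim R-1$; by \autoref{rem_radicals}(ii) $\sqrt{\aaa\cap\bbb}=\sqrt{(x)}$, so $\ara(\aaa\cap\bbb)=1$; and $R$ being a domain gives $\min\{c(R),\sdim(R)-1\}=\dim R-1$. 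Thus either $\dim R/(\aaa+\bbb)=\dim R-1$ directly, or the connectedness theorem forces $\dim R/(\aaa+\bbb)\ge(\dim R-1)-1$, so the displayed bound holds in all cases.

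Finally one converts this Krull-dimension bound into the statement about the multiprojective image: one must check that a top-dimensional minimal prime of $(x:_R\mm_1^\infty)+\mm_1$ can be chosen so as not to lie in the irrelevant locus of the $\fJ$-coordinates (i.e.\ does not contain $\nn_\fJ$), so that passing to $\multProj$ costs exactly $p-1$ in dimension; together with the upper bound this gives $\dim\Pi_\fJ(Z)=r(\fJ)-1$. I expect this last conversion to be the main obstacle — it is the only place where the interplay between ordinary dimension and multigraded $\multProj$-dimension is genuinely used, and it requires ruling out the degenerate possibility that every top-dimensional component of $V\big((x:_R\mm_1^\infty)+\mm_1\big)$ is "at infinity" in some coordinate of $\fJ$; the very small-dimensional cases (e.g.\ $r(\fJ)\le1$) should be disposed of separately by hand. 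Everything else is bookkeeping with \autoref{eq_isom}, \autoref{rem_isom_quotient_grad}, and the radical identities already in place.
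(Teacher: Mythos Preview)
Your overall strategy is exactly the paper's: the same upper bound via factoring through $\Pi_\fK$, the same two easy lower-bound cases via \autoref{lem_dim_projects}, the same reduction to $\fK=[p]$ via \autoref{lem_cut_project}(ii), and the same appeal to Grothendieck connectedness (\autoref{lem_dimension_connect}) with $\aaa=(x:_R\mm_1^\infty)$, $\bbb=\mm_1$, $\ara(\aaa\cap\bbb)=1$ from \autoref{rem_radicals}(ii). Your case split (``either $\dim R/(\aaa+\bbb)=\dim R-1$ already, or the hypothesis of \autoref{lem_dimension_connect} holds'') is a slight variation on the paper, which instead proves the strict inequality $\dim R/(\aaa+\bbb)\le r+p-2$ directly; both are fine.

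The one place you diverge is the ``conversion'' you flag as the main obstacle, and there you are worrying about a non-issue. Write $R':=R/(x:_R\nn^\infty)$; by construction $(0:_{R'}\nn^\infty)=0$. By \autoref{rem_isom_quotient_grad} one has $R'_{(\fJ)}\cong R'/\mm_1R'=R/\big((x:_R\nn^\infty)+\mm_1\big)=S'$, and then \autoref{lem_basics_projections}(iv) applied to $R'$ gives $(0:_{S'}\nn_\fJ^\infty)=0$. Hence \autoref{lem_basics_projections}(i) yields $\dim\Pi_\fJ(Z)=\dim\multProj(S')=\dim S'-(p-1)$ on the nose, and your inequality $\dim S'\ge r+p-2$ translates immediately into $\dim\Pi_\fJ(Z)\ge r-1$. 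There is no need to locate a top-dimensional prime avoiding $\nn_\fJ$, and no separate treatment of small $r(\fJ)$ is required. (The paper leaves this step implicit too, just asserting ``the proof would be complete if we prove the inequality $\dim\big(R/\left((x:_R\mm_1^\infty)+\mm_1\right)\big)\ge r+p-2$'', so it is easy to miss.)
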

\begin{proof}
For each $\fJ=\{j_1,\ldots,j_k\} \subseteq \fK = \{h_1,\ldots,h_\ell\} \subseteq [p]$ we have that $\Pi_\fJ(Z) =  \Pi_\fJ^\prime(\Pi_\fK(Z))$ where $\Pi_\fJ^\prime$ denotes the natural projection $\Pi_\fJ^\prime : \PP_\kk^{m_{h_1}} \times_\kk \cdots \times_\kk \PP_\kk^{m_{h_\ell}} \rightarrow \PP_\kk^{m_{j_1}} \times_\kk \cdots \times_\kk \PP_\kk^{m_{j_k}}$.	
	So, from \autoref{lem_dim_projects}(i) it follows that the inequality ``$\le$'' holds in the desired equality.
	
	Due to \autoref{lem_dim_projects}, in order to show the reversed inequality ``$\ge$'', it suffices to show that $\dim(\Pi_\fJ(Z)) \ge r(\fJ)-1$ when $1 \not\in \fJ$  and $r(\fK) = r(\fJ)$, where $\fK = \{1\} \,\cup\, \fJ \subseteq [p]$. 
	By  using \autoref{lem_cut_project}(ii), we assume may that $\fK = [p]$ and $\fJ = \{2,\ldots,p\}$.
	From \autoref{eq_second_project_Z} and \autoref{eq_radicals_project}, the proof would be complete if we prove the inequality $\dim\big(R/\left((x:_R\mm_1^\infty)+\mm_1\right)\big) \ge (r_\fJ-1)+(p-1)=r+p-2$.
	
	By using  	\autoref{lem_radical_sats} and 
	\autoref{lem_dim_projects}(i) we obtain that 
	$$
	\dim\left(R/(x:_R\mm_1^\infty)\right) = \dim\left(R/(x:_R\nn^\infty)\right) = (r-1)+p = r+p-1,
	$$
	and since $r(\fJ)=r$, we have $$\dim(R/\mm_1) =\dim(R_{(\fJ)}) = r(\fJ)+(p-1)= r +(p-1)=r+p-1.$$ 
	  Moreover, \autoref{eq_radicals_project} and  \autoref{lem_dim_projects}(i)  yield that
$$
	\dim\big(R/\left((x:_R\mm_1^\infty)+\mm_1\right)\big) = \dim\big(R/\left((x:_R\nn^\infty)+\mm_1\right)\big) \le (r-1)+(p-1)=r+p-2.
	$$
	Since $x \in \mm_1$, \autoref{rem_radicals}(ii) gives that $\ara\left((x:_R\mm_1^\infty)\cap \mm_1\right) = \ara\left((x)\right)=1$.
	As $R$ is a domain, $c(R)=\sdim(R)=r+p$.
	Therefore, from \autoref{lem_dimension_connect}  we obtain that 
	$$
	\dim\big(R/\left((x:_R\mm_1^\infty)+\mm_1\right)\big) \ge \min\{ r+p, (r+p)-1 \} - 1 = r+p-2.
	$$
	So, the proof is complete.
\end{proof}


\begin{notation}
	\label{nota_generic}
	Let $\{x_0,\ldots,x_s\}$ be a basis of the $\kk$-vector space $[R]_{\ee_1}$.
	Consider a purely transcendental field extension $\LL := \kk(z_0,\ldots,z_s)$ of $\kk$, and set $R_\LL := R \otimes_\kk \LL$ and $X_\LL := X \otimes_\kk \LL = \multProj\left(R_\LL\right) \subseteq \PP \otimes_\kk \LL = \PP_\LL^{m_1} \times_\LL \cdots \times_\LL \PP_\LL^{m_p}$.
	We say that $z := z_0x_0 + \cdots + z_sx_s \in {\left[R_\LL\right]}_{\ee_1}$ is the \emph{generic element} of ${\left[R_\LL\right]}_{\ee_1}$.
\end{notation}

In the following remark we explain that field extensions as in \autoref{nota_generic} preserve the domain assumption.

\begin{remark}
	\label{rem_infinite_field}
	Suppose that $R$ is a domain and consider a purely transcendental field extension $\kk(\xi)$. Then, $R \otimes_\kk \kk(\xi)$ is also a domain; indeed, one can see that $R \otimes_\kk \kk(\xi)$ is a subring of the field of fractions $\Quot(R[\xi])$ of the polynomial ring $R[\xi]$.
	So, when $R$ is a domain one can extend $\kk$ to an infinite field without loosing the assumption of $R$ being a domain.    
\end{remark}

The lemma below shows that the Hilbert function modulo a generic element coincides with the one module a general element.

\begin{lemma}
	\label{lem_generic}
	Assume \autoref{nota_generic} with  $\kk$ being an infinite field.   Let $x \in {[R]}_{\ee_1}$  be  a general element, then  
		$$
		\dim_\kk\big(\left[R/xR\right]_\nu\big) = \dim_\LL\big(\left[R_\LL/zR_\LL\right]_\nu\big)
		$$
		for all $\nu \in \NN^p$.
\end{lemma}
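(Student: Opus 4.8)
The crux is to exhibit one nonempty — hence dense — Zariski-open subset of $[R]_{\ee_1}$ on which the desired equality of Hilbert functions holds simultaneously for \emph{all} $\nu \in \NN^p$; this is precisely what generic freeness is for, and the plan is to package everything into a single application of it. Write $s+1 := \dim_\kk [R]_{\ee_1}$, let $x_0,\dots,x_s$ be the basis fixed in \autoref{nota_generic}, and set $D := \kk[z_0,\dots,z_s]$, a polynomial ring with $\Quot(D)=\LL$. View $R\otimes_\kk D$ as a finitely generated standard $\NN^p$-graded $D$-algebra with $D$ in degree $\mathbf 0$, put $z = z_0x_0+\cdots+z_sx_s \in [R\otimes_\kk D]_{\ee_1}$, and define the graded $D$-algebra $\widetilde{R} := (R\otimes_\kk D)/z(R\otimes_\kk D)$. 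Each component $[\widetilde R]_\nu$ is a finitely generated $D$-module (a quotient of the free module $[R]_\nu\otimes_\kk D$), and, since the grading is $D$-linear, taking $\nu$-th components commutes with base change along $D$.

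Two base changes matter. Along $D\hookrightarrow\LL$ one gets $\widetilde R\otimes_D\LL \cong R_\LL/zR_\LL$, so $[\widetilde R]_\nu\otimes_D\LL \cong [R_\LL/zR_\LL]_\nu$. For $\fa=(a_0,\dots,a_s)\in\kk^{s+1}$ put $\mathfrak{m}_\fa := (z_0-a_0,\dots,z_s-a_s)$, a maximal ideal of $D$ with $D/\mathfrak{m}_\fa\cong\kk$; then $\widetilde R\otimes_D D/\mathfrak{m}_\fa \cong R/x_\fa R$ with $x_\fa := a_0x_0+\cdots+a_sx_s$, whence $[\widetilde R]_\nu\otimes_D D/\mathfrak{m}_\fa \cong [R/x_\fa R]_\nu$. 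Now apply the graded form of the Generic Freeness Lemma (Grothendieck): since $D$ is a Noetherian domain and $\widetilde R$ is a finitely generated $\NN^p$-graded $D$-algebra with $[\widetilde R]_{\mathbf 0}=D$, there is $0\neq f\in D$ such that $[\widetilde R]_\nu\otimes_D D_f$ is a free $D_f$-module for every $\nu$, of rank equal to $\dim_\LL([\widetilde R]_\nu\otimes_D\LL)=\dim_\LL([R_\LL/zR_\LL]_\nu)$. Because $\kk$ is infinite and $f\neq 0$, the set $U := \{\,x_\fa \mid \fa\in\kk^{s+1},\ f(\fa)\neq 0\,\}$ is a nonempty, hence dense, Zariski-open subset of $[R]_{\ee_1}$. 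For $x=x_\fa\in U$ we have $f\notin\mathfrak{m}_\fa$, so reducing the free $D_f$-module $[\widetilde R]_\nu\otimes_D D_f$ modulo $\mathfrak{m}_\fa$ yields
$$
\dim_\kk\big([R/xR]_\nu\big)=\rank_{D_f}\big([\widetilde R]_\nu\otimes_D D_f\big)=\dim_\LL\big([R_\LL/zR_\LL]_\nu\big)
$$
for every $\nu\in\NN^p$; as this holds for all $x$ in the fixed dense open $U$, the lemma follows. (If $[R]_{\ee_1}=0$ the statement is trivial, with $\LL=\kk$ and $z=x=0$.)

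I expect the one genuinely delicate point to be the appeal to the \emph{graded} form of generic freeness, i.e.\ getting a single $f$ that works for all the infinitely many $\nu$ at once. A pointwise approach — for fixed $\nu$, the rank of multiplication by $\sum_i a_i x_i \colon [R]_{\nu-\ee_1}\to[R]_\nu$ is maximal on a dense open $U_\nu$, and there it agrees with the generic rank computed over $\LL$ — only produces $\bigcap_\nu U_\nu$, and over an infinite but countable field (say $\overline{\mathbb Q}$) a countable intersection of dense Zariski-opens can be empty, so the uniformity really does require the global statement. The remaining ingredients (flat base change for $\kk$- and $\LL$-dimensions, the identification of the two fibers of $\widetilde R$, density of $U$ for infinite $\kk$) are routine.
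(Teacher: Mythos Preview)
Your proof is correct and follows essentially the same approach as the paper's: both set up the universal family $\widetilde R=(R\otimes_\kk D)/z(R\otimes_\kk D)$ over $D=\kk[z_0,\dots,z_s]$, invoke Grothendieck's Generic Freeness Lemma to obtain a single nonzero $f\in D$ over which the family becomes free (or, equivalently, has graded pieces of constant fiber rank), and then compare the generic fiber $R_\LL/zR_\LL$ with the closed fibers $R/x_\fa R$ for $\fa$ in the dense open $D(f)\cap\kk^{s+1}$. Your explicit emphasis on why a \emph{single} $f$ (rather than one $f_\nu$ per degree) is essential is a nice addition not spelled out in the paper.
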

\begin{proof}
	Let $T$ be the polynomial ring $T = \kk[z_0,\ldots,z_s]$ and consider the finitely generated $T$-algebra given by $S = \left(R \otimes_\kk T\right)/w\left(R \otimes_\kk T\right)$ where $w = z_0x_0+\cdots+z_sx_s \in R \otimes_\kk T$.
	From the Grothendieck's Generic Freeness Lemma (see, e.g., \cite[Theorem 24.1]{MATSUMURA}, \cite[Theorem 14.4]{EISEN_COMM}) there exists an element $0 \neq a \in T$ such that $S_a$ is a free $T_a$-module.
	Hence, for any $\pp \in \Spec(T)$ inside the dense open subset $D(a) \subset \Spec(T)$, if $k(\pp)$ denotes the residue field $k(\pp) = T_\pp/\pp T_\pp$ of $T_\pp$, one has that 
	$$
	\dim_{k(\pp)}\big(\left[S_a\otimes_{T_a} k(\pp)\right]_\nu\big) = \dim_{\Quot(T)}\big(\left[S_a\otimes_{T_a} \Quot(T)\right]_\nu\big) = \dim_\LL\big(\left[R_\LL/zR_\LL\right]_\nu\big)
	$$
	for all $\nu \in \NN^p$. 
	Note that for any $\beta = (\beta_0,\ldots,\beta_s) \in \kk^{s+1}$ with $\pp_\beta=(z_0-\beta_0,\ldots,z_s-\beta_s) \in D(a)$ one has the isomorphisms 
	$$
	S_a\otimes_{T_a} k(\pp_\beta) \;\cong\; \frac{R\otimes_\kk T}{\left(z_0x_0+\cdots+z_sx_s, z_0-\beta_0,\ldots, z_s-\beta_s\right)} \;\cong\; R / \left(\beta_0x_0+\cdots+\beta_sx_s\right)R.
	$$
	So, the result follows.
\end{proof}

We now obtain \autoref{thmA} when $X$ is an irreducible scheme.

\begin{remark}
	We first provide a couple of general words regarding the proof of \autoref{thm_main_irreducible} below and where the irreducibility assumption comes into play.
	The proof is achieved by iteratively cutting with generic hyperplanes (following \autoref{nota_generic}) to arrive to a zero-dimensional situation, and the main constraint is to control the dimension of the image of all the possible projections after cutting with a general hyperplane (see \autoref{eq_induction_step_main}).
	Our main tool to control those dimensions is \autoref{thm_projections}, where it is needed to assume that $R$ is a domain. 
	When $X$ is irreducible, by just taking the reduced scheme structure $X_\text{red} = \multProj(R/\sqrt{0})$ we can easily reduce to the case where $R$ is a domain.
	To maintain the irreducibility assumption during the inductive process, we use a ``generic'' version of Bertini's Theorem as presented in \cite[Proposition 1.5.10]{FLENNER_O_CARROLL_VOGEL}.
	It should be noted that the usual versions of Bertini's Theorem for irreducibility require $X$ to be geometrically irreducible and that the dimension of the image of certain morphism is bigger or equal than two (see \cite[Theoreme 6.10, Corollaire 6.11]{JOUANOLOU_BERTINI}).
	Finally, \autoref{lem_generic} is used to relate the process of cutting with a \emph{generic} hyperplane with the one of cutting with a \emph{general} hyperplane.
\end{remark}

\begin{theorem}
	\label{thm_main_irreducible}
	Assume \autoref{setup_initial}.
	Suppose that $X$ is irreducible.
	Let $\bn=(n_1,\ldots,n_p) \in \NN^p$ such that $\lvert \bn \rvert=r$.
	Then, $\deg_\PP^\bn(X) > 0$ if and only if for each $\fJ = \{j_1,\ldots,j_k\} \subseteq [p]$ the inequality 
	$
	n_{j_1} + \cdots + n_{j_k} \le r(\fJ)
	$
	holds.
\end{theorem}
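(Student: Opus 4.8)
The plan is to prove both directions by induction on $r = \dim(X)$, using the process of cutting with a generic hyperplane (in the sense of \autoref{nota_generic}) and controlling the dimensions of all projections via \autoref{thm_projections}. By replacing $X$ with $X_{\mathrm{red}} = \multProj(R/\sqrt{0})$ and applying \autoref{rem_infinite_field}, we may assume that $R$ is a domain and, by \autoref{modSat}, that $\kk$ is an infinite field; note that passing to the reduced structure and extending the base field changes neither the hypotheses nor the multidegrees $\deg_\PP^\bn(X)$ (the latter by \autoref{modSat}(ii) together with the fact that $(0:_R\nn^\infty)=0$ in the domain case).

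\textbf{Necessity of the inequalities.} Suppose $\deg_\PP^\bn(X) > 0$ but $n_{j_1}+\cdots+n_{j_k} > r(\fJ)$ for some $\fJ$. By \autoref{thm_deg_Hilb_pol}, $\deg(P_R;\fJ) = r(\fJ)$, so $P_R(\ttt)$ has no monomial divisible by $t_{j_1}^{a_{j_1}}\cdots t_{j_k}^{a_{j_k}}$ with $a_{j_1}+\cdots+a_{j_k} > r(\fJ)$. Comparing with the expansion \autoref{eq_Hilb_poly}: the term $e(\bn;R)\binom{t_1+n_1}{n_1}\cdots\binom{t_p+n_p}{n_p}$ contributes a monomial $\frac{1}{n_1!\cdots n_p!}t_1^{n_1}\cdots t_p^{n_p}$ of top degree in each variable, and since $\lvert\bn\rvert=r=\deg P_R$ this monomial cannot be cancelled by any other summand. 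Hence its $\fJ$-degree $n_{j_1}+\cdots+n_{j_k}$ is at most $r(\fJ)$, a contradiction.

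\textbf{Sufficiency.} Suppose all the inequalities $n_{j_1}+\cdots+n_{j_k}\le r(\fJ)$ hold; we show $\deg_\PP^\bn(X)>0$ by induction on $r$. The base case $r=0$ forces $\bn=\mathbf{0}$ and $\deg_\PP^{\mathbf{0}}(X) = \operatorname{length}([R]_\nu)$ for $\nu\gg\mathbf{0}$, which is positive since $X\neq\emptyset$. For the inductive step, since $r\ge 1$ and $\lvert\bn\rvert=r$, some $n_i\ge 1$; reindexing, assume $n_1\ge 1$, so in particular $r(1)\ge n_1\ge 1$. Let $z\in[R_\LL]_{\ee_1}$ be the generic element as in \autoref{nota_generic} and set $Z = X_\LL\cap V(z)$. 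Using the ``generic'' Bertini theorem of Flenner--O'Carroll--Vogel \cite[Proposition 1.5.10]{FLENNER_O_CARROLL_VOGEL}, $Z$ is irreducible (after passing to its reduced structure). The short exact sequence $0\to R_\LL(-\ee_1)\xrightarrow{z} R_\LL\to R_\LL/zR_\LL\to 0$ gives
\begin{equation}
\label{eq_induction_step_main}
P_{R_\LL/zR_\LL}(\ttt) = P_{R_\LL}(\ttt) - P_{R_\LL}(\ttt-\ee_1),
\end{equation}
from which one computes, via the expansion \autoref{eq_Hilb_poly}, that $e(\bn-\ee_1; R_\LL/zR_\LL) = e(\bn; R_\LL) = \deg_\PP^\bn(X)$ (the contributions of all terms with $n_1' \ge n_1$ telescope appropriately, while lower ones vanish upon taking the $(\bn-\ee_1)$-coefficient). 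Now $\dim(Z) = r-1$ by \autoref{lem_dim_projects}(i), and by \autoref{thm_projections}, for each $\fJ\subseteq[p]$,
$$
\dim(\Pi_\fJ(Z)) = \min\{r(\fJ),\ r(\fJ\cup\{1\})-1\} =: r_Z(\fJ).
$$
We must check the hypotheses of the inductive statement for $Z$ and the multi-index $\bn' := \bn - \ee_1$, i.e., $n'_{j_1}+\cdots+n'_{j_k}\le r_Z(\fJ)$ for all $\fJ$. If $1\notin\fJ$, then $n'_{j_1}+\cdots+n'_{j_k} = n_{j_1}+\cdots+n_{j_k}\le r(\fJ)$, and also $\le r(\fJ\cup\{1\})-1$ because applying the hypothesis to $\fJ\cup\{1\}$ gives $n_{j_1}+\cdots+n_{j_k}+n_1 \le r(\fJ\cup\{1\})$ with $n_1\ge 1$; hence $n'$-sum $\le r_Z(\fJ)$. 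If $1\in\fJ$, then $n'$-sum $= (n_{j_1}+\cdots+n_{j_k})-1 \le r(\fJ)-1$; since $\fJ\cup\{1\}=\fJ$, $r_Z(\fJ)=\min\{r(\fJ), r(\fJ)-1\}=r(\fJ)-1$, so again the inequality holds. By \autoref{lem_generic}, the Hilbert function of $R_\LL/zR_\LL$ equals that of $R/xR$ for a general $x\in[R]_{\ee_1}$; in particular $\deg_\PP^\bn(X) = e(\bn-\ee_1; R_\LL/zR_\LL)$, and by induction the latter is positive. This completes the argument.

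\textbf{Main obstacle.} The delicate point is the inductive step: one must ensure that after cutting by a generic hyperplane the scheme $Z$ remains irreducible (handled by the Flenner--O'Carroll--Vogel generic Bertini theorem, which avoids the geometric-irreducibility and dimension-$\ge 2$ hypotheses of the classical statements) \emph{and} that the dimensions of \emph{all} $2^p$ projections drop by exactly the amount dictated by $r_Z(\fJ) = \min\{r(\fJ), r(\fJ\cup\{1\})-1\}$; the equality $\dim(\Pi_\fJ(Z)) = r_Z(\fJ)$ rather than merely $\le$ is exactly \autoref{thm_projections}, whose proof relies on Grothendieck's connectedness theorem \autoref{lem_dimension_connect} and is the technical heart underpinning this whole scheme. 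The remaining bookkeeping — matching $e(\bn; R_\LL)$ with $e(\bn-\ee_1; R_\LL/zR_\LL)$ via \autoref{eq_induction_step_main}, and transferring from the generic element back to a general one via \autoref{lem_generic} — is routine.
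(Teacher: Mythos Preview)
Your proof is essentially correct and follows the same strategy as the paper's: reduce to the domain case over an infinite field, induct on $r$, and at each step cut by a generic hyperplane, invoking the Flenner--O'Carroll--Vogel Bertini theorem for irreducibility and \autoref{thm_projections} to control the dimensions of all projections. Two small corrections are in order.

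First, your claim that ``passing to the reduced structure \ldots\ changes neither the hypotheses nor the multidegrees $\deg_\PP^\bn(X)$'' is false as stated: the associativity formula gives $e(\bn;R) = \text{length}_{R_{\sqrt 0}}(R_{\sqrt 0})\cdot e(\bn;R/\sqrt 0)$, so the multidegrees scale by a positive integer. What is true (and is all you need) is that \emph{positivity} is preserved; the paper makes exactly this reduction via the associativity formula.

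Second, you apply \autoref{lem_dim_projects}(i) and \autoref{thm_projections} directly to the \emph{generic} cut $Z = X_\LL \cap V(z)$, but both results are stated for a \emph{general} element $x \in [R]_{\ee_1}$. The paper bridges this gap by first applying \autoref{thm_projections} to the general cut over $\kk$ and then transferring the projection dimensions to the generic cut via \autoref{lem_generic} together with \autoref{thm_deg_Hilb_pol} (equal Hilbert polynomials force equal $\deg(P;\fJ)$, hence equal $\dim(\Pi_\fJ(\cdot))$). Your invocation of \autoref{lem_generic} at the end covers the multidegree identity $e(\bn;R) = e(\bn-\ee_1;R_\LL/zR_\LL)$ (which the paper gets by citing \cite[Lemma~3.9]{MIXED_MULT}) but not this transfer of projection dimensions. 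Alternatively, one can argue that the finitely many avoidance conditions defining ``general'' in the proofs of \autoref{lem_radical_sats}--\autoref{thm_projections} are all defined over $\kk$ and are therefore automatically satisfied by the generic element $z$; this is valid but deserves a sentence.
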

\begin{proof}
	From \autoref{thm_deg_Hilb_pol} it is clear that the inequalities $n_{j_1} + \cdots + n_{j_k} \le r(\fJ)$ are a necessary condition for $\deg_\PP^\bn(X)=e(\bn;R) >0$.
	Therefore, it suffices to show that they are also sufficient.
	
	Assume that $
	n_{j_1} + \cdots + n_{j_k} \le r(\fJ)
	$ 	for every $\fJ = \{j_1,\ldots,j_k\} \subseteq [p]$. We  may also assume that $(0:_R\nn^\infty)=0$  by  \autoref{modSat}(i). 
	Hence, the condition of $X$ being irreducible implies that $\sqrt{0} \subset R$ is a prime ideal. 
	Since the associativity formula for mixed multiplicities (see, e.g., \cite[Lemma 2.7]{MIXED_MULT}) yields that 
	$$
	e(\bn;R)=\text{length}_{R_{\sqrt{0}}}\left(R_{\sqrt{0}}\right)\cdot e\left(\bn;R/{\sqrt{0}}\right),
	$$ 
	we can assume that $R$ is a domain, and we do so.
	In addition, by \autoref{modSat}(ii),  \autoref{thm_deg_Hilb_pol}, and \autoref{rem_infinite_field}	
	we may  also assume that $\kk$ is an infinite field.
	
	We proceed by induction on $r$. If $r = 0$, then \cite[Theorem 3.10]{MIXED_MULT} implies  $e(\mathbf{0};R)>0$.
	
	Suppose now that $r \ge 1$.
	Without any loss of generality, perhaps after changing the grading, we can assume that $n_1 \ge 1$.
	Let $\LL$, $R_\LL$, $X_\LL$ and $z$ be defined as in \autoref{nota_generic}.
	Let $x \in [R]_{\ee_1}$ be a general element. 
	Set $S = R/xR$, $Z = X \cap V(x) = \multProj(S)$, $T = R_\LL/zR_\LL$, $W = X_\LL \cap V(z) = \multProj(T)$ and $\bn'=\bn-\ee_1$.
	Then, \cite[Lemma 3.9]{MIXED_MULT} and \autoref{lem_generic} yield that $e(\bn;R)=e(\bn';S)=e(\bn';T)$.	
	From  \cite[Proposition 1.5.10]{FLENNER_O_CARROLL_VOGEL} we obtain that $W$ is also an irreducible scheme.
	By the assumed inequalities and because  $n_1\ge 1$ we have that for each $\fJ = \{j_1,\ldots,j_k\} \subseteq [p]$ the following inequality holds
	\begin{align}
	\label{eq_induction_step_main}
	n_{j_1}' + \cdots + n_{j_k}' \le  
	\min\Big\{ r(\fJ),\; r\big(\fJ \cup \{1\}\big)-1\Big\},
	\end{align} 
and the latter is equal to $\dim(\Pi_\fJ(Z))$ by \autoref{thm_projections}.  Moreover, by 
 \autoref{lem_generic} and \autoref{thm_deg_Hilb_pol}, we also have $ \dim(\Pi_\fJ(W))=\dim(\Pi_\fJ(Z))$; here, by an abuse of notation $\Pi_\fJ(W)$ denotes the image of the natural projection 
	$
	\Pi_\fJ: \PP \otimes_\kk \LL \rightarrow \PP_\LL^{m_{j_1}} \times_\LL \cdots \times_\LL \PP_\LL^{m_{j_k}}
	$	
	restricted to $W$.
	
	Finally, by using the inductive hypothesis applied to the irreducible scheme $W$, we obtain that $e(\bn;R)=e(\bn';T) > 0$, and so the result follows.
\end{proof}

Now we are ready to  show the general version of \autoref{thmA}.

\begin{corollary}
	\label{thm_main}
	Assume \autoref{setup_initial}.
	Let $\bn=(n_1,\ldots,n_p) \in \NN^p$ such that $\lvert \bn \rvert=\dim(X)$.
	Then, $\deg_\PP^\bn(X) > 0$ if and only if there is an irreducible component $Y \subseteq X$ of $X$ that satisfies the following two conditions:
	\begin{enumerate}[\rm (a)]
		\item $\dim(Y) = \dim(X)$.
		\item For each $\fJ = \{j_1,\ldots,j_k\} \subseteq [p]$ the inequality 
		$
		n_{j_1} + \cdots + n_{j_k} \le \dim\big(\Pi_\fJ(Y)\big)
		$
		holds.
	\end{enumerate} 
\end{corollary}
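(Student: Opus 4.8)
The plan is to reduce the general case to the irreducible case already established in \autoref{thm_main_irreducible} via the associativity formula for mixed multiplicities. First I would recall that, just as in the proof of \autoref{thm_main_irreducible}, we may assume $(0:_R\nn^\infty)=0$ by \autoref{modSat}(i), so that $\dim(R)=r+p$ where $r=\dim(X)$, and the minimal primes of $R$ of maximal dimension are exactly the (homogeneous) prime ideals $\fP$ corresponding to the top-dimensional irreducible components $Y=\multProj(R/\fP)$ of $X$. The associativity formula for mixed multiplicities (see, e.g., \cite[Lemma 2.7]{MIXED_MULT}) gives
\begin{equation*}
e(\bn;R) \;=\; \sum_{\substack{\fP \in \Min(R)\\ \dim(R/\fP)=r+p}} \operatorname{length}_{R_\fP}(R_\fP)\cdot e\big(\bn;R/\fP\big),
\end{equation*}
where the sum runs only over those minimal primes $\fP$ with $\dim(R/\fP)=\dim(R)$, since $e(\bn;R/\fP)=0$ whenever $\dim(R/\fP)<r+p$ (the Hilbert polynomial of $R/\fP$ then has degree $<r$). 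All the coefficients $\operatorname{length}_{R_\fP}(R_\fP)$ are positive integers and all the terms $e(\bn;R/\fP)$ are non-negative, so $e(\bn;R)>0$ if and only if $e(\bn;R/\fP)>0$ for at least one such $\fP$.

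It then remains to translate the condition $e(\bn;R/\fP)>0$ for a fixed top-dimensional minimal prime $\fP$ into conditions (a) and (b). Condition (a) is built in: we are only summing over $\fP$ with $\dim(R/\fP)=\dim(X)$, i.e., over top-dimensional components $Y$. For a fixed such $\fP$, the ring $R/\fP$ is a standard $\NN^p$-graded domain over $\kk$ with $\multProj(R/\fP)=Y$ irreducible of dimension $r$, so \autoref{thm_main_irreducible} applies and yields $e(\bn;R/\fP)>0$ if and only if for every $\fJ=\{j_1,\ldots,j_k\}\subseteq[p]$ we have $n_{j_1}+\cdots+n_{j_k}\le \dim\big(\Pi_\fJ(Y)\big)$ — here I would note that the projection $\Pi_\fJ$ attached to $R/\fP$ in \autoref{setup_initial} agrees with the restriction to $Y$ of the ambient projection $\Pi_\fJ\colon\PP\to\PP_\kk^{m_{j_1}}\times_\kk\cdots\times_\kk\PP_\kk^{m_{j_k}}$, via \autoref{eq_isom}, so $r_{R/\fP}(\fJ)=\dim(\Pi_\fJ(Y))$. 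Combining this equivalence with the previous paragraph gives exactly the stated biconditional.

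I do not expect a serious obstacle here; the only points requiring a little care are the vanishing $e(\bn;R/\fP)=0$ for lower-dimensional primes (so that the associativity sum only sees top-dimensional components, matching condition (a)), and the identification of the intrinsic projection dimensions of $R/\fP$ with the geometric projection dimensions $\dim(\Pi_\fJ(Y))$ of the component $Y\subseteq\PP$. Both are immediate from results already in hand: the first from the fact that $\deg P_{R/\fP}=\dim(R/\fP)-p$, and the second from \autoref{lem_basics_projections} together with \autoref{rem_isom_quotient_grad}. The reduction to $\kk$ infinite needed in \autoref{thm_main_irreducible} is internal to that theorem's proof, so nothing extra is required at this level.
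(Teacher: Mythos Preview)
Your proposal is correct and follows essentially the same route as the paper: reduce to $(0:_R\nn^\infty)=0$, apply the associativity formula for mixed multiplicities to write $e(\bn;R)$ as a positive combination of $e(\bn;R/\fP)$ over top-dimensional minimal primes, and then invoke \autoref{thm_main_irreducible} for each $R/\fP$. The extra care you take in justifying that only top-dimensional primes contribute and in identifying $r_{R/\fP}(\fJ)$ with $\dim(\Pi_\fJ(Y))$ is fine but not strictly needed beyond what the paper already records.
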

\begin{proof}
We may assume that $(0:_R\nn^\infty)=0$ by  \autoref{modSat}(i).
	By the associativity formula for mixed multiplicities (see, e.g., \cite[Lemma 2.7]{MIXED_MULT}) we get the equation
	$$
	\deg_\PP^\bn(X) \;=\; e(\bn;R) \;=\; \sum_{\substack{\fP \in \Min(R)\\ \dim(R/\fP) = r+p}} \text{length}_{R_\fP}(R_\fP)\cdot
	e(\bn; R/\fP).
	$$
	Thus, $e(\bn;R) > 0$ if and only if $e(\bn;R/\fP) > 0$ for some minimal prime $\fP \in \Min(R)$ of maximal dimension. 
	So, the result is clear from \autoref{thm_main_irreducible}.
\end{proof}

Below we have a proof for \autoref{corB}.

\begin{corollary}
	\label{cor_positive_mixed_mult}
	Assume \autoref{setup_Artinian}.
	Let $\bn=(n_1,\ldots,n_p) \in \NN^p$ such that $\dim\left(R/\left(0:_R\nn^\infty\right)\right) - p= \lvert \bn \rvert$.
	Then, $e(\bn;R) > 0$ if and only if there is a minimal prime ideal $\fP \in \Min\left(0:_R\nn^\infty\right)$ of $\left(0:_R\nn^\infty\right)$ that satisfies the following two conditions:
	\begin{enumerate}[\rm (a)]
		\item $\dim\left(R/\fP\right)  = \dim\left(R/\left(0:_R\nn^\infty\right)\right)$.
		\item For each $\fJ = \{j_1,\ldots,j_k\} \subseteq [p]$ the inequality 
		$
		n_{j_1} + \cdots + n_{j_k} \le \dim\left(\frac{R}{\fP+\sum_{j \not\in \fJ} \mm_j}\right) - k
		$
		holds. 
	\end{enumerate} 
\end{corollary}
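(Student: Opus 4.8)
The plan is to deduce \autoref{cor_positive_mixed_mult} from the field case \autoref{thm_main_irreducible} by the usual reduction to a residue field. First I would observe that the argument of \autoref{modSat}(i) — which only uses that $\nn^k\left(0:_R\nn^\infty\right)=0$ for $k\gg0$ — is insensitive to the base being an Artinian local ring, so one may assume $\left(0:_R\nn^\infty\right)=0$; this changes neither $e(\bn;R)$ nor any of the rings appearing in (a) and (b), and now $\Min\left(0:_R\nn^\infty\right)=\Min(R)$ while $\dim(R)=\dim\left(R/\left(0:_R\nn^\infty\right)\right)=r+p$ with $r=\lvert\bn\rvert$. Then I would invoke the associativity formula for mixed multiplicities \cite[Lemma 2.7]{MIXED_MULT}, exactly as in the proof of \autoref{thm_main}, to obtain
$$
e(\bn;R)\;=\;\sum_{\substack{\fP\in\Min(R)\\\dim(R/\fP)=r+p}}\text{length}_{R_\fP}\!\left(R_\fP\right)\cdot e(\bn;R/\fP),
$$
where each $R_\fP$ is Artinian local and hence the coefficients are finite and strictly positive. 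Thus $e(\bn;R)>0$ if and only if $e(\bn;R/\fP)>0$ for some minimal prime $\fP$ with $\dim(R/\fP)=r+p$, i.e.\ satisfying (a).

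Next I would fix such a $\fP$ and pass to the residue field $\kk:=A/\mathfrak{m}$ of $A$. Since $A$ is Artinian local, the degree-$\mathbf{0}$ component of $\fP$ is $\mathfrak{m}$, so $D:=R/\fP$ is a domain that is a finitely generated standard $\NN^p$-graded $\kk$-algebra as in \autoref{setup_initial}; moreover each $[D]_\nu$ is a $\kk$-vector space, so $\text{length}_A\!\left([D]_\nu\right)=\dim_\kk\!\left([D]_\nu\right)$, whence the Hilbert polynomial of \autoref{def_mixed_mult} agrees with that of \autoref{def_multdeg} and $e(\bn;D)=\deg_\PP^\bn\!\left(\multProj(D)\right)$. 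A small but crucial point I would verify is that (a) forces $\mm_i\not\subseteq\fP$ for every $i\in[p]$: otherwise $\dim(R/\fP)\le\dim(R/\mm_i)=\dim\big(\Pi_{[p]\setminus\{i\}}(X)\big)+(p-1)\le r+p-1$ by \autoref{rem_isom_quotient_grad} together with \autoref{lem_basics_projections}(i) and (iv), contradicting (a). In particular $\nn\not\subseteq\fP$, so $\multProj(D)$ is a nonempty irreducible scheme and the multigraded irrelevant ideal of the domain $D$ is nonzero.

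Finally I would apply \autoref{thm_main_irreducible} to $D$ over $\kk$. Since $\lvert\bn\rvert=r=\dim\big(\multProj(D)\big)$ by \autoref{lem_basics_projections}(i), this gives $e(\bn;D)>0$ if and only if $n_{j_1}+\cdots+n_{j_k}\le\dim\big(\Pi_\fJ(\multProj(D))\big)$ for every $\fJ=\{j_1,\dots,j_k\}\subseteq[p]$. By \autoref{lem_basics_projections}(iii) and \autoref{rem_isom_quotient_grad} one has $\Pi_\fJ(\multProj(D))\cong\multProj\big(D_{(\fJ)}\big)$ with $D_{(\fJ)}\cong R/\big(\fP+\sum_{j\notin\fJ}\mm_j\big)$, and $D_{(\fJ)}$, being a subring of the domain $D$, is a domain whose $\NN^k$-graded irrelevant ideal is nonzero when $\fJ\ne\emptyset$ (because each $[D]_{\ee_{j_l}}\ne0$). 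Hence \autoref{lem_basics_projections}(i) yields $\dim\big(\Pi_\fJ(\multProj(D))\big)=\dim\big(R/(\fP+\sum_{j\notin\fJ}\mm_j)\big)-k$, which is precisely the right-hand side of (b); for $\fJ=\emptyset$ both sides of the inequality vanish. Chaining this with the previous paragraphs proves the claim.

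I expect the main obstacle to be bookkeeping rather than genuine difficulty: all the real content is carried by \autoref{thm_main_irreducible}. The delicate points are justifying that the Artinian Hilbert polynomial of the $\kk$-algebra $D$ coincides with the field one, and ensuring that $D$ and each Segre-type subring $D_{(\fJ)}$ are domains with nonzero irrelevant ideals so that \autoref{lem_basics_projections}(i) converts dimensions of projections \emph{verbatim} into the quantities $\dim\big(R/(\fP+\sum_{j\notin\fJ}\mm_j)\big)-k$; the elementary observation that a maximal-dimensional minimal prime contains none of the $\mm_i$ is exactly what legitimizes that conversion.
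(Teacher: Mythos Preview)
Your proof is correct and follows exactly the same route as the paper's: reduce to $(0:_R\nn^\infty)=0$, apply the associativity formula, observe that each $R/\fP$ is naturally a standard $\NN^p$-graded algebra over the residue field $\kk=A/\mathfrak m$, and invoke \autoref{thm_main_irreducible}. The paper's argument is a terse three sentences and leaves all of your bookkeeping implicit; your only superfluous step is the dimension argument for $\mm_i\not\subseteq\fP$, which follows more directly from $\nn\subseteq\mm_i$ together with the fact that no minimal prime of $R$ contains $\nn$ once $(0:_R\nn^\infty)=0$.
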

\begin{proof}
	As in \autoref{thm_main}, after assuming that $(0:_R\nn^\infty)=0$ and using the associativity formula for mixed multiplicities, we obtain that $e(\bn;R) > 0$ if and only if $e(\bn;R/\fP) > 0$ for some minimal prime $\fP \in \Min(R)$ of maximal dimension.  
	Note that, for each $\fP \in \Min(R)$,  $R/\fP$ is naturally a finitely generated standard $\NN^p$-graded algebra over a field.
	So, the result follows by using \autoref{thm_main_irreducible}.
\end{proof}

Finally, for the sake of completeness, we provide a brief discussion on how \autoref{thm_main_irreducible} can be recovered (over the complex number) from the related results of \cite[\S 2.2]{KAVEH_KHOVANSKII}.

\begin{remark}
	\label{rem_Kaveh_Kho}
	Assume $\kk = \CC$.
	For the closed subscheme $X \subset \PP = \PP_\kk^{m_1} \times_\kk \cdots \times_\kk \PP_\kk^{m_p}$, let $L_i$ be the pullback of $\mO_{\PP_\kk^{m_i}}(1)$ to $X$. Take $E_i$ to be $|L_i|$. 
	Following the notation in \cite[\S 2.2]{KAVEH_KHOVANSKII}, for each $\emptyset\neq \fJ\subseteq [p]$, denote by 
	$$
	\Phi_\fJ \,:\, X \rightarrow \PP\left(E_\fJ^{\vee}\right)
	$$ the Kodaira map corresponding with the linear system $E_\fJ$.
	Let $\tau_\fJ$ be the dimension of the closure of the image of $\Phi_\fJ$ (\cite[Definition 2.12]{KAVEH_KHOVANSKII}).
	Consequently, it is easy to check that $\dim\left(\Pi_{\fJ}(X)\right)=\tau_{\fJ}$.
	Thus, \cite[Theorems 2.14, 2.19]{KAVEH_KHOVANSKII} translate into the following statement: $\dim\left(\Pi_{\fJ}(X)\right) \ge |\fJ|$ if and only if for general hyperplanes $H_j\in |\Pi_j^*\mO_{\PP_\kk^{m_i}}(1)|$ ($j\in\fJ$), $X\cap(\bigcap_{j\in\fJ}H_j)\neq\emptyset$. The latter is equivalent to the condition $[X]\cdot \prod_{j\in\fJ} [H_j]\neq 0$ on intersection of classes. \autoref{thm_main_irreducible} (over the complex numbers) eventually follows from applying this statement finitely many times to relevant index subsets $\fJ$.
\end{remark}


\section{Positivity of the mixed multiplicities of ideals}\label{sec_mix_ideals}

In this section, we characterize the positivity of the mixed multiplicities of ideals.
The results obtained here are a consequence of applying \autoref{corB} to a certain multigraded algebra.
For the particular case of ideals generated in one degree in graded domains we obtain a neat characterization in  \autoref{thm_equigen_ideals}.

Throughout this section we use the following setup. 

\begin{setup}
	\label{setup_mixed_mult_ideals}
	Let $R$ be a Noetherian local ring with maximal ideal $\mm \subset R$ (or a finitely generated standard graded algebra over a field $\kk$ with graded irrelevant ideal $\mm \subset R$).
\end{setup}

Let $J_0 \subset R$ be an $\mm$-primary ideal and $J_1,\ldots,J_p \subset R$ be arbitrary ideals (homogeneous in the graded case).
The \textit{multi-Rees algebra} of the ideals $J_0,J_1,\ldots,J_p$ is given by 
$$
\Rees(J_0,\ldots,J_p) \;:=\; R[J_0t_0,\ldots,J_pt_p] \;=\; \bigoplus_{i_0 \ge 0, \ldots,i_p \ge 0} J_0^{i_0}\cdots J_p^{i_p} t_0^{i_0}\cdots t_p^{i_p} \;\subset\; R[t_0,\ldots,t_p],
$$
where $t_0,\ldots,t_p$ are new variables.
Note that $\Rees(J_0,\ldots,J_p)$ is naturally a standard $\NN^{p+1}$-graded algebra and that, for $0 \le k \le p$, the ideal $\mm_k$ generated by elements of degree $\ee_k$ is given by 
$$
\mm_k \;:=\; J_kt_k \, \Rees(J_0,\ldots,J_p) \; \subset \; \Rees(J_0,\ldots,J_p).  
$$
Let $\nn:=\mm_0 \cap \cdots \cap \mm_p$ be the corresponding multigraded irrelevant ideal.
Since $J_0$ is $\mm$-primary, we obtain that
$$
T(J_0\mid J_1,\ldots,J_p) \;:=\; \Rees(J_0,\ldots,J_p) \otimes_R R/J_0 \;=\; \bigoplus_{i_0 \ge 0, i_1 \ge 0 \ldots,i_p \ge 0} J_0^{i_0}J_1^{i_1}\cdots J_p^{i_p} \big/ J_0^{i_0+1}J_1^{i_1}\cdots J_p^{i_p}
$$
is a finitely generated standard $\NN^{p+1}$-graded algebra over the Artinian local ring $R/J_0$.
For simplicity of notation, throughout this section we fix
$\Rees := \Rees(J_0,\ldots,J_p)$  
and
$T  := T(J_0\mid J_1,\ldots,J_p)$.
Let $r$ be the integer 
$
r := \dim\left(\multProj\left(T\right)\right),
$
which coincides with the degree of the Hilbert polynomial $P_{_{T} }(u_1,\ldots,u_{p+1})$ of the $\NN^{p+1}$-graded $R/J_0$-algebra $T$.
From \cite[Theorem 1.2(a)]{TRUNG_VERMA_MIXED_VOL} we have the equality $r = \dim\left(R/(0:_R(J_1\cdots J_p)^\infty)\right)-1$.

\begin{definition}
	Under the above notations, for each $\bn \in \NN^{p+1}$ with $\lvert \bn \rvert = r$, we say that
	$$
	e_\bn\left(J_0\mid J_1,\ldots,J_p\right) \;:= \; e\left(\bn; T\right)
	$$
	is the \emph{mixed multiplicity of $J_0,J_1,\ldots,J_p$ of type $\bn$}.
\end{definition}

The main focus in this section is to characterize when $e_\bn\left(J_0\mid J_1,\ldots,J_p\right) > 0$.
As a direct consequence of \autoref{corB} we get the following general criterion for the positivity $e_\bn\left(J_0\mid J_1,\ldots,J_p\right)$.

\begin{corollary}
	\label{cor_mixed_mult_ideals}
	Assume \autoref{setup_mixed_mult_ideals} and the notations above.
	Let $\bn=(n_0,n_1,\ldots,n_p) \in \NN^{p+1}$ such that $\lvert \bn \rvert = r$.
	Then, $e_\bn\left(J_0\mid J_1,\ldots,J_p\right) > 0$ if and only if there is a minimal prime ideal $\fP \in \Min\left(0:_T\nn^\infty\right)$ of $\left(0:_T\nn^\infty\right)$ that satisfies the following two conditions:
	\begin{enumerate}[\rm (a)]
		\item $\dim\left(T/\fP\right)  = \dim\left(T/\left(0:_T\nn^\infty\right)\right)$.
		\item For each $\fJ = \{j_1,\ldots,j_k\} \subseteq \{0\} \cup [p]$ the inequality,
		$
		n_{j_1} + \cdots + n_{j_k} \le \dim\left(\frac{T}{\fP+\sum_{j \not\in \fJ} \mm_jT}\right) - k
		$
		holds.
	\end{enumerate} 
\end{corollary}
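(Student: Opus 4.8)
The plan is to deduce \autoref{cor_mixed_mult_ideals} directly from \autoref{corB} (that is, \autoref{cor_positive_mixed_mult}) applied to the standard $\NN^{p+1}$-graded algebra $T = T(J_0\mid J_1,\ldots,J_p)$ over the Artinian local ring $A = R/J_0$. First I would verify that the hypotheses of \autoref{setup_Artinian} are met: $A = R/J_0$ is Artinian local since $J_0$ is $\mm$-primary, and $T = \Rees \otimes_R R/J_0$ is a finitely generated standard $\NN^{p+1}$-graded $A$-algebra because $\Rees(J_0,\ldots,J_p) = R[J_0t_0,\ldots,J_pt_p]$ is generated in degrees $\ee_0,\ldots,\ee_p$. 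The index set here is $\{0\}\cup[p]$ rather than $[p]$, but this is a harmless relabeling; \autoref{corB} holds verbatim with $p$ replaced by $p+1$ and the coordinates indexed by $\{0,1,\ldots,p\}$.

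Next I would match up the numerology. By \autoref{def_mixed_mult} and the discussion preceding the statement, $r = \dim(\multProj(T))$ equals the degree of the Hilbert polynomial $P_T$, which by the formula in \autoref{def_mixed_mult} equals $\dim\big(T/(0:_T\nn^\infty)\big) - (p+1)$. Hence the condition $\lvert\bn\rvert = r$ in \autoref{cor_mixed_mult_ideals} is exactly the condition $\lvert\bn\rvert = \dim\big(R/(0:_R\nn^\infty)\big) - (p+1)$ required to apply \autoref{corB} (with the understanding that there are $p+1$ grading directions). By \autoref{def_multdeg} / \autoref{def_mixed_mult}, $e_\bn(J_0\mid J_1,\ldots,J_p) := e(\bn;T)$, so the positivity question we are asked about is literally the positivity question answered by \autoref{corB} for the algebra $T$.

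Finally I would invoke \autoref{corB} for $T$: $e(\bn;T) > 0$ if and only if there is a minimal prime $\fP \in \Min(0:_T\nn^\infty)$ with $\dim(T/\fP) = \dim\big(T/(0:_T\nn^\infty)\big)$ and, for each $\fJ \subseteq \{0\}\cup[p]$ of size $k$, the inequality $n_{j_1}+\cdots+n_{j_k} \le \dim\big(T/(\fP + \sum_{j\notin\fJ}\mm_j)\big) - k$. In \autoref{corB} the ideals $\mm_j$ are generated by the elements of degree $\ee_j$ in $T$; under the identification $T = \Rees \otimes_R R/J_0$ these are exactly the images of the ideals $\mm_j = J_jt_j\,\Rees$, so $\sum_{j\notin\fJ}\mm_j$ in $T$ coincides with $\sum_{j\notin\fJ}\mm_jT$ in the notation of the corollary. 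This is the only point requiring a line of care, and it is immediate once one unwinds definitions; there is no real obstacle. The statement of \autoref{cor_mixed_mult_ideals} is then exactly the conclusion obtained.

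\begin{proof}
	By \autoref{setup_Artinian}, it suffices to note that $A := R/J_0$ is an Artinian local ring, since $J_0$ is $\mm$-primary, and that $T = T(J_0\mid J_1,\ldots,J_p) = \Rees \otimes_R R/J_0$ is a finitely generated standard $\NN^{p+1}$-graded $A$-algebra, since $\Rees = R[J_0t_0,\ldots,J_pt_p]$ is generated over $R$ by elements of degree $\ee_0,\ldots,\ee_p$.
	From \autoref{def_mixed_mult}, the degree $r$ of the Hilbert polynomial $P_T$ equals $\dim\big(T/(0:_T\nn^\infty)\big) - (p+1)$, so the hypothesis $\lvert\bn\rvert = r$ is precisely the condition needed to apply \autoref{cor_positive_mixed_mult} to $T$ (with the grading group $\NN^{p+1}$ indexed by $\{0\}\cup[p]$).
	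By definition $e_\bn\left(J_0\mid J_1,\ldots,J_p\right) = e(\bn;T)$, and the ideals $\mm_j \subset T$ generated by the elements of degree $\ee_j$ are the images of $J_jt_j\,\Rees$, hence equal to $\mm_jT$ in the notation above.
	Therefore \autoref{cor_positive_mixed_mult} applied to $T$ yields exactly the stated equivalence.
\end{proof}
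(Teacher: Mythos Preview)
Your proposal is correct and matches the paper's approach exactly: the paper states \autoref{cor_mixed_mult_ideals} as a direct consequence of \autoref{corB} applied to the standard $\NN^{p+1}$-graded $R/J_0$-algebra $T$, without even writing a separate proof. Your careful verification that the hypotheses of \autoref{setup_Artinian} are met, that $\lvert\bn\rvert=r$ is the right degree condition, and that the ideals $\mm_jT$ agree with the intrinsic $\mm_j$ of $T$ simply makes explicit what the paper leaves implicit.
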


We now focus on the case where $R$ is a graded $\kk$-domain and each ideal $J_i$ is generated in one degree.
In this case, our characterization depends on the \emph{analytic spread} of certain ideals; recall that the analytic spread of an ideal $I \subset R$ is given by $\ell(I) := \dim\Big(\Rees(I)/\mm\Rees(I)\Big)$.

\begin{theorem}
	\label{thm_equigen_ideals}
	Let $R$ be a finitely generated standard graded domain over a field $\kk$ with graded irrelevant ideal $\mm \subset R$.
	Let $J_0 \subset R$ be an $\mm$-primary ideal and $J_1,\ldots,J_p \subset R$ be arbitrary ideals.
	Suppose that, for each $0 \le i \le p$, $J_i$ is generated by homogeneous elements of the same degree $d_i>0$.
	Let $\bn=(n_0,n_1,\ldots,n_p) \in \NN^{p+1}$ such that $\lvert \bn \rvert = \dim(R)-1$.
	Then, $e_\bn\left(J_0\mid J_1,\ldots,J_p\right) > 0$ if and only if for each $\fJ = \{j_1,\ldots,j_k\} \subseteq [p]$ the inequality 
	$$
	n_{j_1} + \cdots + n_{j_k} \;\le\; \ell\big(J_{j_1}\cdots J_{j_k}\big) - 1
	$$
	holds. 
\end{theorem}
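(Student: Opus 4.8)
The plan is to deduce Theorem~\ref{thm_equigen_ideals} from \autoref{cor_mixed_mult_ideals} by computing the two geometric quantities appearing there---namely $\dim(T/\fP)$ for the relevant minimal primes $\fP$ and the projection dimensions $\dim\big(T/(\fP+\sum_{j\notin\fJ}\mm_jT)\big)$---explicitly in terms of analytic spreads. First I would exploit the hypothesis that $R$ is a domain: then $\Rees=\Rees(J_0,\dots,J_p)$ is a domain as well (it sits inside $R[t_0,\dots,t_p]$), and hence so is its quotient by the prime $J_0\Rees$ provided $J_0\Rees$ is prime. Here one uses that $J_0$ is $\mm$-primary so that $\dim(R/J_0)=0$ and $(0:_T\nn^\infty)$ can be analyzed; the upshot I expect is that $T=\Rees\otimes_R R/J_0$ modulo its $\nn$-saturation has a unique minimal prime of maximal dimension, so condition (a) of \autoref{cor_mixed_mult_ideals} is automatic and only condition (b) survives. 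This reduction is where the ``domain'' and ``generated in one degree'' hypotheses do their work: equigeneration guarantees that the multi-Rees algebra is generated in the right degrees so that $\multProj(T)$ behaves like a multiprojective variety whose projections have clean dimension formulas.

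The key computation is then: for $\fJ\subseteq[p]$ with $0\notin\fJ$ (the case $0\in\fJ$ contributes the $n_0$ term, which is pinned down by $\lvert\bn\rvert=\dim(R)-1$ together with the $J_0$-primary condition, giving $n_0\le \dim(R)-1$ with no further constraint), the quantity $\dim\big(T/(\fP+\sum_{j\notin\fJ}\mm_jT)\big)-k$ should equal $\ell(J_{j_1}\cdots J_{j_k})$. The mechanism I would use: killing $\mm_j$ for $j\notin\fJ$ in the multi-Rees algebra amounts to setting $t_j=0$, which collapses $\Rees(J_0,\dots,J_p)$ onto $\Rees(J_0)\otimes$(the multi-Rees algebra in the remaining variables), and further tensoring down by $R/J_0$ (i.e. passing to $T$) together with the fact that $J_0$ is $\mm$-primary turns the relevant fiber into the special fiber $\Rees(\prod_{j\in\fJ}J_j)/\mm\,\Rees(\prod_{j\in\fJ}J_j)$ up to the extra grading contributions. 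Concretely one invokes the standard fact $\ell(I)=\dim\big(\Rees(I)\otimes_R R/\mm\big)$ and the identity (for equigenerated ideals in a domain) that $\ell(J_{j_1}\cdots J_{j_k})$ records the dimension of the image of the map to projective space defined by the degree-$(d_{j_1}+\cdots+d_{j_k})$ forms spanning $J_{j_1}\cdots J_{j_k}$; this is exactly $\dim\Pi_\fJ$ of the relevant variety. Matching the ``$-k$'' shifts carefully (each $\mm_j$ for $j\in\fJ$ contributes one to Krull dimension, and the $R/J_0$ quotient removes the contribution of the remaining geometric dimension of $\Spec R$ beyond the fiber) should give the stated inequality.

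The main obstacle I anticipate is the bookkeeping that identifies $\dim\big(T/(\fP+\sum_{j\notin\fJ}\mm_jT)\big)-k$ with $\ell\big(J_{j_1}\cdots J_{j_k}\big)-1$ \emph{on the nose}, including the single off-by-one ``$-1$'' that distinguishes this theorem's clean form from the raw output of \autoref{cor_mixed_mult_ideals}. This requires tracking how the $\mm$-primary ideal $J_0$ interacts with the saturation: one must show $(0:_T\nn^\infty)$ has the expected minimal primes and that $\dim(T/(0:_T\nn^\infty))=r=\dim(R)-1$ (which is cited from \cite[Theorem~1.2(a)]{TRUNG_VERMA_MIXED_VOL}), and then that the relevant minimal prime $\fP$ of $T$ is the contraction of the zero ideal, so that $T/\fP$ is essentially $\Rees(J_0,\dots,J_p)/(J_0)$ modulo nilpotents---a domain. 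A secondary technical point is handling the index $0$: since $J_0$ is $\mm$-primary, $\ell(J_0)=\dim R$, so the would-be inequality for any $\fJ$ containing $0$ reads $n_0+\cdots\le \dim(R)-1$ after the shift, which is implied by $\lvert\bn\rvert=\dim(R)-1$ and nonnegativity of the other $n_j$ combined with the $\fJ\setminus\{0\}$ inequalities; I would check that this ``$0\in\fJ$ is never binding'' phenomenon holds so that the final criterion only needs to range over $\fJ\subseteq[p]$ as stated. Once these identifications are in place, the theorem follows immediately by substituting into \autoref{cor_mixed_mult_ideals}.
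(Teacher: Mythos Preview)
Your approach is essentially the paper's: apply \autoref{cor_mixed_mult_ideals}, show the relevant quotient is a domain so condition (a) collapses, compute the projection dimensions as analytic spreads via $\Rees/\sum_{h\notin\fK}\mm_h\cong\Rees(J_{h_1},\ldots,J_{h_s})$ together with a Segre embedding, and then argue that the inequalities with $0\in\fK$ are redundant. The two spots you flag as obstacles are resolved in the paper as follows. First, rather than hoping $J_0\Rees$ is prime (it need not be), the paper passes to $T':=\Rees\otimes_R R/\mm$; since $J_0$ is $\mm$-primary the kernel of $T\twoheadrightarrow T'$ is nilpotent, so the conditions of \autoref{cor_mixed_mult_ideals} are unchanged, and equigeneration gives an identification of $T'$ with the explicit subring $\FF=\bigoplus_{i_0,\ldots,i_p\ge0} [J_0^{i_0}]_{i_0d_0}\cdots[J_p^{i_p}]_{i_pd_p}\,t_0^{i_0}\cdots t_p^{i_p}$ of the domain $\Rees$, so $T'$ is a domain. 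Second, for $\fK=\{0,h_2,\ldots,h_s\}$ the paper does not compute $\ell(J_0J_{h_2}\cdots J_{h_s})$; instead it recognizes $\Rees(J_0,J_{h_2},\ldots,J_{h_s})\otimes_R R/J_0$ as the associated graded ring $\gr_{J_0\Rees'}(\Rees')$ with $\Rees'=\Rees(J_{h_2},\ldots,J_{h_s})$, whose dimension equals $\dim(\Rees')=\dim(R)+s-1$, so the inequality becomes $n_0+n_{h_2}+\cdots+n_{h_s}\le\dim(R)-1=\lvert\bn\rvert$ and is automatic.
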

\begin{proof}
	First, note that $r = \dim(R)-1$.
	
	Since $J_0$ is $\mm$-primary, the kernel of the canonical map $T \twoheadrightarrow T' := \Rees \otimes_R R/\mm$ is nilpotent. 
	Therefore, the conditions (a),(b) in \autoref{cor_mixed_mult_ideals} are satisfied for $T$ if and only if they are satisfied for $T'$.
	
	Consider the $\NN^{p+1}$-graded domain given by 
	$$
	\FF \;:=\; \bigoplus_{i_0 \ge 0, \ldots,i_p \ge 0} \left[J_0^{i_0}\right]_{i_0d_0}\left[J_1^{i_1}\right]_{i_1d_1}\cdots \left[J_p^{i_p}\right]_{i_pd_p} t_0^{i_0}t_1^{i_1}\cdots t_p^{i_p} \; \subset \; \Rees.
	$$	
	Since ${[J_0^{i_0}]}_{i_0d_0}{[J_1^{i_1}]}_{i_1d_1}\cdots {[J_p^{i_p}]}_{i_pd_p} \cong J_0^{i_0}J_1^{i_1}\cdots J_p^{i_p} \otimes_R R/\mm$, we have the isomorphism $T' \cong \FF$ and so $T'$ is a domain.
	
	For any $\fK = \{h_1,\ldots,h_s\} \subseteq \{0\} \cup [p]$, since we have the natural isomorphism 
	$
	\Rees\big/\sum_{h \not\in \fK}\mm_h \;\cong \; \Rees(J_{h_1},\ldots,J_{h_s}) = R[J_{h_1}t_{h_1},\ldots,J_{h_s}t_{h_s}],
	$
	it follows that 
	$$
	\dim\big(T'\big/\sum_{h \not\in \fK}\mm_hT'\big) = \dim\big(\Rees(J_{h_1},\ldots,J_{h_s}) \otimes_R R/\mm\big).
	$$
	After using the Segre embedding we get the isomorphism $\multProj\big(\Rees(J_{h_1},\ldots,J_{h_s}) \otimes_R R/\mm\big) \cong \Proj\big(\Rees(J_{h_1}\cdots J_{h_s}) \otimes_R R/\mm\big)$ and,  accordingly, from \autoref{lem_basics_projections}(i)  we have 	
	$$
	\dim\big(\Rees(J_{h_1},\ldots,J_{h_s}) \otimes_R R/\mm\big) = \dim\big(\Rees(J_{h_1}\cdots J_{h_s}) \otimes_R R/\mm\big) +s-1 = \ell(J_{h_1}\cdots J_{h_s})+s-1,
	$$
(also, see \cite[Corollary 3.10]{bivia2020analytic}). 

So, $e_\bn\left(J_0\mid J_1,\ldots,J_p\right) > 0$ if and only if for each $\fK = \{h_1,\ldots,h_s\} \subseteq {0} \cup [p]$ the inequality 
	$
	n_{h_1} + \cdots + n_{h_s} \;\le\; \dim(T'/\sum_{h \not\in \fK}\mm_hT') - s = \ell(J_{h_1}\cdots J_{h_s})-1
	$
	holds. 
		
	For any $\fK = \{0,h_2,\ldots,h_s\} \subseteq \{0\} \cup [p]$, as $J_0$ is $\mm$-primary, from \cite[Theorem 5.1.4, Proposition 5.1.6]{huneke2006integral} we obtain
	\begin{align*}
	\dim\big(T'\big/\sum_{h \not\in \fK}\mm_hT'\big) =  \dim\big(\Rees(J_0,J_{h_2},\ldots,J_{h_s}) \otimes_R R/J_0\big) &= \dim\Big(\gr_{_{J_0\Rees(J_{h_2},\ldots,J_{h_s})}}\big(\Rees(J_{h_2},\ldots,J_{h_s})\big) \Big)\\
	&=\dim(R) + s -1.
	\end{align*}
	Therefore, we only need check the inequalities corresponding to the subsets $\fJ \subseteq [p]$, and so the result follows.
\end{proof}

\begin{remark}
	Note that in \autoref{thm_equigen_ideals} the conditions for the positivity of $e_\bn\left(J_0\mid J_1,\ldots,J_p\right)$ do not involve the $\mm$-primary ideal $J_0$ (see  \cite[Corollary 1.8(a)]{TRUNG_VERMA_MIXED_VOL}).
\end{remark}

\begin{remark}
We note that if in \autoref{thm_equigen_ideals} we have $\ell(J_i)=\dim(R)$ for every $1\le i\le p$, then by \cite[Lemma 4.7]{hyry2002cohen} for each $\fJ = \{j_1,\ldots,j_k\} \subseteq [p]$ we also have $ \ell\big(J_{j_1}\cdots J_{j_k}\big)=\dim(R)$. Therefore, by \autoref{thm_equigen_ideals} it follows that $e_\bn\left(J_0\mid J_1,\ldots,J_p\right) > 0$ for every $\bn \in \NN^{p+1}$ such that $\lvert \bn \rvert = \dim(R)-1$.
\end{remark}



\section{Polymatroids}\label{sec_polym}

We recall that \autoref{thmA} implies that $\msupp_\PP(X)$ (see \autoref{defMsupp}) is the set of integer points in a polytope when $X$ is irreducible.  In this section we explore properties of these discrete sets.

Following standard notations, we say that $X$ is a \emph{variety} over $\kk$ if $X$ is a reduced and irreducible separated scheme of finite type over $\kk$ (see, e.g., \cite[\href{https://stacks.math.columbia.edu/tag/020C}{Tag 020C}]{stacks-project}). In the following  two results we connect the theory of polymatroids (see \autoref{sub_Poly}) with $\msupp_\PP(X)$ when $X$ is a variety.

\begin{proposition}\label{prop_poly}
	Let $X\subseteq \PP=\PP_{\kk}^{m_1} \times_\kk \cdots \times_\kk \PP_{\kk}^{m_p}$ be a multiprojective variety over an arbitrary field $\kk$. 
	Then $\msupp_\PP(X)$ is a discrete algebraic polymatroid over $\kk$.
\end{proposition}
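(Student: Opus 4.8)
The plan is to combine \autoref{thm_main_irreducible} with an explicit algebraic representation of the rank function $\fJ\mapsto r(\fJ)=\dim\big(\Pi_\fJ(X)\big)$. First I would reduce to the situation in which $R$ is a domain with $\left(0:_R\nn^\infty\right)=0$ --- exactly as in the first paragraph of the proof of \autoref{thm_main_irreducible}, using \autoref{modSat}(i) and the associativity formula for mixed multiplicities; this reduction changes neither $\msupp_\PP(X)$ nor any of the sets $\Pi_\fJ(X)$, so that $R$ becomes the multihomogeneous coordinate ring of $X$ and $\Pi_\fJ(X)\cong\multProj\big(R_{(\fJ)}\big)$ by \autoref{lem_basics_projections}(iii). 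With this in place, \autoref{thm_main_irreducible} says precisely that $\msupp_\PP(X)$ is the set of $\bx\in\NN^p$ with $\sum_{j\in\fJ}x_j\le r(\fJ)$ for every $\fJ\subsetneq[p]$ and $\sum_{i\in[p]}x_i=r([p])$, so it remains only to produce a field extension $\kk\hookrightarrow\LL$ and intermediate fields $\LL_1,\ldots,\LL_p$ with $r(\fJ)=\trdeg_\kk\big(\bigwedge_{j\in\fJ}\LL_j\big)$ for every $\fJ\subseteq[p]$.

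Since $R$ is a domain, its fraction field $\Quot(R)$ carries a $\ZZ^p$-grading, and I would take $\LL:=\big[\Quot(R)\big]_{\mathbf 0}$ (the function field of $X$) and, for each $i\in[p]$, $\LL_i:=\big[\Quot\big(R_{(i)}\big)\big]_0\subseteq\LL$, the image under the inclusion $R_{(i)}\hookrightarrow R$ of the function field of $\Pi_i(X)=\multProj\big(R_{(i)}\big)$; these are intermediate fields $\kk\subseteq\LL_i\subseteq\LL$. The crux is the identity
\[
\bigwedge_{j\in\fJ}\LL_j=\big[\Quot\big(R_{(\fJ)}\big)\big]_{\mathbf 0}\subseteq\LL \qquad\text{for every }\fJ=\{j_1,\ldots,j_k\}\subseteq[p].
\]
The inclusion $\supseteq$ is clear from $R_{(j)}\subseteq R_{(\fJ)}$ for $j\in\fJ$. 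For $\subseteq$, fix a nonzero $\theta_j\in[R]_{\ee_j}$ for each $j\in\fJ$; since $R_{(\fJ)}$ is standard $\NN^k$-graded, any element of $\big[\Quot(R_{(\fJ)})\big]_{\mathbf 0}$ is a ratio $a/b$ with $a,b\in\big[R_{(\fJ)}\big]_\nu$ for some $\nu\in\NN^k$, and rewriting it as $\big(a/\!\prod_j\theta_j^{\nu_j}\big)\big/\big(b/\!\prod_j\theta_j^{\nu_j}\big)$ and expanding $a$ and $b$ as $\kk$-linear combinations of monomials in elements of degrees $\ee_j$ exhibits both $a/\!\prod_j\theta_j^{\nu_j}$ and $b/\!\prod_j\theta_j^{\nu_j}$ as polynomial expressions in elements $v/\theta_j$ with $v\in[R]_{\ee_j}$, each of which lies in $\big[\Quot(R_{(j)})\big]_0=\LL_j$.

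To finish, for a finitely generated $\NN^k$-graded $\kk$-domain $S$ with $S_0=\kk$ and $\multProj(S)\ne\emptyset$ one has $\dim\multProj(S)=\dim S-k=\trdeg_\kk\Quot(S)-k=\trdeg_\kk\big[\Quot(S)\big]_{\mathbf 0}$, using \autoref{lem_basics_projections}(i),(iv) for the first equality and, for the last, the fact that $\Quot(S)$ is obtained from $\big[\Quot(S)\big]_{\mathbf 0}$ by adjoining $k$ algebraically independent homogeneous elements; applied to $S=R_{(\fJ)}$ together with the displayed identity this gives $\trdeg_\kk\big(\bigwedge_{j\in\fJ}\LL_j\big)=\dim\multProj\big(R_{(\fJ)}\big)=r(\fJ)$. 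Thus $(\LL;\LL_1,\ldots,\LL_p)$ is an algebraic representation of $r$, and since the assignment $\fJ\mapsto\trdeg_\kk\big(\bigwedge_{j\in\fJ}\LL_j\big)$ is automatically non-decreasing and submodular --- for subfields $C\subseteq A\cap B$ of $\LL$ with compositum $AB$ one has $\trdeg_\kk(AB)=\trdeg_\kk A+\trdeg_A(AB)\le\trdeg_\kk A+\trdeg_C B=\trdeg_\kk A+\trdeg_\kk B-\trdeg_\kk C$ --- the set $\msupp_\PP(X)$ is a discrete polymatroid, algebraic over $\kk$. I expect the main obstacle to be the displayed compositum identity, i.e.\ verifying that passing to degree $\mathbf 0$ in $\Quot(R_{(\fJ)})$ produces nothing beyond the subfield generated by the individual $\LL_j$'s; the reduction to a domain and the dimension/transcendence-degree bookkeeping are comparatively routine.
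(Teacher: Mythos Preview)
Your proposal is correct and follows essentially the same approach as the paper: both take $\LL$ to be the function field of $X$ and $\LL_i$ the function field of $\Pi_i(X)$, identify the compositum $\bigwedge_{j\in\fJ}\LL_j$ with the function field of $\Pi_\fJ(X)$, and invoke \autoref{thm_main_irreducible} together with $\dim\Pi_\fJ(X)=\trdeg_\kk\big(\bigwedge_{j\in\fJ}\LL_j\big)$. Your write-up is in fact more detailed than the paper's (which simply asserts the compositum identity and the dimension formula), though note that your labels ``$\supseteq$'' and ``$\subseteq$'' for the two inclusions in the displayed identity are swapped relative to the arguments you give for them.
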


\begin{proof}
	We consider the associated $\NN^p$-graded $\kk$-domain $R$.	
	Let $\xi$ be the generic point of $X$ and set $\LL:=\OO_{X, \xi}$.
	For each $\fJ \subseteq [p]$, let $X_\fJ  = \Pi_\fJ(X) = \multProj\left(R_{(\fJ)}\right)$ and $\xi_\fJ$ be the generic point of  $X_\fJ$, and notice that 
	$$
	\OO_{X_\fJ, \xi_\fJ} = \big\{f/g \mid f,g \in R_{(\fJ)}, \;g \neq 0, \; \deg(f)=\deg(g) \big\} \;\subseteq\; \LL.
	$$
	For $1\le i \le p$, let $\LL_i := \OO_{X_i, \xi_i}$.
	Then, for each $\fJ \subseteq [p]$, we have that $\OO_{X_\fJ, \xi_\fJ} = \bigwedge_{j\in \fJ} \LL_j$
	and that $\dim\left(X_\fJ\right) = \trdeg_\kk\left(\OO_{X_\fJ, \xi_\fJ}\right)$ (see, e.g., \cite[Theorem 5.22]{GORTZ_WEDHORN}, \cite[Exercise II.3.20]{HARTSHORNE}).
	Finally, the result follows from \autoref{thmA}.			
\end{proof}


In  \cite[Corollary 2.8]{TRUNG_POSITIVE} it is shown that the conclusion of   \autoref{thm_main_irreducible} holds if $p=2$ and $X$ is arithmetically Cohen-Macaulay. The following example shows that this result does not always hold for  $p> 2$. 

\begin{example}\label{ex_cm}
Let $S=\kk[x_1,\ldots,x_{12},y_1,\ldots,y_{12}]$  be a polynomial rings with an $\NN^{12}$-grading induced by  $\deg(x_i)=\deg(y_i)=\ee_i$  for $1\le i\le 12$. 
Let $\Delta$ be the simplicial complex given by the boundary of the icosahedron. 
We note that $\Delta$ is a Cohen-Macaulay complex (because it is a triangulation of the sphere $\mathbb{S}^2$ \cite[Corollary II.4.4]{STANLEY}), but it is not a (poly)matroid (see \autoref{rem:matroid}) since not every restriction is pure \cite[Proposition III.3.1]{STANLEY}.

Let $J_\Delta=\{(x_{i_1}+y_{i_1})\cdots (x_{i_k}+y_{i_k})\mid \{i_1,\ldots,i_k\}\notin\Delta\}$ and  $X_\Delta=\multProj(S/J_{\Delta})\subset \PP=\PP_\kk^1\times\cdots\times \PP_\kk^1$. The definition of $J_\Delta$ is a modification on the definition of  $I_\Delta$, the Stanley-Reisner ideal of $\Delta$ with monomials in the variables $\{x_1,\ldots,x_{12}\}$ \cite[Chapter 1]{MILLER_STURMFELS}, \cite[Chapter II]{STANLEY}. 
It can be easily verified that $I_\Delta$ is the  initial ideal of $J_\Delta$ with respect to any elimination order with  $\{x_1,\ldots,x_{12}\} \ge \{y_1,\ldots,y_{12}\}$. Since the ideal $J_\Delta$ is obtained  from $I_\Delta$ by a linear change of variables, we have a similar primary decomposition as \cite[Theorem 1.7]{MILLER_STURMFELS}, so no component is supported on any coordinate subspace and thus $J_\Delta$ is saturated with respect to the irrelevant ideal of $S$. 
By \cite[Corollary 3.3.5]{HERZOG_HIBI_MONOMIALS}, $X_\Delta$ is arithmetically Cohen-Macaulay. 
 Moreover, since Hilbert functions are preserved by Gr\"obner degenerations, the multidegree of $X_{\Delta}$ coincides with $\mathcal{C}(S/I_\Delta;\ttt)$ (see \autoref{rem_Hilb_series}). Thus,  $\msupp_\mathbb{P}(X_\Delta)$ consists of all the incidence vectors of the facets of $\Delta$ \cite[Theorem 1.7]{MILLER_STURMFELS} and then it is not a polymatroid.
\end{example}

With \autoref{prop_poly} in hand, we can introduce the following class of polymatroids.

\begin{definition}\label{def_chow}
A  polymatroid $\mathcal{P}$ is \textit{Chow} over a field $\kk$ if there exists a variety $X\subseteq \PP=\PP_{\kk}^{m_1} \times_\kk \cdots \times_\kk \PP_{\kk}^{m_p}$ such that $\msupp_\PP(X)=\mathcal{P}$.
\end{definition}
The following statement follows as an easy corollary of the main result in \cite{BINGLIN}, when $\kk$ is an infinite field. Here we give a simple direct argument for an arbitrary field $\kk$.
\begin{proposition}\label{prop_linear}
A linear polymatroid over an arbitrary field $\kk$ is Chow over the same field.
\end{proposition}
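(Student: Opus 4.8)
The plan is to construct, from a linear representation of a given polymatroid $\mathcal{P}$, an explicit variety $X$ whose multidegree support is exactly $\mathcal{P}$; the example in the Introduction is precisely an instance of this construction and we follow its pattern. So suppose $\mathcal{P}$ on $[p]$ has rank function $r(\fJ)=\dim_\kk\!\left(\sum_{j\in\fJ}V_j\right)$ for a $\kk$-vector space $V$ with subspaces $V_1,\dots,V_p$. Set $d:=\dim_\kk\!\left(\sum_{j\in[p]}V_j\right)=r([p])$ and fix a basis $v_1,\dots,v_d$ of this sum, so that each $V_j$ is spanned by certain linear forms in the $v_i$. Let $S=\kk[v_1,\dots,v_d][w_1,\dots,w_p]$ with $\deg(v_i)=\mathbf 0$ and $\deg(w_j)=\ee_j$, and for each $j$ choose linear forms $\ell_{j,0}=1,\ell_{j,1},\dots,\ell_{j,m_j}$ in $v_1,\dots,v_d$ whose span is $\langle 1\rangle+V_j$ (equivalently: pick enough linear forms that their span, together with the constants, has dimension $r(\{j\})+1$ — one can always take $m_j=r(\{j\})$). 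Define the $\NN^p$-graded $\kk$-algebra homomorphism $\varphi\colon T=\kk[\{x_{j,i}\}_{0\le i\le m_j,\,1\le j\le p}]\to S$ by $x_{j,i}\mapsto \ell_{j,i}\,w_j$, where $\deg(x_{j,i})=\ee_j$. Let $\fP=\ker(\varphi)$, an $\NN^p$-graded prime ideal since $S$ is a domain, and let $X=\multProj(T/\fP)\subseteq\PP=\PP_\kk^{m_1}\times_\kk\cdots\times_\kk\PP_\kk^{m_p}$; this $X$ is a variety.

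The key computation is then: for every $\fJ=\{j_1,\dots,j_k\}\subseteq[p]$ one has $\dim\big(\Pi_\fJ(X)\big)=r(\fJ)$. Granting this, \autoref{thmA} (in the irreducible form, \autoref{thm_main_irreducible}) gives $\msupp_\PP(X)=\{\bn\in\NN^p:\lvert\bn\rvert=\dim(X),\ \sum_{j\in\fJ}n_j\le r(\fJ)\ \forall\fJ\}$, which is exactly the base polymatroid of $\mathcal{P}$; note $\dim(X)=r([p])$ since $X$ surjects onto each factor only up to the ranks, and the maximal-weight condition is $\lvert\bn\rvert=r([p])$, matching the definition of $\mathcal{P}$. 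To compute $\dim\big(\Pi_\fJ(X)\big)$, observe that by \autoref{eq_isom_0} (or \autoref{lem_basics_projections}) this dimension equals $\dim\big(\multProj(\mathrm{image\ of\ }\varphi|_{T_{(\fJ)}})\big)$, i.e.\ the dimension of the multiprojective variety parametrized by the monomials $\ell_{j,i}w_j$ for $j\in\fJ$. The affine cone over this image sits inside $\Spec$ of the subalgebra of $S$ generated by $\{\ell_{j,i}w_j:j\in\fJ,\,0\le i\le m_j\}$; its dimension is the transcendence degree over $\kk$ of the field generated by the ratios $\ell_{j,i}/\ell_{j,0}=\ell_{j,i}$ (for $j\in\fJ$) together with the $k=\lvert\fJ\rvert$ independent monomials $w_{j_1},\dots,w_{j_k}$. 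The ratios $\{\ell_{j,i}:j\in\fJ,\,1\le i\le m_j\}$ are linear forms in the $v$'s spanning $\sum_{j\in\fJ}V_j$, so their transcendence degree is $\dim_\kk\!\left(\sum_{j\in\fJ}V_j\right)=r(\fJ)$. Hence $\dim\big(\Pi_\fJ(X)\big)=r(\fJ)+k-k=r(\fJ)$ after the usual $\multProj$ normalization (subtracting $k$ for the cone directions $w_{j_\bullet}$ as in \autoref{lem_basics_projections}(i)); care is needed to track these $+k$ and $-k$ shifts consistently, but they cancel.

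The main obstacle — and the point to be careful about — is the bookkeeping in the dimension computation: one must correctly relate $\dim\big(\Pi_\fJ(X)\big)$ (a multiprojective dimension) to the transcendence degree of the relevant function field, using \autoref{lem_basics_projections}(i) which says $\dim = \dim(R/(0:\nn^\infty)) - p$ and its analogue for each $R_{(\fJ)}$. Concretely, the function field of $\Pi_\fJ(X)$ is the degree-zero part of $\mathrm{Quot}$ of the image subalgebra, and one checks that inverting one variable $w_{j_1}$ and passing to degree zero identifies this with $\kk\big(\ell_{j,i}/1:j\in\fJ\big)\big(w_{j_2}/w_{j_1},\dots,w_{j_k}/w_{j_1}\big)$, whose transcendence degree over $\kk$ is $r(\fJ)+(k-1)$; then the $\multProj$ shift subtracts $k$... — so in fact it is cleanest to argue with $k=\lvert\fJ\rvert$ factors directly: $\dim\big(\Pi_\fJ(X)\big) = r(\fJ) + (k-1) - (k-1)$ using that $\Pi_\fJ(X)$ is the image in a product of $k$ projective spaces. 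The verification that the generic $w$-coordinates contribute exactly the expected amount (no unexpected algebraic relations among the $w_j$ and the $\ell_{j,i}$, since $w_j$ are genuine variables independent of the $v$'s) is routine but should be spelled out. Once this lemma is in place the proposition is immediate from \autoref{thm_main_irreducible}.
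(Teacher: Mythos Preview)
Your construction is essentially the same as the paper's: map a multigraded polynomial ring $T$ into $S[t_1,\dots,t_p]$ (your $w_j$ are the paper's $t_j$) by $x_{j,0}\mapsto w_j$ and $x_{j,i}\mapsto \ell_{j,i}w_j$, take $X=\multProj(T/\ker\varphi)$, and show $\dim(\Pi_\fJ(X))=r(\fJ)$ for all $\fJ$, after which \autoref{thm_main_irreducible} finishes the proof. So the strategy matches exactly.

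The genuine gap is in your dimension computation. In the \emph{multi}projective setting, the function field of $\Pi_\fJ(X)=\multProj(R_{(\fJ)})$ consists of ratios $f/g$ with $\deg(f)=\deg(g)\in\NN^k$, i.e.\ the degree in \emph{each} $w_j$ must match separately. Hence $w_{j_2}/w_{j_1}$ is \emph{not} a function on $\Pi_\fJ(X)$ (it has multidegree $\ee_{j_2}-\ee_{j_1}\neq\mathbf 0$), and your identification of the function field with $\kk(\ell_{j,i})(w_{j_2}/w_{j_1},\dots,w_{j_k}/w_{j_1})$ is incorrect. This is why your $+k$'s and $-k$'s refuse to line up: at one point you get transcendence degree $r(\fJ)+(k-1)$ and then have to subtract $k$, which would give the wrong answer.

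The paper avoids this by computing the Krull dimension of the ring rather than the function field of the scheme. Since $R_{(\fJ)}$ is isomorphic to the subring $\kk[\ell_{j,i}w_j : j\in\fJ,\;1\le i\le m_j][w_j:j\in\fJ]$ of $S$, its transcendence degree over $\kk$ is visibly $\trdeg_\kk\,\kk[\ell_{j,i}:j\in\fJ][w_j:j\in\fJ]=r(\fJ)+|\fJ|$ (the $w_j$ are algebraically independent over $\kk[v_1,\dots,v_d]$). Then \autoref{lem_basics_projections}(i) gives $\dim(\Pi_\fJ(X))=\dim(R_{(\fJ)})-|\fJ|=r(\fJ)$, with no ambiguity. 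If you prefer to argue via function fields, the correct observation is that the multidegree-zero fractions are generated by $\ell_{j,i}w_j/w_j=\ell_{j,i}$ for $j\in\fJ$, so the function field is exactly $\kk(\ell_{j,i}:j\in\fJ)$, of transcendence degree $r(\fJ)$.
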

\begin{proof}
Let $V$ be a $\kk$-vector space and $V_1,\ldots,V_p$ be arbitrary subspaces. 
Let $S$ be the polynomial ring $S = \Sym(V) = \kk[x_1,\ldots,x_q]$, where $q = \dim_\kk(V)$.
By using the isomorphism $[S]_1 \cong V$, we identify each $V_i$ with a $\kk$-subspace $U_i$ of $[S]_1$.
For each $1 \le i \le p$, let $\{x_{i,1},x_{i,2},\ldots,x_{i,r_i}\} \subset [S]_1$ be a basis of the $\kk$-vector space $U_i$.
Let $T$ be the $\NN^p$-graded polynomial ring 
$$
T := \kk\left[y_{i,j} \mid 1 \le i \le p, \,0 \le j \le r_i,\, \deg(y_{i,j}) = \ee_i\right].
$$
Induce an $\NN^p$-grading on $S[t_1,\ldots,t_p]$ given by $\deg(t_i) = \ee_i$ and $\deg(x_j) = \mathbf{0}$.
Consider the $\NN^p$-graded $\kk$-algebra homomorphism 
$$
\varphi = T \rightarrow S[t_1,\ldots,t_p], \qquad 
\begin{array}{l}
	y_{i,0} \mapsto  t_i \quad\;\, \text{ for } \;  1 \le i \le p \\
	y_{i,j} \mapsto  x_{i,j}t_i \; \text{ for } \;  1 \le i \le p,\, 1 \le j \le r_i.
\end{array}
$$
Note that $\fP := \Ker(\varphi)$ is an $\NN^p$-graded prime ideal. 
Set $R:= T/\fP$ and $X := \multProj(R)$.
By construction, for each $\fJ \subseteq [p]$, we obtain the isomorphism 
$$
R_{(\fJ)} \;\cong\; \kk\left[x_{i,j}t_i \mid  i \in \fJ,\, 1 \le j \le r_i\right]\left[t_i\mid i \in \fJ\right] \;\subset\; S[t_1,\ldots,t_p];
$$
thus, it is clear that 
\begin{align*}
	\dim\left(R_{(\fJ)}\right) &= \trdeg_\kk\big(\kk\left[x_{i,j}t_i \mid  i\in \fJ,\, 1 \le j \le r_i\right]\left[t_i\mid i \in \fJ\right]\big)\\
	&= \trdeg_\kk\big(\kk\left[x_{i,j}\mid  i\in \fJ,\, 1 \le j \le r_i\right]\left[t_i\mid i \in \fJ\right]\big)\\
	&= \dim_\kk\left(\sum_{i\in\fJ} U_i\right) + \lvert \fJ \rvert = \dim_\kk\left(\sum_{i\in\fJ} V_i\right) + \lvert \fJ \rvert.
\end{align*}
Therefore, \autoref{lem_basics_projections} yields that $\dim\left(\Pi_{\fJ}(X)\right)= \dim_\kk\left(\sum_{i\in\fJ} V_i\right)$, and so the result follows from \autoref{thmA}.
\end{proof}

%

The following is the main theorem of this section. Here we summarize the results presented above to show that the class of Chow polymatroids lies in between the ones introduced in \autoref{def_algebraic_polymatroid}.

\begin{theorem}\label{thm_classification}
Over an arbitrary field $\kk$, we have the following inclusions of families of polymatroids
$$
\Big(\texttt{Linear polymatroids}\Big) \;\subseteq\; \Big(\texttt{Chow polymatroids}\Big) \;\subseteq\; \Big(\texttt{Algebraic polymatroids}\Big).
$$	
Moreover, when $\kk$ is a field of characteristic zero, the three families coincide.
\end{theorem}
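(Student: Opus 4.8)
The plan is to read off the first two inclusions from results already proved in this section, and then to treat the characteristic-zero statement separately. The inclusion of the family of linear polymatroids in the family of Chow polymatroids is exactly \autoref{prop_linear}, and the inclusion of Chow polymatroids in algebraic polymatroids follows by combining \autoref{def_chow} with \autoref{prop_poly}: a Chow polymatroid is, by definition, of the form $\msupp_\PP(X)$ for a multiprojective variety $X$ over $\kk$, and \autoref{prop_poly} identifies any such $\msupp_\PP(X)$ as a discrete algebraic polymatroid over $\kk$. Since the chain $\text{Linear} \subseteq \text{Chow} \subseteq \text{Algebraic}$ holds over any field, the assertion that all three families coincide when $\operatorname{char}(\kk)=0$ reduces to the single extra inclusion $\text{Algebraic} \subseteq \text{Linear}$ in that case.

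For this inclusion I would first reduce from polymatroids to matroids. Every discrete polymatroid $\mathcal{P}$ on $[p]$ is a \emph{merge} of a matroid: there exist a ground set $E = E_1 \sqcup \cdots \sqcup E_p$ and a matroid $N$ on $E$ with $r_{\mathcal{P}}(\fJ) = r_N\!\big(\bigcup_{j\in\fJ} E_j\big)$ for all $\fJ \subseteq [p]$. Given an algebraic representation $(\LL;\LL_1,\ldots,\LL_p)$ of $\mathcal{P}$ one may, after replacing each $\LL_i$ by the subfield generated over $\kk$ by a finite transcendence basis of $\LL_i/\kk$ (this leaves every $\trdeg_\kk\!\big(\bigwedge_{j\in\fJ}\LL_j\big)$ unchanged, since the compositum becomes algebraic over this smaller subfield), assume $\LL$ is finitely generated over $\kk$; taking $E_i$ to be such a finite transcendence basis exhibits $N$ as algebraic over $\kk$, and conversely a $\kk$-linear representation of $N$ produces, through the same merge, a $\kk$-linear representation of $\mathcal{P}$. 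So it suffices to prove that a matroid algebraic over a characteristic-zero field is linear over that field --- the classical theorem of Ingleton. Its proof runs through the Jacobian criterion: if $N$ is realized by $z_1,\ldots,z_q$ inside a finitely generated extension $\LL/\kk$ of transcendence degree $d$, then $\LL/\kk$ is separably generated (automatic in characteristic zero), so $\Omega_{\LL/\kk}$ is an $\LL$-vector space of dimension $d$, and a subset $\{z_i : i \in I\}$ is algebraically independent over $\kk$ precisely when $\{dz_i : i \in I\}$ is linearly independent over $\LL$; hence $z_e \mapsto dz_e$ is a linear representation of $N$ over $\LL$, which one then descends to $\kk$ by a standard specialization argument (pick a separating transcendence basis, clear denominators, and specialize at a sufficiently general point, using that a characteristic-zero field is infinite).

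The genuinely characteristic-zero ingredient is the Jacobian criterion --- equivalently, the equality $\dim_\LL \Omega_{\LL/\kk} = \trdeg_\kk \LL$ and its compatibility with the subfields $\LL_i$ --- and this is exactly what breaks in positive characteristic, where $d(a^{\operatorname{char}\kk})=0$ destroys the link between algebraic and linear independence; accordingly the three families separate there (see \autoref{rem_Alg_not_line}). I expect the main obstacle to be this differential-theoretic core together with the descent of the $\Omega_{\LL/\kk}$-representation to $\kk$ itself; the merge reduction between matroids and polymatroids is routine, and if one prefers one can cite Ingleton's theorem directly and supply only that reduction to obtain the polymatroid version.
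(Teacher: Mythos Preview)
Your proposal is correct and aligns with the paper's proof: the two inclusions are exactly \autoref{prop_linear} and \autoref{prop_poly}, and the characteristic-zero collapse is Ingleton's theorem. The only difference is in how the polymatroid case of Ingleton is handled. The paper simply cites \cite[Corollary, Page 166]{INGLETON} and remarks (\autoref{IngletonRem}) that the matroid argument there ``goes unchanged for polymatroids,'' i.e.\ one runs the derivation map $\LL_i \mapsto \text{image of } d:\LL_i \to \Omega_{\LL/\kk}$ directly on the subfields. You instead insert an explicit merge reduction---choose transcendence bases $E_i$ of the $\LL_i$, pass to the algebraic matroid on $\bigsqcup_i E_i$, apply Ingleton for matroids, and then merge the resulting linear representation back into subspaces $V_i$. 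Your route is a bit more bookkeeping but has the advantage of isolating the genuinely nontrivial content (Ingleton for matroids) cleanly; the paper's route is shorter but asks the reader to rerun Ingleton's proof in the polymatroid setting. Either way the substantive input is the same Jacobian/derivation argument, and your identification of the descent from $\LL$ to $\kk$ as the delicate point is accurate.
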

\begin{proof}
The first inclusion follows from \autoref{prop_linear}; the second from \autoref{prop_poly}.
In the characteristic zero case linear and algebraic polymatroids coincide by \cite[Corollary, Page 166]{INGLETON} (also, see \autoref{IngletonRem}). 
\end{proof}
\begin{remark}\label{IngletonRem}
The result mentioned above from \cite{INGLETON} is stated for matroids but the arguments go unchanged for polymatroids.
\end{remark}
\begin{remark}\label{rem_Alg_not_line}
Over finite fields there are algebraic matroids that are not linear. An example is the Non-Pappus matroid described in \cite[Page 517]{OXLEY}, it is algebraic over any field of positive characteristic but not linear over any field.
\end{remark}
Classifying linear polymatroid rank functions is a difficult problem.  For linear matroids over a field of characteristic zero, the poetically titled \textit{``The missing axiom of matroid theory is lost forever''} \cite{AXIOM_LOST} together with a recent addition \cite{AXIOM_LOST_RECENT} shows that there is no finite list of axioms that characterize which rank functions are linear. For fields of positive characteristic,  Rota conjectured in 1971 that for each field there is a list of finite restrictions. A proof of Rota's conjecture has been announced by Geelen, Gerards, and Whittle, but expected to be several hundred of pages long. Little is known about the algebraic case. In \cite{NONALGEBRAIC} there is an example of a matroid that is not algebraic over any field: the Vamos matroid $V_8$ \cite[Page 511]{OXLEY}. For these reasons we do not expect a further characterizations of Chow polymatroids.

We finish this section with the following question.

\begin{question}
Are all algebraic polymatroids Chow? 
\end{question}

\section{Applications}\label{sec_comb} 

In this section  we  relate our results to several objects from combinatorial algebraic geometry.

\subsection{Schubert polynomials}
\label{subsect_Schubert}

Let $\fS_p$ be the symmetric group on the set $[p]$. For every $i\in[p-1]$ we have the transposition $s_i:=(i,i+1)\in \fS_p$. Recall that the set $S=\langle s_i, 1\leq i< p \rangle$ generates $\fS_p$. The \emph{length} $l(\pi)$ of a permutation $\pi$ is the least amount of elements in $S$ needed to obtain $\pi$. Alternatively, the length is equal to the number of \emph{inversions}, i.e., $\l(\pi)=\{(i,j)\in[p]\times[p]:i<j,\; \pi(i)>\pi(j)\}$. 
The permutation $\pi_0=(p,p-1,\cdots,2,1)$ (in one line notation) is the longest permutation, it has length $\frac{p(p-1)}{2}$.
\begin{definition}
The {\it Schubert polynomials} $\fS_\pi\in\ZZ[t_1,\ldots,t_p]$ are defined recursively  in the following  way. 
First we define $\fS_{\pi_0}:=\prod_{i} t_i^{p-i}$, and for any permutation $\pi$ and transposition $s_i$ with $l(s_i\pi)<l(\pi)$ we let
\[
\fS_{s_i\pi} = \dfrac{\fS_{\pi}-s_i\fS_{\pi}}{t_i-t_{i+1}},
\]
where $\fS_{p}$ acts on $\ZZ[t_1,\ldots,t_p]$ by permutation of variables. For more information see \cite[Chapter 10]{FULTON_TABLEAUX}. 
\end{definition}

Next we define {\it matrix Schubert varieties} following \cite[Chapter 15]{MILLER_STURMFELS}. Let $\kk$ be an algebraic closed field and  $M_p(\kk)$ be the $\kk$-vector space of $p\times p$ matrices with entries in $\kk$. As an affine variety we define its coordinate ring as $R_p:=\kk[x_{ij}:(i,j)\in[p]\times[p]]$. Furthermore we consider an $\NN^p$-grading on $R_p$ by letting $\deg(x_{ij}) = \ee_i$.

\begin{definition} 
Let $\pi$ be a permutation matrix. The matrix Schubert variety $\overline{X_\pi}\subset M_p(\kk)$ is the subvariety
\[
\overline{X_\pi}=\{Z\in M_p(\kk)\mid \rank(Z_{m\times n})\leq \rank(\pi_{m\times n}) \; \text{ for all }\; m,n\},
\]
where $Z_{m\times n}$ is the restriction to the first $m$ rows and $n$ columns. This is an irreducible variety
 and the prime ideal $I\left(\overline{X_\pi}\right)$ is multihomogeneous \cite[Theorem 15.31]{MILLER_STURMFELS}. By  \cite[Theorem 15.40]{MILLER_STURMFELS}, the Schubert polynomial $\fS_\pi$  equals the multidegree polynomial of the variety corresponding to the ideal $I\left(\overline{X_\pi}\right)$ (see \autoref{defMsupp}).
\end{definition}

Following \cite{YONG} we say a polynomial $f=\sum_{\bn}c_{\bn}\ttt^\bn\in \ZZ[t_1,\cdots,t_p]$ have the Saturated Newton Polytope property (SNP for short) if $\text{supp}(f):=\{\bn\in\NN^p\mid c_\bn>0\}=\text{ConvexHull}\{\bn\in\NN^p\mid c_\bn>0\}\cap\NN^p$, in other words, if the support of $f$ consist of the integer points of a polytope.
In \cite[Conjecture 5.5]{YONG} it was conjectured that the Schubert polynomials have SNP property and they even conjectured a set of defining inequalities for the Newton polytope in \cite[Conjecture 5.13]{YONG}. 
A.~Fink, K.~M\'ez\'aros, and A.~St.~Dizier confirmed the full conjecture in \cite{FINK}.  As noted by the authors of \cite{huh2019logarithmic} the combination of \autoref{prop_poly} (they use the equivalent \cite[Corollary 10.2]{LORENTZ}) and \cite[Theorem 15.40]{MILLER_STURMFELS} (which is also included in \cite[Theorem 6]{huh2019logarithmic}) is enough to give an alternative proof to  \cite[Conjecture 5.5]{YONG}.

\begin{theorem}\label{thm_Schubert}
For any permutation $\pi$, the Schubert polynomial $\fS_\pi$ has SNP and its Newton polytope is a polymatroid polytope.
\end{theorem}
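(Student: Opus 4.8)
The plan is to realise $\fS_\pi$ as the multidegree polynomial of an irreducible multiprojective variety and then quote \autoref{prop_poly}. Set $R := R_p/I\left(\overline{X_\pi}\right)$. Since $I\left(\overline{X_\pi}\right)$ is a multihomogeneous prime ideal of $R_p$, the ring $R$ is a standard $\NN^p$-graded $\kk$-domain satisfying \autoref{setup_initial}, and I write $X := \multProj(R) \subseteq \PP := \PP_\kk^{p-1} \times_\kk \cdots \times_\kk \PP_\kk^{p-1}$, with $p$ factors because each degree $\ee_i$ is carried by the $p$ variables $x_{i1},\ldots,x_{ip}$. The first point to settle is that $X$ is nonempty, so that it is a variety over $\kk$ in the sense needed by \autoref{prop_poly}. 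Put $\mm_i = \left([R_p]_{\ee_i}\right)$ and $\nn = \mm_1 \cap \cdots \cap \mm_p$: the permutation matrix $\pi \in \overline{X_\pi}$ has a nonzero entry in every row, so $\pi \notin V(\mm_i)$ for all $i$, and since $\overline{X_\pi}$ is irreducible it cannot be contained in $V(\mm_1) \cup \cdots \cup V(\mm_p) = V(\nn)$. Hence $I\left(\overline{X_\pi}\right) \not\supseteq \nn$, so $X \neq \emptyset$, and --- $I\left(\overline{X_\pi}\right)$ being prime and not containing $\nn$ --- it is $\nn$-saturated, i.e.\ $\left(0:_R \nn^\infty\right) = 0$.

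With this in place, \autoref{rem_Hilb_series} gives $\deg_\PP(X;\ttt) = \mathcal{C}(R;\ttt)$, and \cite[Theorem 15.40]{MILLER_STURMFELS} identifies $\mathcal{C}(R;\ttt)$ with the Schubert polynomial $\fS_\pi$. Thus $\fS_\pi = \deg_\PP(X;\ttt)$, and by \autoref{defMsupp}, writing $\mathbf{m} = (p-1,\ldots,p-1)$,
$$
\text{supp}(\fS_\pi) \;=\; \left\{\, \mathbf{m} - \bn \;:\; \bn \in \msupp_\PP(X)\, \right\} \;\subseteq\; \prod_{i=1}^{p}[0,p-1].
$$
By \autoref{prop_poly}, $\msupp_\PP(X)$ is a discrete polymatroid on $[p]$, and by \autoref{thm_main_irreducible} it equals $\left\{\bn \in \NN^p : \lvert\bn\rvert = r,\ \sum_{j\in\fJ} n_j \le r(\fJ)\ \text{for all}\ \fJ \subseteq [p]\right\}$, where $r = \dim(X)$ and $\fJ \mapsto r(\fJ) := \dim\left(\Pi_\fJ(X)\right)$ is submodular and nondecreasing with $r(\{i\}) \le p-1$. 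It then remains to check that the image of such a polymatroid under $\bn \mapsto \mathbf{m} - \bn$ is again a discrete polymatroid: substituting $n_j \mapsto (p-1) - n_j$ and using $\lvert\mathbf{m} - \bn\rvert = p(p-1) - r$, the inequality indexed by $\fJ$ becomes $\sum_{j\in [p]\setminus\fJ} n_j \le \lvert[p]\setminus\fJ\rvert(p-1) - r([p]) + r(\fJ)$. Hence $\text{supp}(\fS_\pi)$ is the discrete polymatroid with rank function $\fK \mapsto r'(\fK) := \lvert\fK\rvert(p-1) - r([p]) + r\left([p]\setminus\fK\right)$; submodularity of $r'$ is submodularity of $r$ read on complements, and monotonicity and nonnegativity of $r'$ follow from $r(\fT) - r(\fT') \le \sum_{j\in\fT\setminus\fT'} r(\{j\}) \le \lvert\fT\setminus\fT'\rvert(p-1)$ for $\fT' \subseteq \fT$. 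Therefore $\text{supp}(\fS_\pi)$ is exactly the set of lattice points of a polymatroid polytope, which is the assertion that $\fS_\pi$ has SNP with polymatroid Newton polytope.

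The one place that takes care is the bookkeeping in the first two paragraphs: passing between the affine matrix Schubert variety $\overline{X_\pi}$, the multiprojective scheme $\multProj(R)$, and the two incarnations of ``multidegree'' --- the $K$-polynomial invariant $\mathcal{C}(R;\ttt)$ versus the polynomial $\deg_\PP(X;\ttt)$ of \autoref{defMsupp} --- in particular verifying $\nn$-saturation so that \autoref{rem_Hilb_series} applies and matching exponent conventions so that $\fS_\pi = \deg_\PP(X;\ttt)$ on the nose. Once that identification is secured, the statement is \autoref{prop_poly} together with the elementary fact that box-complementation preserves discrete polymatroids.
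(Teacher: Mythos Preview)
Your proof is correct and follows the same approach the paper sketches: combine \cite[Theorem 15.40]{MILLER_STURMFELS} with \autoref{prop_poly}, then observe that the passage from $\msupp_\PP(X)$ to the Newton polytope of $\fS_\pi$ is a box-complementation that preserves the polymatroid property. You supply considerably more detail than the paper does---in particular the verification of $\nn$-saturation needed to invoke \autoref{rem_Hilb_series}, and the explicit check (with the dual rank function $r'$) that $\bn \mapsto \mathbf{m}-\bn$ sends polymatroids to polymatroids, which the paper merely asserts in the remark following the theorem.
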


The Newton polytope of a polynomial $f$ is by definition the convex hull of the exponents in the support of $f$, however in by our convention in \autoref{defMsupp} $\msupp$ consists of the complementary exponents. This does not change the conclusion that the resulting polytope is a polymatroid polytope.

\medskip

{\bf Codimensions of projections.} 
We now use \autoref{thmA} to give a combinatorial interpretation for the codimensions of the natural projections of matrix Schubert varieties. First we need some terminology.

A {\it  diagram} $D$ is a subset of a $p\times p$ grid whose boxes are indexed by  the set $[p]\times [p]$.  
The authors of \cite{YONG} define a function $\theta_D:2^{[p]}\mapsto \ZZ$ as follows: for a subset $\fJ\subseteq [p]$ and   $c\in [p]$ ,  we construct a word $W_D^c(\fJ)$ by reading the column $c$ of $[p]\times [p]$ from top to bottom  and recording
\begin{itemize}
\item $($ if $(r,c)\notin D$ and $r\in \fJ$,
\item $)$ if $(r,c)\in D$ and $r\notin \fJ$,
\item $\star$ if $(r,c)\in D$ and $r\in \fJ$;
\end{itemize}
 let $\theta_D^c(\fJ)=\#\textrm{ paired ``()" in }W_D^c(\fJ)+\#\star\textrm{ in } W_D^c(\fJ)$, and finally $\theta_D(\fJ)=\sum_{i=1}^p \theta_D^i(\fJ)$.
\begin{example}\label{ex_diagram}
For examplae, let $D$ be the diagram depicted in  \autoref{fig:diagram} and $\fJ=\{2,3\}$, then $\theta_D(\fJ)=3$. 

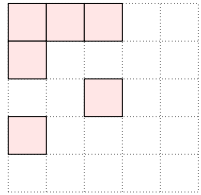
\begin{figure}[ht]
  \scalebox{.5}{
	\begin{tikzpicture}

	\draw[dotted] (0,0)--(5,0)--(5,5)--(0,5)--(0,0);
	\draw[dotted] (1,0)--(1,5);
	\draw[dotted] (2,0)--(2,5);
	\draw[dotted] (3,0)--(3,5);
	\draw[dotted] (4,0)--(4,5);
	\draw[dotted] (0,1)--(5,1);
	\draw[dotted] (0,2)--(5,2);
	\draw[dotted] (0,3)--(5,3);
	\draw[dotted] (0,4)--(5,4);
	\draw[fill=red!10] (0,5)--(1,5)--(1,4)--(0,4)--cycle;
	\draw[fill=red!10] (1,5)--(2,5)--(2,4)--(1,4)--cycle;
	\draw[fill=red!10](2,5)--(3,5)--(3,4)--(2,4)--cycle;
	\draw[fill=red!10](0,4)--(1,4)--(1,3)--(0,3)--cycle;
	\draw[fill=red!10](0,2)--(0,1)--(1,1)--(1,2)--cycle;
	\draw[fill=red!10] (2,3)--(2,2)--(3,2)--(3,3)--cycle;\
		\end{tikzpicture} }
	\caption{Example of a diagram in $[5]\times[5]$}
	\label{fig:diagram} 
\end{figure}
\end{example}

For any $\pi\in\fS_p$ we can define its {\it Rothe diagram} as  $$D_\pi:=\{(i,j)\mid 1\leq i,j\leq n, \pi(i)>j\text{ and }\pi^{-1}(j)>i\}\subset[p]\times[p].$$ For example when $\pi=42531$ then $D_\pi$ is the diagram of \autoref{fig:diagram}.

\begin{theorem} \label{thm_projections_schubert}
 Let $\pi\in\fS_p$, then 
for any $\fJ\subseteq [p]$ the projection $\Pi_\fJ\left(\overline{X_\pi}\right)$ onto the rows indexed by $\fJ$ has codimension $\theta_{D_\pi}([p])-\theta_{D_\pi}(\fJ')$, where $\fJ'=[p]\backslash\fJ$ is the complement of $\fJ$.
\end{theorem}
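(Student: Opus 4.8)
The plan is to compute $\dim\left(\Pi_\fJ(\overline{X_\pi})\right)$ by describing the Newton polytope of $\fS_\pi$ in two ways — geometrically, through \autoref{thmA} applied to $\overline{X_\pi}$, and combinatorially, through the known description of $\mathrm{Newton}(\fS_\pi)$ in terms of $\theta_{D_\pi}$ — and then matching the resulting rank functions.

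First I would pass to the multiprojective scheme $X_\pi := \multProj\left(R_p/I\left(\overline{X_\pi}\right)\right) \subseteq \PP := (\PP_\kk^{p-1})^p$. Since $\overline{X_\pi}$ is irreducible and not contained in any coordinate hyperplane $V(\mm_i)$, $R_p/I\left(\overline{X_\pi}\right)$ is a domain with $\left(0:\nn^\infty\right)=0$; hence, as recalled before \autoref{thm_Schubert} (this is \cite[Theorem 15.40]{MILLER_STURMFELS} together with \autoref{rem_Hilb_series}), $\fS_\pi = \deg_\PP(X_\pi;\ttt)$, so by \autoref{defMsupp} the support of $\fS_\pi$ equals $\left\{(p-1)\mathbf{1}-\bn \;:\; \bn\in\msupp_\PP(X_\pi)\right\}$. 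Applying \autoref{thm_main_irreducible} to $X_\pi$ and substituting $\bm = (p-1)\mathbf{1}-\bn$, one finds that $\bm$ lies in the support of $\fS_\pi$ exactly when $\lvert\bm\rvert = l(\pi)$ and
$$
\sum_{j\in\fK} m_j \;\ge\; \lvert\fK\rvert\, p - \dim\left(\Pi_\fK(X_\pi)\right) \;=\; \codim\left(\Pi_\fK(\overline{X_\pi})\right) \qquad \text{for every } \fK\subseteq[p],
$$
where I used the affine-cone identity $\dim\left(\Pi_\fK(X_\pi)\right) = \dim\left(\Pi_\fK(\overline{X_\pi})\right) - \lvert\fK\rvert$ (a consequence of \autoref{lem_basics_projections}) and read codimension inside the space of $\fK\times[p]$ matrices. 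Taking $\fK = \fJ'$ and using $\lvert\bm\rvert = l(\pi)$, this says precisely that $\mathrm{Newton}(\fS_\pi)$ is the base polymatroid polytope with rank function $\fJ\mapsto l(\pi) - \codim\left(\Pi_{\fJ'}(\overline{X_\pi})\right)$.

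On the other hand, the theorem of Fink--M\'ez\'aros--St.~Dizier \cite{FINK} (establishing \cite[Conjecture 5.13]{YONG}) identifies $\mathrm{Newton}(\fS_\pi)$ with the base polymatroid polytope whose rank function is $\theta_{D_\pi}$; and a one-line check gives $\theta_{D_\pi}([p]) = \lvert D_\pi\rvert = l(\pi)$, since for $\fJ=[p]$ the word $W_{D_\pi}^c([p])$ contains only the symbols ``$($'' and ``$\star$'', so $\theta_{D_\pi}^c([p])$ counts the boxes of $D_\pi$ in column $c$. Because the rank function of a polymatroid is uniquely recovered from its base polytope, as $r(\fJ) = \max\left\{\sum_{j\in\fJ} x_j : \bx\in\mathcal{P}\right\}$ — and both functions above are the genuine tight rank function of this common polytope, the first by \autoref{prop_poly} and the second by \cite{FINK} — the two descriptions force $\theta_{D_\pi}(\fJ) = l(\pi) - \codim\left(\Pi_{\fJ'}(\overline{X_\pi})\right)$ for every $\fJ$. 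Rewriting as $\codim\left(\Pi_{\fJ'}(\overline{X_\pi})\right) = \theta_{D_\pi}([p]) - \theta_{D_\pi}(\fJ)$ and renaming $\fJ'\rightsquigarrow\fJ$ yields the claim.

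I expect the main friction to lie in the bookkeeping of the second paragraph: keeping straight the passage between the affine variety $\overline{X_\pi}$ and its multiprojectivization $X_\pi$, the complementation $\ttt\mapsto\mathbf{1}-\ttt$ built into \autoref{defMsupp}, and the set-complement $\fK\leftrightarrow\fJ'$, so that every inequality ends up correctly oriented. The only other point needing a word is the uniqueness of a polymatroid's rank function (so that two valid submodular descriptions of the same base polytope must coincide); this is standard via the greedy algorithm, but one should verify that the function produced by \autoref{thm_main_irreducible} is the genuine tight rank function of $\mathrm{Newton}(\fS_\pi)$, which is exactly the assertion that $\msupp_\PP(X_\pi)$ is a polymatroid in \autoref{prop_poly}.
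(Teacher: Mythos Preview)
Your approach is essentially identical to the paper's: both combine \autoref{thmA} with the Fink--M\'ez\'aros--St.~Dizier description of $\mathrm{Newton}(\fS_\pi)$ and match the two resulting systems of inequalities, the only difference being that you phrase the matching abstractly via uniqueness of polymatroid rank functions while the paper does it by direct algebraic rewriting. One small slip: in your displayed inequality the middle term should read $(p-1)\lvert\fK\rvert - \dim\left(\Pi_\fK(X_\pi)\right)$ (equivalently $p\,\lvert\fK\rvert - \dim\left(\Pi_\fK(\overline{X_\pi})\right)$), not $p\,\lvert\fK\rvert - \dim\left(\Pi_\fK(X_\pi)\right)$; the final equality with $\codim\left(\Pi_\fK(\overline{X_\pi})\right)$ and the rest of the argument are unaffected.
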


\begin{proof}
In \cite[Theorem 10]{FINK} the authors givea proof of  \cite[Conjecture 5.13]{YONG}, which in our setup (recall the indexing in \autoref{defMsupp}) states that $\msupp(\overline{X_\pi})$ is equal to
\[
\left\{\bn \in \NN^p \;\mid\; \sum_{j\in\fJ} \left((p-1)-n_j \right)\leq \theta_{D_\pi}(\fJ), \;\forall \fJ\subsetneq [p], \;\sum_{j\in [p]}\left((p-1)-n_j \right)=\theta_{D_\pi}([p])\right\}.
\]
The first inequalities can be rewritten as $(p-1)|\fJ|-\theta_{D_\pi}(\fJ)\leq \sum_{j\in\fJ}n_j$, and combining them with $(p-1)p-\theta_{D_\pi}([p])=\sum_{j\in[p]}n_i$ we obtain
\[
\sum_{j\in \fJ'} n_j \leq (p-1)|\fJ'|-\bigg( \theta_{D_\pi}([p])-\theta_{D_\pi}(\fJ)\bigg).
\]
By \autoref{thmA} we must have that $\codim\left(\Pi_{\fJ}\left(\overline{X_\pi}\right)\right)= \theta_{D_\pi}([p])-\theta_{D_\pi}(\fJ')$ for every $\fJ\subseteq [p]$,  as we wanted to show.
\end{proof}

\begin{remark}
Notice that $\theta_{D_\pi}([p])$ counts the total number of boxes in $D_\pi$, which is equal to the length of $\pi$ (see \cite[Definition 15.13]{MILLER_STURMFELS}). So the case $\fJ=[p]$ of \autoref{thm_projections_schubert} above is equivalent to the well-known fact that the codimesion of  a matrix Schubert variety is equal to the length of the permutation (see \cite[Theorem 15.31]{MILLER_STURMFELS}).
\end{remark}

\subsection{Flag varieties} \label{sec_flag}
We now focus on a multiprojective embedding of flag varieties. 
We first review some terminology. 
For more information the reader is referred to \cite{FULTON_TABLEAUX} or \cite{BRION}.

In this subsection we work over an algebraically closed field $\kk$. Consider the \emph{complete flag variety} $Fl(V)$ of a $\kk$-vector space $V$ of dimension $p+1$. This variety parametrizes complete flags, i.e., sequences $V_\bullet := (V_0,\cdots,V_{p+1})$ such that $\{0\}=V_0\subset V_1 \subset V_2 \subset \cdots \subset V_{p} \subset V_{p+1}=V,$
and each $V_i$ is a linear subspace of $V$ of dimension $i$. 
One can embed this variety in a product of Grassmannians $Fl(V) \hookrightarrow \textrm{Gr}(1,V)\times \textrm{Gr}(2,V) \times \cdots\times \textrm{Gr}(p,V)$ as the subvariety cut out by incidence relations. 

Furthermore, each Grassmannian can be embedded in a projective space via the Pl\"ucker embedding $\iota_i:\textrm{Gr}(i,V) \rightarrow\PP_\kk^{m_i}$ for $1\leq i\leq p$. 
By considering the product of these maps, we obtain a multiprojective embedding of $\iota: Fl(V)\hookrightarrow \PP_\kk^{m_1}\times_\kk\cdots \times_\kk\PP_\kk^{m_p}$. 
For convenience we also call $\iota$ the {\it Pl\"ucker embedding}.
The proposition below computes the corresponding multidegree support.

\begin{proposition}\label{prop_sottile}
Let $V$ be a $\kk$-vector space of dimension $p+1$ and let $X$ be the image of the Pl\"ucker embedding $\iota: Fl(V)\hookrightarrow \PP = \PP_\kk^{m_1}\times_\kk\cdots\times_\kk\PP_\kk^{m_{p}}$, then
\begin{equation}\label{eq:flag_simple}
\msupp_\PP(X)=\left\{\bn\in\NN^{p} \;\mid\; 1\leq n_k\leq \sum_{j=1}^{k}(p-j)-\sum_{i=1}^{k-1} n_i, \;\; \forall k\in [p],\;\; \sum_{j=1}^p n_j = \binom{p+1}{2} \right\};
\end{equation}

\end{proposition}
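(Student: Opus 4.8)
The plan is to read off \eqref{eq:flag_simple} from \autoref{thmA}. Since the Plücker embedding $\iota$ is a closed immersion, $X\cong Fl(V)$ is an irreducible variety, so \autoref{thm_main_irreducible} applies directly: $\msupp_\PP(X)$ is the set of $\bn\in\NN^p$ with $\lvert\bn\rvert=\dim(X)$ and $\sum_{j\in\fJ}n_j\le\dim\big(\Pi_\fJ(X)\big)$ for every $\fJ\subseteq[p]$. The proof therefore splits into computing the numbers $\dim(X)$ and $\dim\big(\Pi_\fJ(X)\big)$ and then rewriting this linear system in the compact shape of \eqref{eq:flag_simple}.

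For the dimensions, fix $\fJ=\{j_1<\cdots<j_k\}\subseteq[p]$. After composing with the Plücker maps, $\Pi_\fJ\big|_X$ is the forgetful morphism $Fl(V)\to\textrm{Gr}(j_1,V)\times\cdots\times\textrm{Gr}(j_k,V)$ given by $(V_0,\ldots,V_{p+1})\mapsto(V_{j_1},\ldots,V_{j_k})$. Every partial flag of type $\fJ$ extends to a complete one, so this morphism surjects onto the partial flag variety $Fl_\fJ(V)$, and hence $\dim\big(\Pi_\fJ(X)\big)=\dim Fl_\fJ(V)$. Writing $j_0=0$ and $j_{k+1}=p+1$, the variety $Fl_\fJ(V)$ is $GL(V)$ modulo a parabolic with block sizes $j_1-j_0,\ldots,j_{k+1}-j_k$, so
\[
\dim\big(\Pi_\fJ(X)\big)=\sum_{0\le a<b\le k}(j_{a+1}-j_a)(j_{b+1}-j_b)=\binom{p+1}{2}-\sum_{a=0}^{k}\binom{j_{a+1}-j_a}{2}.
\]
Taking $\fJ=[p]$ gives $\dim(X)=\binom{p+1}{2}$, which forces the equation $\sum_{j=1}^{p}n_j=\binom{p+1}{2}$ in \eqref{eq:flag_simple}.

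Next comes the combinatorial part. Using $\lvert\bn\rvert=\binom{p+1}{2}$, the inequality $\sum_{j\in\fJ}n_j\le\dim\big(\Pi_\fJ(X)\big)$ is equivalent to $\sum_{i\in[p]\setminus\fJ}n_i\ge\sum_{a=0}^{k}\binom{j_{a+1}-j_a}{2}$. The complement $[p]\setminus\fJ$ decomposes as the disjoint union of the consecutive blocks $\{j_a+1,\ldots,j_{a+1}-1\}$, and $j_{a+1}-j_a$ is one more than the size of the $a$-th block, so the whole family of \autoref{thm_main_irreducible} inequalities is equivalent to the single family
\[
\sum_{i=c}^{d}n_i\ \ge\ \binom{d-c+2}{2}\qquad\text{for all}\ 1\le c\le d\le p
\]
(one implication by adding the interval bounds block by block, the other by taking $\fJ$ whose complement is a single interval); the case $c=d$ already forces $\bn\ge\mathbf{1}$. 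One then reorganizes this interval system, isolating the contributions of the nested subsets $\{1,\ldots,k\}\subseteq[p]$, to arrive at the inequalities displayed in \eqref{eq:flag_simple}. I expect this last step — checking that the short list in \eqref{eq:flag_simple} cuts out exactly the same polytope as the full list of \autoref{thm_main_irreducible} inequalities — to be the principal obstacle and the place where care is needed; by contrast the geometric input, namely that $\Pi_\fJ(X)$ is the partial flag variety and the evaluation of its dimension, is entirely classical.
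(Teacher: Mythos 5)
Your argument follows the paper's proof in all essentials: you identify $\Pi_\fJ(X)$ with the partial flag variety $Fl_\fJ(V)$, compute its dimension by the formula $\sum_{a<b}d_ad_b$ (your equivalent form $\binom{p+1}{2}-\sum_a\binom{d_a}{2}$ is also correct), and feed these into \autoref{thm_main_irreducible} to obtain the full system of inequalities, which is the paper's \autoref{eq:flag}. Where you add value is the clean intermediate reformulation: passing to complements and decomposing $[p]\setminus\fJ$ into maximal blocks shows that the full system is equivalent to the interval inequalities $\sum_{i=c}^d n_i\ge\binom{d-c+2}{2}$ for $1\le c\le d\le p$; one direction by adding block-by-block, the other by choosing $\fJ=[p]\setminus\{c,\dots,d\}$. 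That equivalence is correct and is a genuinely sharper formulation than the paper's, which passes from \autoref{eq:flag} to \autoref{eq:flag_simple} with only ``it can be checked.''

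The step you wisely defer --- reorganizing the interval system into \autoref{eq:flag_simple} --- is exactly where a real problem surfaces: \autoref{eq:flag_simple} as printed is not equivalent to \autoref{eq:flag}. Its constraint for $k=p$ reads $\sum_{i=1}^p n_i\le\sum_{j=1}^p(p-j)=\binom{p}{2}$, contradicting the side condition $\sum_j n_j=\binom{p+1}{2}$; already for $p=2$ the right-hand side of \autoref{eq:flag_simple} is the empty set, whereas $\msupp_\PP(X)=\{(1,2),(2,1)\}$ (the incidence hypersurface in $\PP_\kk^2\times_\kk\PP_\kk^2$ has class $H_1+H_2$). Even the natural correction $\sum_{j=1}^k(p+1-j)=kp-\binom{k}{2}=\dim\Pi_{\{1,\dots,k\}}(X)$ encodes only the prefix constraints of \autoref{eq:flag} and is strictly weaker: at $p=3$ it admits $(1,1,4)$, which has $n_3=4>\dim\Pi_3(X)=3$ and so is excluded by \autoref{eq:flag} and by your interval system ($n_1+n_2=2<\binom{3}{2}$). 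So treat your interval form, or \autoref{eq:flag}, as the answer; it cannot be ``reorganized'' into \autoref{eq:flag_simple} because the latter is misstated, and the deferred verification, had you carried it out, would have revealed this.
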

\begin{proof}
 We need to compute the dimension of $\Pi_\fJ(X))$ for each $\fJ=\{j_1,\ldots,j_k\} \subseteq [p]$. 
The key observation is that $\Pi_\fJ(X)$ is isomorphic to the \emph{partial flag variety} $Fl_{\fJ}(V)$: it parametrizes flags $W_\bullet := \{0\}=V_0\subset V_{2} \subset V_{2} \subset \cdots \subset V_{k} \subset V_{k+1}=V,$ where $\dim V_{k}=j_k$. Hence
\[
\dim\left(\Pi_\fJ(X)\right) = \dim\left(Fl_{\fJ}(V)\right) = \sum_{1\leq i<j\leq k+1} d_id_j = \mathcal{S}(\fJ);
\]
here, for each $\fJ =\{j_1,\ldots,j_k\} \subseteq [p]$, we set $\mathcal{S}(\fJ) := \sum_{1\leq i<j\leq k+1} d_id_j$, where $d_i:=j_i-j_{i-1}$ and by convention $j_0:=0,\, j_{k+1}:=p+1$.
For a proof of the second equality see \cite[\S 1.2]{BRION}. From \autoref{thmA} it follows that
\begin{equation}\label{eq:flag}
\msupp_\PP(X)=\left\{\bn\in\NN^{p} \;\mid\; \sum_{j\in\fJ}n_j\leq \mathcal{S}(\fJ), \;\; \forall\fJ\subseteq[p],\;\; \sum_{j=1}^p n_j = \binom{p+1}{2} \right\}.
\end{equation}
It can be checked that the description  in \autoref{eq:flag} coincides with the one in  \autoref{eq:flag_simple}.
\end{proof}

The pullbacks of the classes $H_i$ from $\PP_\kk^{m_1}\times_\kk\cdots\times_\kk\PP_\kk^{m_p}$ to $Fl(V)$ are called the Schubert divisors, so \autoref{prop_sottile} amounts to a criterion for which powers of these classes intersect. These intersections are called Grassmannian Schubert problems in \cite{PURBHOO_SOTTILE}. In \cite[Theorem 1.2]{PURBHOO_SOTTILE} K.~Purbhoo and F.~Sottile give a stronger statement by providing an explicit combinatorial formula using \emph{filtered tableau} to compute the exact intersection numbers.

\subsection{A multiprojective embedding of $\overline{M}_{0,p+3}$}\label{sec_M0}

The moduli space $\overline{M}_{0,p+3}$ parametrizes rational stable curves with $p+3$ marked points. 
Here we apply our methods to an embedding considered in \cite{MONKS}. 
The starting point is the closed embedding $\Psi_p: \overline{M}_{0,p+3}\longrightarrow \overline{M}_{0,p+2}\times_\kk \PP_\kk^p$ constructed by S.~Keel and J.~Tevelev in \cite[Corollary 2.7]{KEEL_TEVELEV}. 
By iterating this construction we obtain an embedding $\overline{M}_{0,p+3}\hookrightarrow \PP_\kk^1\times_\kk\PP_\kk^2\times_\kk\cdots\times_\kk\PP_\kk^p$ (see \cite[Corollary 3.2]{MONKS}). 
In \cite{MONKS}, R.~Cavalieri, M.~Gillespie, and L.~Monin computed the corresponding multidegree which turns out to be related to parking functions. 
As an easy consequence of our \autoref{thmA}, we can compute its support.

\begin{proposition}\label{prop_parking}
	Let $X$ be the image of  $\overline{M}_{0,p+3}\hookrightarrow \PP =  \PP_\kk^1\times_\kk\PP_\kk^2\times_\kk\cdots\times_\kk\PP_\kk^p$, then 
	\begin{equation}\label{eq:M0n}
	\msupp_\PP(X)=\left\{\bn\in\NN^p\;\mid\; \sum_{i=1}^k n_i\leq k,\; \forall 1\leq k\leq p-1,\; \sum_{i=1}^p n_i=p\right\}.
	\end{equation}

\end{proposition}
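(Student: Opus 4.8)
The plan is to apply \autoref{thmA} in its irreducible form \autoref{thm_main_irreducible} to $X$, so that the whole computation reduces to determining $\dim\big(\Pi_\fJ(X)\big)$ for every $\fJ\subseteq[p]$. The closed immersion $\overline{M}_{0,p+3}\hookrightarrow\PP$ identifies $X$ with the irreducible variety $\overline{M}_{0,p+3}$, of dimension $(p+3)-3=p$, so the condition $\lvert\bn\rvert=\dim(X)$ in \autoref{thm_main_irreducible} is exactly $\sum_{i=1}^{p}n_i=p$, the last constraint in \autoref{eq:M0n}; it remains to recognize the polytope cut out by the inequalities $\sum_{j\in\fJ}n_j\le\dim\big(\Pi_\fJ(X)\big)$.

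The two geometric inputs I would extract from \cite{KEEL_TEVELEV} and \cite{MONKS} are: (1) for each $1\le k\le p$, projection onto the first $k$ factors $\PP_\kk^1\times_\kk\cdots\times_\kk\PP_\kk^k$ restricts on $X$ to the morphism forgetting the last $p-k$ marked points, which is surjective onto the embedded copy of $\overline{M}_{0,k+3}$, so that $\dim\big(\Pi_{[k]}(X)\big)=k$; and (2) for each $1\le i\le p$, the single projection $\Pi_i\colon X\to\PP_\kk^{i}$ is dominant, so that $\dim\big(\Pi_i(X)\big)=i$. Granting these, the dimension of an arbitrary projection is forced by monotonicity of $\fJ\mapsto\dim\big(\Pi_\fJ(X)\big)$ (composing with a further projection cannot increase the dimension of the image): writing $\fJ=\{j_1<\cdots<j_k\}$ and $m=\max(\fJ)=j_k$, the inclusions $\{m\}\subseteq\fJ\subseteq[m]$ give
\[
m=\dim\big(\Pi_{\{m\}}(X)\big)\le\dim\big(\Pi_\fJ(X)\big)\le\dim\big(\Pi_{[m]}(X)\big)=m,
\]
hence $\dim\big(\Pi_\fJ(X)\big)=\max(\fJ)$ for every nonempty $\fJ\subseteq[p]$.

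Feeding this into \autoref{thm_main_irreducible} yields
\[
\msupp_\PP(X)=\Big\{\bn\in\NN^p\;\mid\;\textstyle\sum_{i=1}^{p}n_i=p,\ \ \sum_{j\in\fJ}n_j\le\max(\fJ)\ \text{ for all }\ \fJ\subsetneq[p]\Big\},
\]
and the last step is to match this with \autoref{eq:M0n}. The inequalities for the initial segments $\fJ=[k]$, $1\le k\le p-1$, are precisely $\sum_{i=1}^{k}n_i\le k$; conversely, for an arbitrary $\fJ\subsetneq[p]$ with $m=\max(\fJ)$ one has $\sum_{j\in\fJ}n_j\le\sum_{i=1}^{m}n_i$, which is bounded by $m=\max(\fJ)$ either by the nested inequality (if $m\le p-1$) or by $\sum_{i=1}^{p}n_i=p$ (if $m=p$). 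Thus the nested inequalities already imply all the others, giving \autoref{eq:M0n}.

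The main obstacle is establishing (1) and (2): one must know that the iterated Keel--Tevelev embedding of \cite{MONKS} sits in a commutative square with the forgetful maps $\overline{M}_{0,p+3}\to\overline{M}_{0,k+3}$ and the coordinate projections, and that the projection to each single factor is a Kapranov-type dominant map. These are facts about the geometry of the embedding rather than about multidegrees, and once they are in hand the remainder is the elementary polytope manipulation above. (One could instead read the projection dimensions off the multidegree polynomial computed in \cite{MONKS}, but the point is to obtain $\msupp_\PP(X)$ directly from \autoref{thmA}; note that, as it must by \autoref{prop_poly}, the resulting set is a polymatroid.)
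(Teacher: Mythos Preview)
Your proposal is correct and follows essentially the same route as the paper's proof: compute $\dim\big(\Pi_\fJ(X)\big)=\max(\fJ)$ via the sandwich $\{m\}\subseteq\fJ\subseteq[m]$, then apply \autoref{thmA} and reduce the resulting system of inequalities to the nested ones $\sum_{i=1}^k n_i\le k$. The only cosmetic difference is that the paper obtains your input (2) by induction on $p$ (using that $\Pi_{[p-1]}(X)\cong\overline{M}_{0,p+2}$ and that the claim is already known for the smaller moduli space), rather than by appealing directly to dominance of the Kapranov map; either way the geometric facts are deferred to \cite{KEEL_TEVELEV} and \cite{MONKS}.
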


\begin{proof}
	First, as explained in \cite[\S 3]{MONKS} we have $\dim\left(\Pi_{[p]}(X)\right)=\dim\left( \Pi_{\{p\}}(X)\right)=p$. 
	Also, by construction $\Pi_{[p-1]}(X)\cong\overline{M}_{0,p+2}$, and thus $\dim \left(\Pi_{[i-1]}(X)\right)=i-1$ for every $2\le i\le p$.  
	So, by induction one gets $\dim\left(\Pi_{\{i\}}(X)\right)=i$ for all $1 \le i \le p$.
	
	To use \autoref{thmA}, we must compute $\dim \left(\Pi_{\fJ}(X)\right)$ for all $\fJ \subseteq [p]$. 
	Let $m:=\max\{j \mid j \in\fJ\}$, then as explained above we have 
	$\dim\left(\Pi_{[m]}(X)\right)=\dim\left(\Pi_{\{m\}}(X)\right)=m$ 
	and so we must have $\dim\left(\Pi_{\fJ}(X)\right)=m$. By \autoref{thmA} we obtain that $\msupp_\PP(X)$ is equal to 
	$$\{\bn\in\NN^p \,\mid\, \sum_{i\in\fJ} n_i\leq \max\{j \mid j \in\fJ\},\; \forall \fJ\subseteq[p],\; \sum_{i=1}^p n_i=p\},
	$$ but it is straightforward to check that the inequalities in \autoref{eq:M0n} are enough to describe the same set.
\end{proof}

The cardinality of $\msupp_\PP(X)$ is equal to the Catalan number $C_n$ (see \cite[Exercise 86]{CATALAN}). For a comprehensible survey on Catalan numbers see \cite{CATALAN}.

\subsection{Mixed Volumes}\label{sec_mixed}
In this subsection we assume $\kk$ is an algebraically closed field. We begin by  reviewing the definition of mixed volumes of convex bodies, as a general reference see \cite[Chapter IV]{EWALD}.
Let $\textbf{K}=(K_1,\ldots,K_p)$ be a $p$-tuple of convex bodies in $\mathbb{R}^d$.  The volume polynomial $v(\textbf{K})\in \ZZ[w_1,\ldots,w_p]$ is defined as
\[
v(K_1,\ldots,K_p) :=  \text{Vol}_d(w_1K_1+\cdots+w_pK_p).
\]
This is a homogeneous polynomial of degree $d$. If the coefficients of $v(\textbf{K})$  are written as $\binom{d}{\bn}V(\textbf{K};\bn)w^\bn$, then the numbers $V(\textbf{K};\bn)$ are called the {\it mixed volumes of} $\textbf{K}$. A natural question to ask is: when are mixed volumes positive? The relation between mixed volumes and toric varieties (see  \autoref{eq_mixed-intersection} below) together with \autoref{thmA} allows us to give another proof of a classical theorem  formulated on the non-vanishing of mixed volumes \cite[Theorem 5.1.8]{SCHNEIDER}. 

\begin{theorem}
	\label{thm_mixed}
	Let $\textnormal{\textbf{K}}=(K_1,\ldots,K_p)$ be a $p$-tuple of convex bodies in $\mathbb{R}^d$. 
	Then, $V(\textnormal{\textbf{K}};\bn)>0$ if and only if $\sum_{i=1}^p n_i=d$ and $\sum_{i\in \fJ} n_i\leq \dim\left(\sum_{i\in \fJ} K_i\right)$ for every subset $\fJ \subseteq [p]$.
\end{theorem}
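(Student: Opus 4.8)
The plan is to deduce this from the irreducible case of \autoref{thmA}, namely \autoref{thm_main_irreducible}, together with the classical identification of the multidegrees of a projective toric variety with mixed volumes recorded in \autoref{eq_mixed-intersection}. Since $V(\mathbf{K};\bn)$ is a coefficient of a polynomial that is homogeneous of degree $d$, it vanishes unless $|\bn|=d$, so we may assume $n_1+\cdots+n_p=d$. The implication ``$V(\mathbf{K};\bn)>0$ $\Rightarrow$ the inequalities hold'' is elementary and valid for arbitrary convex bodies: if $\sum_{i\in\fJ}n_i>\dim\big(\sum_{i\in\fJ}K_i\big)$ for some $\fJ\subseteq[p]$, then the $\sum_{i\in\fJ}n_i$ bodies obtained by repeating each $K_i$ ($i\in\fJ$) with multiplicity $n_i$ all lie, after translation, in a linear subspace of dimension $\dim\big(\sum_{i\in\fJ}K_i\big)<\sum_{i\in\fJ}n_i$, which forces the corresponding mixed volume to be $0$. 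The substantial content is the converse, and the first step is to reduce it to the case in which every $K_i$ is a lattice polytope: by monotonicity and continuity of mixed volumes one replaces the $K_i$ by suitable polytopes, and the inequalities $\sum_{i\in\fJ}n_i\le\dim\big(\sum_{i\in\fJ}K_i\big)$ are stable under this since they only involve the dimensions of the affine hulls (one may also invoke Rado's transversal theorem here to witness the inequalities by segments placed inside the $K_i$). I expect this reduction to be the one genuinely delicate point, because the affine hulls of the $K_i$ need not be rational subspaces; everything after it is formal.

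So assume that $\mathbf{K}=\mathbf{P}=(P_1,\dots,P_p)$ with each $P_i\subseteq\mathbb{R}^d$ a lattice polytope. If $\sum_i P_i$ is not full-dimensional then both sides of the asserted equivalence are false --- the inequality for $\fJ=[p]$ would read $d\le\dim\big(\sum_i P_i\big)<d$, and $V(\mathbf{P};\bn)=0$ by the vanishing above --- so we may assume $\dim\big(\sum_i P_i\big)=d$. Translating, we may also assume $0\in P_i$ for every $i$, which affects neither mixed volumes nor the integers $\dim\big(\sum_{i\in\fJ}P_i\big)$. Let $a_i^0=0,a_i^1,\dots,a_i^{m_i}$ be the lattice points of $P_i$, and form the monomial maps
\[
\phi_i:(\kk^\times)^d\longrightarrow\PP_\kk^{m_i},\qquad t\longmapsto\big[t^{a_i^0}:\cdots:t^{a_i^{m_i}}\big].
\]
Let $X$ be the closure, inside $\PP:=\PP_\kk^{m_1}\times_\kk\cdots\times_\kk\PP_\kk^{m_p}$, of the image of $\phi=(\phi_1,\dots,\phi_p)$; it is an irreducible multiprojective variety over $\kk$, so $X=\multProj(R)$ for a multigraded domain $R$. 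The fibres of $\phi$ are the cosets of the subtorus $\{s\in(\kk^\times)^d\mid s^{a_i^j-a_i^0}=1 \text{ for all }i,j\}$, so $\dim(\text{image of }\phi)$ equals the rank of the lattice $\langle a_i^j-a_i^0\rangle=\langle a_i^j\rangle$, which is $\dim\big(\sum_i P_i\big)=d$. Running the same computation on the subproduct indexed by a set $\fJ$ yields
\[
\dim\big(\Pi_\fJ(X)\big)=\dim\Big(\textstyle\sum_{i\in\fJ}P_i\Big)\qquad\text{for every }\fJ\subseteq[p].
\]

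Finally, \autoref{eq_mixed-intersection} identifies the multidegree $\deg_\PP^\bn(X)$ with the Bernstein mixed volume of the tuple that repeats $P_i$ with multiplicity $n_i$, which is a positive rational multiple of $V(\mathbf{P};\bn)$; thus $V(\mathbf{P};\bn)>0$ exactly when $\deg_\PP^\bn(X)>0$. By \autoref{thm_main_irreducible}, $\deg_\PP^\bn(X)>0$ if and only if $n_{j_1}+\cdots+n_{j_k}\le\dim\big(\Pi_\fJ(X)\big)=\dim\big(\sum_{i\in\fJ}P_i\big)$ for every $\fJ=\{j_1,\dots,j_k\}\subseteq[p]$. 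Combining this with the reduction and the easy direction of the first paragraph completes the proof. In short, all of the hard input is absorbed into \autoref{thm_main_irreducible}; what remains is bookkeeping, the one point requiring care being the passage from arbitrary convex bodies to lattice polytopes.
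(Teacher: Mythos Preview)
Your proof is correct and follows the paper's strategy: reduce to lattice polytopes, produce an irreducible multiprojective toric variety whose projection dimensions are the numbers $\dim\big(\sum_{i\in\fJ}P_i\big)$, and apply \autoref{thm_main_irreducible}. The paper's \autoref{lem_reduction} formalizes your reduction step via the segment criterion of \cite[Lemma~5.1.9]{SCHNEIDER}, which is precisely the Rado-type statement you allude to; your remark that the inequalities ``only involve affine hulls'' is not by itself sufficient (passing to inscribed polytopes can shrink those hulls), but the parenthetical about witnessing segments fixes it. The one genuine difference is in how the toric variety is built: you take the Zariski closure of the monomial map on $(\kk^\times)^d$ given by the lattice points of the $P_i$ and read off $\dim(\Pi_\fJ(X))$ as a lattice rank, whereas the paper works on the toric variety $Y$ of the normal fan of $\sum_iP_i$, realizes each $P_i$ as a basepoint-free divisor, computes projection dimensions via the Segre map (\autoref{lem_toric-aux-1}), and scales so that the resulting $\phi$ is a closed embedding (\autoref{lem_toric-aux-2}), after which the projection formula matches multidegrees with the intersection numbers in \autoref{eq_mixed-intersection}. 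Your route is more elementary and avoids the divisor/Segre bookkeeping; the paper's scaling step makes the identification with $V(\mathbf{P};\bn)$ exact rather than only up to the degree of a generically finite map---but, as you correctly note, proportionality is all that the positivity statement requires.
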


We first indicate how to reduce to the case of polytopes. The basic idea is that convex bodies can be approximated by polytopes in the Hausdorff metric \cite[Section 1.8]{SCHNEIDER}. However, the condition for positivity as stated in \autoref{thm_mixed} is a priori not stable under limits.
To fix this we invoke an equivalent condition more suitable for the limiting argument.

\begin{lemma}\label{lem_reduction}
	It suffices to show  \autoref{thm_mixed} 
	 for polytopes.
\end{lemma}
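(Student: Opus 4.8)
The plan is to deduce \autoref{thm_mixed} for an arbitrary tuple $\mathbf K = (K_1,\ldots,K_p)$ of convex bodies from the special case in which every $K_i$ is a polytope, using the monotonicity of mixed volumes together with the observation that the combinatorial condition in \autoref{thm_mixed} is insensitive to replacing each $K_i$ by another convex body with the same affine hull. Indeed, for any choice of points $x_i \in K_i$ one has $\dim\big(\sum_{i \in \fJ} K_i\big) = \dim\big(\sum_{i \in \fJ} \operatorname{lin}(K_i - x_i)\big)$, so this quantity depends only on the affine spans $\operatorname{aff}(K_i)$; hence if $L_i$ is any convex body with $\operatorname{aff}(L_i) = \operatorname{aff}(K_i)$ for all $i$, then $\mathbf K$ satisfies the inequalities of \autoref{thm_mixed} if and only if $(L_1,\ldots,L_p)$ does.

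First I would record two elementary geometric inputs. On the one hand, mixed volumes are monotone under inclusion: $K_i \subseteq L_i$ for every $i$ implies $V(K_1,\ldots,K_p;\bn) \le V(L_1,\ldots,L_p;\bn)$ (see \cite[Section 5.1]{SCHNEIDER}). On the other hand, every convex body $K$ admits both an inner and an outer polytopal approximation with the same affine hull: an inscribed simplex spanned by $\dim(K)+1$ affinely independent points of $K$, and---since $K$ is bounded---a simplex lying inside $\operatorname{aff}(K)$ and containing $K$. Applying this to each $K_i$, I would choose polytopes $P_i \subseteq K_i \subseteq Q_i$ with $\operatorname{aff}(P_i) = \operatorname{aff}(K_i) = \operatorname{aff}(Q_i)$, and set $\mathbf P = (P_1,\ldots,P_p)$ and $\mathbf Q = (Q_1,\ldots,Q_p)$. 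By monotonicity, $V(\mathbf P;\bn) \le V(\mathbf K;\bn) \le V(\mathbf Q;\bn)$, and by the affine-hull remark above, $\mathbf K$, $\mathbf P$ and $\mathbf Q$ satisfy (or fail) the combinatorial condition of \autoref{thm_mixed} simultaneously.

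Granting \autoref{thm_mixed} for polytopes, the reduction is then immediate. If $\mathbf K$ satisfies the inequalities, then so does $\mathbf P$, whence $V(\mathbf P;\bn) > 0$ and therefore $V(\mathbf K;\bn) > 0$. If $\mathbf K$ fails one of the inequalities, then so does $\mathbf Q$, so the polytope case in its contrapositive form gives $V(\mathbf Q;\bn) = 0$ and hence $V(\mathbf K;\bn) = 0$. This establishes both directions of \autoref{thm_mixed} for $\mathbf K$ (the condition $\sum_i n_i = d$ being built into the definition of $V(\mathbf K;\bn)$), completing the reduction.

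The point that requires care---and the reason a direct limiting argument is not enough---is that positivity of a mixed volume is an \emph{open} condition: approximating the $K_i$ by polytopes in the Hausdorff metric and passing to the limit would only yield $V(\mathbf K;\bn) \ge 0$. The device above replaces the limit by a two-sided comparison with polytopes chosen to have exactly the same affine hulls as the $K_i$, which makes the monotonicity inequalities and the invariance of the combinatorial condition available at the same time. The only genuinely new ingredients are the elementary facts that the dimension of a Minkowski sum depends only on the affine hulls of the summands and that inner and outer polytopal approximations preserving the affine hull exist; I expect these to be routine, so the main work of \autoref{thm_mixed} is left to the polytopal case treated afterwards.
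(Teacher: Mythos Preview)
Your proof is correct and takes a genuinely different route from the paper's. The paper argues via continuity of mixed volumes together with an auxiliary equivalence from \cite[Lemma 5.1.9]{SCHNEIDER}, which reformulates the dimensional inequalities as the existence of enough line segments inside the $K_i$ pointing in independent directions; this reformulation is what the paper uses to transfer the combinatorial condition between $\mathbf K$ and approximating polytopes. You bypass both continuity and this external lemma by observing directly that $\dim\big(\sum_{i\in\fJ}K_i\big)$ depends only on the affine hulls $\operatorname{aff}(K_i)$, and then sandwiching each $K_i$ between an inscribed and a circumscribed simplex in the same affine hull. Monotonicity alone then finishes both directions. Your argument is more elementary and self-contained (no appeal to Hausdorff approximation or to \cite{SCHNEIDER} beyond monotonicity), at the small cost of needing a two-sided comparison rather than a one-sided one; the paper's approach, on the other hand, packages the ``stability'' of the combinatorial condition into a known lemma rather than rederiving it.
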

\begin{proof}
	This follows from two facts.
	The first is that mixed volumes $V(\textnormal{\textbf{K}};\bn)$ are continuous \cite[Theorem 5.1.7]{SCHNEIDER} and monotonous \cite[Equation 5.25]{SCHNEIDER} on each entry. 
	The second fact is that for a given sequence $\textnormal{\textbf{K}}=(K_1,\ldots,K_p)\subset\left(\mathbb{R}^d\right)^p$ of convex bodies, by \cite[Lemma 5.1.9]{SCHNEIDER} the following conditions are equivalent:
	\begin{enumerate}
		\item $\sum_{i=1}^p n_i=d$ and $\sum_{i\in \fJ} n_i\leq \dim\left(\sum_{i\in \fJ} K_i\right)$ for every subset $\fJ\subseteq[p]$.
		\item \label{cond_segments} There exist line segments $S_{i,1},S_{i,2},\ldots,S_{i,n_i}\subseteq K_i$, for every $i$, such that $\{S_{i,j}\}_{1\ls i\ls p,1\ls j\ls n_i}$ has segments in $d$ linearly independent directions.
	\end{enumerate}
	We now assume the statement of \autoref{thm_mixed} is true when each $K_i$ is a polytope and show that it follows in the case where each $K_i$ is an arbitrary convex body. 
	
	If $V(\textnormal{\textbf{K}};\bn)>0$ then by continuity we can find polytopes $\textnormal{\textbf{P}}=(P_1,\cdots,P_p)$ with $V(\textnormal{\textbf{P}};\bn)>0$ and $P_i \subseteq K_i$ for each $i\in[p]$. By assumption, the sequence $\textnormal{\textbf{P}}$ satisfies condition \hyperref[cond_segments]{(2)} above and hence so does the sequence $\textnormal{\textbf{K}}$. 
	
	Conversely, suppose condition \hyperref[cond_segments]{(2)} holds for $\textnormal{\textbf{K}}$, then by continuity we can find polytopes $\textnormal{\textbf{P}}=(P_1,\ldots,P_p)$ with $P_i\subseteq K_i$ arbitrarily close so that \hyperref[cond_segments]{(2)}  holds for $\textnormal{\textbf{P}}$ too. 
	Due to the assumption, we have $V(\textnormal{\textbf{P}};\bn)>0$ and thus $V(\textnormal{\textbf{K}};\bn)\geq V(\textnormal{\textbf{P}};\bn)>0$ by monotonicity.
\end{proof}

To finish the proof of \autoref{thm_mixed} we need some preliminary results about toric varieties and lattice polytopes.
As an initial step we recall some facts about basepoint free divisors; a general reference is \cite[Section 6]{COX_TORIC}. Let $\Sigma$ be  a fan and  let $P$ be a lattice polytope  whose normal fan coarsens $\Sigma$.  
Then,  $P$ induces a basepoint free divisor $D_P$ in the toric variety $Y_{\Sigma}$ \cite[Proposition 6.2.5]{COX_TORIC}. Here, being basepoint free means that the complete linear series $|D_P|$ induces a morphism $\phi_{P}:Y_{\Sigma}\rightarrow \PP_{\kk}^{m_i}$ for some $m_i\in\NN$ such that $\phi^*(H)=D_P\in A^*(Y_{\Sigma})$, where $H$ is the class of a hyperplane in the projective space $\PP_{\kk}^{m_i}$.

\begin{lemma}\label{lem_toric-aux-1}	
	Let $K_1,\ldots,K_p$ be lattice polytopes and let $K:=K_1+\cdots+K_p$ be their Minkowski sum . Let  $Y$  be the toric variety associated to $\Sigma$, the normal fan of $K$, then  for each $\fJ\subseteq[p]$ we have a map $\phi_{\fJ}: Y\rightarrow \prod_{j\in\fJ} \PP_{\kk}^{m_j}$ such that $\dim\left(\phi_{\fJ}(Y)\right)=\dim\left(\sum_{i\in\fJ}K_i\right)$.
\end{lemma}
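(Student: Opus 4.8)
The plan is to restrict $\phi_\fJ$ to the dense torus of $Y$, identify its image with a single torus orbit, and compute the dimension of that orbit from lattice‑point combinatorics. First I would build the map. Write $K = K_1+\cdots+K_p$; since the normal fan of a Minkowski sum is the common refinement of the normal fans of the summands (see \cite[\S 6.2]{COX_TORIC}), the normal fan of each $K_j$ coarsens $\Sigma$, so by \cite[Proposition 6.2.5]{COX_TORIC} every $K_j$ determines a basepoint free divisor $D_{K_j}$ on $Y = Y_\Sigma$ together with a morphism $\phi_{K_j}\colon Y\to \PP_\kk^{m_j}$, $m_j = \#(K_j\cap M)-1$, satisfying $\phi_{K_j}^*(H) = D_{K_j}$, where $M$ denotes the character lattice of the torus $T\subseteq Y$. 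I then set $\phi_\fJ := \prod_{j\in\fJ}\phi_{K_j}\colon Y\to \prod_{j\in\fJ}\PP_\kk^{m_j}$; additivity of the polytope‑to‑divisor assignment gives $\phi_\fJ^*\big(\sum_{j\in\fJ}H_j\big) = D_{\sum_{j\in\fJ}K_j}$, although the dimension statement will not use this.

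For the dimension count I would use the standard description of the Kodaira map: on $T = \Hom(M,\kk^*)$ the morphism $\phi_{K_j}$ is given, up to a lattice translation of $K_j$ that does not change the resulting map to projective space, by $t\mapsto [\chi^a(t)]_{a\in K_j\cap M}$. Hence $\phi_\fJ|_T$ is the orbit map $t\mapsto t\cdot x_0$ of the point $x_0 := \phi_\fJ(1)$, for the $T$‑action on $\prod_{j\in\fJ}\PP_\kk^{m_j}$ under which $t$ scales the coordinate of $\PP_\kk^{m_j}$ indexed by $a$ by $\chi^a(t)$. The stabilizer of $x_0$ is $\{t\in T : \chi^{a-a'}(t)=1 \text{ for all } j\in\fJ \text{ and all } a,a'\in K_j\cap M\} = \Hom(M/\Lambda_\fJ,\kk^*)$, where $\Lambda_\fJ\subseteq M$ is the subgroup generated by the differences $a-a'$ with $j\in\fJ$ and $a,a'\in K_j\cap M$; its dimension is $\rank(M)-\rank(\Lambda_\fJ)$. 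By orbit--stabilizer, $\dim\phi_\fJ(T) = \dim(T\cdot x_0) = \rank(\Lambda_\fJ)$, and since $Y$ is irreducible with $T$ dense, $\phi_\fJ(T)$ is dense in $\phi_\fJ(Y)$, so $\dim\phi_\fJ(Y) = \rank(\Lambda_\fJ)$.

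It then remains to identify $\rank(\Lambda_\fJ)$ with $\dim\big(\sum_{i\in\fJ}K_i\big)$. The rank of the finitely generated group $\Lambda_\fJ$ equals the dimension of its $\mathbb{R}$‑span. For fixed $j$, the vertices of the lattice polytope $K_j$ are lattice points and affinely span $K_j$, so $\operatorname{span}_{\mathbb{R}}\{a-a' : a,a'\in K_j\cap M\}$ is the linear subspace $L_j := \operatorname{aff}(K_j)-\operatorname{aff}(K_j)$ parallel to $K_j$, of dimension $\dim K_j$; hence $\operatorname{span}_{\mathbb{R}}\Lambda_\fJ = \sum_{j\in\fJ}L_j$. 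Using that $\operatorname{aff}(A+B)-\operatorname{aff}(A+B) = \big(\operatorname{aff}(A)-\operatorname{aff}(A)\big)+\big(\operatorname{aff}(B)-\operatorname{aff}(B)\big)$ for polytopes $A,B$, one sees $\sum_{j\in\fJ}L_j$ is the linear subspace parallel to $\sum_{i\in\fJ}K_i$, of dimension $\dim\big(\sum_{i\in\fJ}K_i\big)$. Combining the three steps yields $\dim\phi_\fJ(Y) = \dim\big(\sum_{i\in\fJ}K_i\big)$.

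I do not expect a serious obstacle here — the argument is a routine toric computation — but the points that need care are: the conventions in \cite[Proposition 6.2.5]{COX_TORIC} (in particular that the polytope of $D_{K_j}$ is $K_j$ up to a lattice translation, so that the orbit‑map description is literally valid); the fact that $K$ need not be full‑dimensional, so $Y$ need not be complete and one should avoid any properness argument — the torus‑orbit approach does; and keeping the passage between ``rank of a subgroup of $M$'', ``dimension of an $\mathbb{R}$‑span'', and ``dimension of a polytope'' airtight. An alternative that bypasses torus orbits is to post‑compose $\phi_\fJ$ with the Segre embedding and recognize the image as the projective toric variety of the lattice‑point set $\{\sum_{j\in\fJ}a_j : a_j\in K_j\cap M\}$, whose convex hull is $\sum_{i\in\fJ}K_i$; its dimension is $\dim\big(\sum_{i\in\fJ}K_i\big)$ by the standard dimension formula, and this is essentially the same computation phrased differently.
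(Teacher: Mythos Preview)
Your argument is correct. It differs from the paper's proof in the dimension computation: after constructing $\phi_\fJ$ the same way, the paper post-composes with the Segre embedding $\varphi_\fJ\colon\prod_{j\in\fJ}\PP_\kk^{m_j}\to\PP_\kk^m$, observes that $(\varphi_\fJ\circ\phi_\fJ)^*(H)=\sum_{i\in\fJ}D_i=D_{K_\fJ}$ with $K_\fJ=\sum_{j\in\fJ}K_j$, and then invokes \cite[Theorem 6.1.22]{COX_TORIC} to conclude that the image has dimension $\dim K_\fJ$. Your approach instead restricts to the dense torus, identifies $\phi_\fJ(T)$ with a single $T$-orbit, and reads off the dimension via orbit--stabilizer and the rank of the lattice $\Lambda_\fJ$. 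This is exactly the ``alternative'' you sketch in your last paragraph, so you already anticipated the paper's route. Your direct computation is more self-contained and sidesteps a small subtlety in the Segre approach: $\varphi_\fJ\circ\phi_\fJ$ is a priori given only by the sublinear series of $|D_{K_\fJ}|$ spanned by products of sections of the $D_j$, not the complete series---though the dimension is the same because every vertex of $K_\fJ$ is a sum of vertices of the $K_j$. The paper's approach, on the other hand, is shorter once the cited black box is in hand.
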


\begin{proof}	
	The fan $\Sigma$ is the common refinement of the normal fans of $K_1,\ldots,K_p$ \cite[Proposition 7.12]{ZIEGLER}, so each $K_i$ induces a basepoint free divisor $D_i$ on $Y=Y_\Sigma$ and thus also a map $\phi_i: Y\rightarrow \PP_{\kk}^{m_i}$. By the universal property of fiber products these maps induce a canonical map $\phi_{\fJ}: Y\rightarrow \prod_{j\in\fJ} \PP_{\kk}^{m_j}$ for each $\fJ\subseteq[p]$. It remains to compute the dimensions of the images of these maps.
	
	By composing with the Segre embedding $\varphi_{\fJ}:\prod_{j\in\fJ} \PP_{\kk}^{m_j}\rightarrow \PP_{\kk}^m$ we obtain a map $\varphi_{\fJ}\circ\phi_{\fJ}: Y\rightarrow \PP_{\kk}^m$. Let $H$ be the class of a hyperplane in $A^*(\PP_{\kk}^m)$, we have that $\varphi_{\fJ}^*(H)=\sum_{i\in\fJ} H_i\in A^*\left(\prod_{j\in\fJ} \PP_{\kk}^{m_j}\right)$ where the each $H_i$ is the pullback of a hyperplane in the $i$-th factor (see, e.g., \cite[Exercise 5.11]{HARTSHORNE}). 
	Then $(\varphi_{\fJ}\circ\phi_{\fJ})^*(H)=\sum_{i\in\fJ} D_i\in A^*(Y)$. This means that the morphism $\varphi_{\fJ}\circ\phi_{\fJ}$ corresponds to the complete linear series $|D_{K_{\fJ}}|$ where $K_{\fJ}=\sum_{j\in\fJ} K_i$, hence the image has dimension $\dim(K_{\fJ})=\dim(\sum_{j\in\fJ} K_i)$ \cite[Theorem 6.1.22]{COX_TORIC}.
\end{proof}

\begin{lemma} \label{lem_toric-aux-2}	
	In the setup of \autoref{lem_toric-aux-1}, if $\fJ=[p]$ then after scaling each polytope if necessary, $\phi = \phi_{\fJ}$ is an embedding.
\end{lemma}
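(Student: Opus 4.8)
\emph{Proof proposal.} The plan is to realize $\phi$ as a closed immersion after replacing each $K_i$ by a common large dilate $cK_i$. The starting observation, already implicit in the proof of \autoref{lem_toric-aux-1}, is that $\phi^{*}\OO_{\prod_{j}\PP_{\kk}^{m_{j}}}(1,\dots,1)=\OO_{Y}(D_{1}+\cdots+D_{p})$, and after dilating, $\phi^{*}\OO(1,\dots,1)=\OO_{Y}\big(c(D_{1}+\cdots+D_{p})\big)=\OO_{Y}(cD_{K})$, whose associated polytope is $cK=cK_{1}+\cdots+cK_{p}$. Since the normal fan of a Minkowski sum is the common refinement of the normal fans of its summands \cite[Proposition 7.12]{ZIEGLER} and dilation does not change a normal fan, the normal fan of $cK$ is again $\Sigma$; hence $cD_{K}$ is an \emph{ample} divisor on $Y$, and for $c\gg 0$ it is very ample \cite[\S 6.1]{COX_TORIC}. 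Note already that, without any dilation, $\phi$ is a finite morphism: it is proper (as $Y$ is complete), and on any fibre $F$ the line bundle $\OO_{Y}(D_{K})|_{F}$ is at once ample and trivial, forcing $\dim F=0$.

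The crux is to identify $\varphi\circ\phi$, where $\varphi$ is the Segre embedding, with the morphism attached to the \emph{complete} linear system of $cD_{K}$. In general $\varphi\circ\phi$ is given by the image $W$ of the multiplication map $\bigotimes_{i}H^{0}\!\big(Y,\OO_{Y}(cD_{K_{i}})\big)\longrightarrow H^{0}\!\big(Y,\OO_{Y}(cD_{K})\big)$; in toric terms the target has the monomial basis indexed by $cK\cap M$, while $W$ is spanned by the monomials $\chi^{u}$ with $u$ a sum of lattice points, one taken from each $cK_{i}$. Thus $W=H^{0}(Y,\OO_{Y}(cD_{K}))$ exactly when every lattice point of $cK$ admits such a decomposition, and for $c$ sufficiently large this holds (this is the one place the dilation is genuinely used; it is a normal-generation statement, cf.\ the discussion of projective normality for toric varieties in \cite[\S 2.2, \S 6.1]{COX_TORIC}). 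Granting this, for $c\gg 0$ the composite $\varphi\circ\phi$ is the morphism defined by the complete linear system of the very ample divisor $cD_{K}$, hence a closed immersion; since $\varphi$ is itself a closed immersion, $\phi$ is a closed immersion, i.e.\ an embedding.

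The main obstacle is therefore the combinatorial statement used in the second step: that for $c\gg 0$ one has $(cK)\cap M=\sum_{i}\big(cK_{i}\cap M\big)$ (pointwise sum of lattice-point sets), equivalently the surjectivity of the multiplication-of-sections map above, equivalently the normality of the toric image $X=\phi(Y)$. An alternative packaging avoids the complete-system language: having already recorded that $\phi$ is finite, one checks that after dilation the torus part $\phi|_{T}$ is injective, so $\phi$ is birational onto its image, and then invokes that a finite birational morphism onto a normal variety is an isomorphism onto its image; but the normality of $X$ again reduces to the same lattice-point decomposition, so the dilation step cannot be bypassed.
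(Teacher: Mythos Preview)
Your proposal follows the same line as the paper --- the divisor $D_K=\sum_i D_{K_i}$ is ample because its polytope has normal fan exactly $\Sigma$, and one dilates to make it very ample --- but you are considerably more careful about a point the paper's two-sentence argument passes over in silence. The paper simply writes ``very ample divisor, hence an embedding''; you correctly observe that $\varphi\circ\phi$ is a priori defined only by the image $W$ of the multiplication map $\bigotimes_i H^0(Y,\OO_Y(cD_{K_i}))\to H^0(Y,\OO_Y(cD_K))$, which need not be the complete linear system, and that very ampleness of $cD_K$ alone does not force a proper sublinear system to embed $Y$. Your resolution via surjectivity of this multiplication map for $c\gg 0$ is the right repair.

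One caution on the reference: \cite[\S 2.2, \S 6.1]{COX_TORIC} treats projective normality for a \emph{single} polytope, not the mixed decomposition $(cK)\cap M=\sum_i\bigl((cK_i)\cap M\bigr)$ that you actually need. A clean justification goes through the Cayley polytope $C=\conv\bigl(\bigcup_i K_i\times\{e_i\}\bigr)\subset\mathbb{R}^d\times\mathbb{R}^p$: the standard normality bound (for a lattice polytope $P$ of dimension $n$ one has $(kP)\cap M=\underbrace{(P\cap M)+\cdots+(P\cap M)}_{k}$ once $k\ge n-1$) applied to $C$ with $k=pc$ yields exactly the decomposition you want at height $(c,\ldots,c)$. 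Your finiteness observation and the ``finite birational onto a normal image'' alternative are both sound; as you note, the normality of the image again reduces to the same lattice-point statement, so the dilation is genuinely needed either way.
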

\begin{proof}
	By construction the normal fan of $K_{[p]}=\sum_{i=1}^p K_i$ is $\Sigma$, so the corresponding divisor is ample \cite[Theorem 6.1.14]{COX_TORIC}. By replacing the list of polytopes by large enough scalings we obtain a very ample divisor, hence an embedding.
\end{proof}

\begin{proof}[Proof of \autoref{thm_mixed}]
	By using \autoref{lem_reduction}, we can assume that each $K_i$ is a polytope. 
	Additionally, we can reduce to the case where each $K_i$ is a lattice polytope since any polytope can be approximated by lattice polytopes (see \cite[Page 120]{FULTON}). 
	Let $K=K_1+\cdots+K_p$ and let $Y$ be the  toric projective variety associated to the normal fan of $K$. Each lattice polytope $K_i$ induces a basepoint free divisor $D_i$ on $Y$. As explained in \cite[Eq. (2), Page 116]{FULTON}, the fundamental connection between mixed volumes and intersection products is  given by the following equation
	\begin{equation}\label{eq_mixed-intersection}
		V(\textbf{K};\bn)=\left(D_1^{n_1}\cdots D_p^{n_p}\right)/d!,
	\end{equation}	
	where the numerator is the intersection product of the divisors in $Y$. Notice that positivity of mixed volumes  is unchanged by scaling so whenever needed we can scale each polytope.

	By \autoref{lem_toric-aux-2} we have an embedding $\phi: Y\rightarrow \prod_{i=1}^p \PP_{\kk}^{m_i}$ such that the pullback of each $H_i\in A^*\left(\prod_{i=1}^p \PP_{\kk}^{m_i}\right)$ is $D_i$. 
	By using the projection formula \cite[Proposition 2.5(c)]{FULTON_INTERSECTION_THEORY}, we can consider the product $\left(H_1^{n_1}\cdots H_p^{n_p}\right)/d!$ instead of the one in \autoref{eq_mixed-intersection}.
	From the fact that $Y$ is irreducible we are now in the \autoref{setup_initial}. So, the result follows by \autoref{rem_chow_ring},  \autoref{thmA}, and \autoref{lem_toric-aux-1}.
\end{proof}



\section*{Acknowledgments}
We thank the reviewer for his/her suggestions for the improvement of this work.
We would like to thank  Chris Eur, Maria Gillespie, June Huh, David Speyer, and Mauricio Velasco for
useful conversations. We are also grateful with Frank Sottile for useful comments on an earlier version (in particular for the simplification of the statement of \autoref{prop_sottile}). Special thanks to  Brian Osserman for many insightful conversations and encouragements. 
The computer algebra system \texttt{Macaulay2} \cite{MACAULAY2} was of great help to compute several examples in the preparation of this paper.

\bibliographystyle{elsarticle-num} 
\bibliography{references}

\end{document}